\numberwithin{equation}{section} 
\newtheorem{pro}{Proposition}[section]
\newtheorem{lem}[pro]{Lemma}
\newtheorem{cor}[pro]{Corollary} 
\newtheorem{theo}[pro]{Theorem}
\newtheorem*{theoetoile}{Theorem} % théorème non numéroté
\newtheorem*{conjetoile}{Conjectures} % conjecture non numérotée
\theoremstyle{definition}
\newtheorem{defi}[pro]{Definition}
\newtheorem{exem}[pro]{Example}
\newtheorem{rem}[pro]{Remark} 
\newcommand{\Q}{\mathbb{Q}}
\newcommand{\Z}{\mathbb{Z}}
\newcommand{\F}{\mathcal{F}}
\newcommand{\point}{\mathfrak{p}} 
\newcommand{\q}{\mathfrak{q}}
\newcommand{\D}{\mathcal{D}}
\newcommand{\m}{\mathfrak{m}}
\newcommand{\ia}{\mathfrak{a}}
\newcommand{\ig}{\mathfrak{g}}
\newcommand{\ib}{\mathfrak{b}}
\newcommand{\ZC}{\mathcal{Z}}
\newcommand{\SD}{\mathcal{SD}}
\newcommand{\SDC}{\mathcal{SDC}}
\newcommand{\Ser}{\mathcal{S}}
\newcommand{\Ker}{{\operatorname{Ker}}}
\newcommand{\coker}{{\operatorname{Coker}}}
\renewcommand{\Im}{{\operatorname{Im}}}
\newcommand{\reg}{{\operatorname{reg}}}
\newcommand{\indeg}{{\operatorname{indeg}}}
\newcommand{\ennd}{{\operatorname{end}}}
\newcommand{\hei}{{\operatorname{ht}}}
\newcommand{\depth}{{\operatorname{depth}}}
\newcommand{\Hom}{{\operatorname{Hom}}}
\newcommand{\Homgr}{{\operatorname{Homgr}}}
\newcommand{\Ext}{{\operatorname{Ext}}}
\newcommand{\codim}{{\operatorname{codim}}}
\newcommand{\Spec}{{\operatorname{Spec}}}
\newcommand{\Sym}{{\operatorname{Sym}}}
\newcommand{\length}{{\operatorname{length}}}
\newcommand{\Tot}{{\operatorname{Tot}}}
\newcommand{\Tor}{{\operatorname{Tor}}}
\newcommand{\ann}{{\operatorname{ann}}}
\newcommand{\hor}{{\operatorname{hor}}}
\newcommand{\ver}{{\operatorname{ver}}}
\def\ff{{\bf f}} 
\def\aa{{\bf a}}
\def\xx{{\bf x}}
\def\TT{{\bf T}}
\title{Cohen-Macaulayness and canonical module of residual intersections}
\author{Marc Chardin}
\address{Institut de Math\'{e}matiques de Jussieu. UPMC,  4 place Jussieu, 75005 Paris, France}
\email{marc.chardin@imj-prg.fr}
\urladdr{http://webusers.imj-prg.fr/~marc.chardin/}
\author{Jos\'{e} Na\'{e}liton}
\address{Departamento de Matem\'{a}tica, CCEN, Campus I--sn--Cidade Universit\'{a}ria, Universidade Federal de Para\'{i}ba, 58051-090 Jo\~{a}o Pessoa, Brazil} 
\email{jnaeliton@yahoo.com.br}
\author{Quang Hoa Tran}
\address{University of Education, Hue University,  34 Le Loi St., Hue City, Vietnam
\& Institut de Math\'{e}matiques de Jussieu. UPMC, 4 place Jussieu, 75005 Paris, France}
\email{quang-hoa.tran@imj-prg.fr}
\urladdr{http://webusers.imj-prg.fr/~quang-hoa.tran/}
\date{\today}
\begin{document}
\maketitle

\begin{abstract} 
We show the Cohen-Macaulayness and describe  the canonical module of residual intersections $J=\ia\colon_R I$ in a Cohen-Macaulay local ring $R$, under sliding depth type hypotheses. For this purpose, we construct and study,  using a recent  article of Hassanzadeh and the second named author \cite{HN16}, a family of complexes that contains important informations on a residual intersection and its canonical module.  We also determine  several invariants of residual intersections as the graded canonical module, the Hilbert series, the Castelnuovo-Mumford regularity and the type. Finally, whenever $I$ is strongly Cohen-Macaulay, we show duality results for residual intersections that are closely connected to results by Eisenbud and Ulrich \cite{EU16}. It establishes some tight relations between the Hilbert series of some symmetric powers of $I/\ia$. We also provide closed formulas for the types and for the Bass numbers of some symmetric powers of $I/\ia.$ 
\end{abstract}

{\small Keyword: \emph{Residual intersection, sliding depth, strongly Cohen-Macaulay, approximation complex, perfect pairing.}}
% \tableofcontents
\tableofcontents 
\section{Introduction}

The concept of residual intersection was introduced by Artin and Nagata in \cite{AN72}, as a generalization of linkage; it is more ubiquitous, but also harder to understand. Geometrically, let $X$ and $Y$ be two irreducible closed subschemes of a scheme $Z$ with $\codim_Z(X)\leq \codim_Z(Y)=s$ and $Y\nsubseteq X,$ then $Y$ is called a residual intersection of $X$ if the number of equations needed to define $X\cup Y$ as a subscheme of $Z$ is the smallest possible, i.e. $s$. For a ring $R$ and a finitely generated $R$-module $M$, let $\mu_R(M)$ denotes the minimum number of generators of $M$.

The precise definition of a residual intersection is the following.

\begin{defi}
Let $R$ be a Noetherian ring, $I$ be an ideal of height $g$ and $s\geq g$ be an integer.
\begin{enumerate}
\item An \textit{$s$-residual intersection} of $I$ is a proper ideal $J$ of $R$ such that $\hei(J)\geq s$ and $J=(\ia \colon_R I)$ for some ideal $\ia\subset I$ which is generated by $s$ elements.
\item An \textit{arithmetic $s$-residual intersection }of $I$ is an $s$-residual intersection $J$ of $I$ such that $\mu_{R_\point}((I/\ia)_\point)\leq1$ for all prime ideal $\point$ with $\hei(\point)\leq s$.
 \item A \textit{geometric $s$-residual intersection} of $I$ is an $s$-residual intersection $J$ of $I$ such that $\hei(I+J)\geq s+1.$
\end{enumerate}
\end{defi}  

Notice that an $s$-residual intersection is a direct link if $I$ is unmixed and $s =\hei(I).$ Also any geometric $s$-residual intersection is arithmetic.

\smallskip
The theory of residual intersections is a center of interest since the 80's, after Huneke repaired in \cite{Hu83} an argument of Artin and Nagata  in \cite{AN72}, introducing the notion of strongly Cohen-Macaulay ideal: an ideal such that all its Koszul homlogy is Cohen-Macaulay. The notion of strong Cohen-Macaulayness is stable under even linkage, in particular ideals linked to a complete intersection satisfy this property.

\smallskip
In \cite{Hu83} Huneke showed that if $R$ is a Cohen-Macaulay local ring, $J$ is a  $s$-residual intersection of a  strongly Cohen-Macaulay ideal $I$ of $R$ satisfying $G_s,$ then $R/J$ is Cohen-Macaulay of codimension $s.$ Following \cite{AN72}, one says that $I$ satisfies $G_s$ if the number of generators $\mu_{R_\point}(I_\point)$ is at most $\dim(R_\point)$ for all prime ideals $\point$ with $I\subset \point$ and $\dim(R_\point)\leq s-1$ and that $I$ satisfies $G_\infty$ if $I$ satisfies $G_s$ for all $s.$ Later, Herzog, Vasconcelos, and Villarreal in \cite{HVV85} replaced the assumption strong Cohen-Macaulayness by the weaker sliding depth condition, for geometric residuals, but they also showed that this assumption cannot be weakened any further. On the other hand, Huneke and Ulrich proved in \cite{HU88} that the condition $G_s$ is superfluous for ideals in the linkage class of a complete intersection, and more precisely:
\begin{theoetoile}\cite{HU88}
Let  $R$ be a Gorenstein local ring and $I$ be an ideal of height $g$ that is evenly linked to a strongly Cohen-Macaulay ideal satisfying $G_\infty.$ If  $J=\ia\colon_R I$ is an $s$-residual intersection of $I,$ then $R/J$ is Cohen-Macaulay of codimension  $s$ and the canonical module of $R/J$ is the $(s-g+1)$-th symmetric power  of $I/\ia.$
\end{theoetoile}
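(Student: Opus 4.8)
The plan is to resolve $R/J$ by an explicit finite complex and then to obtain both the Cohen-Macaulayness and the canonical module by dualizing that complex. Fix generators $\ia=(a_1,\dots,a_s)\subseteq I=(f_1,\dots,f_r)$, write $a_j=\sum_{i}c_{ij}f_i$, and in the polynomial extension $\Ser=R[T_1,\dots,T_r]$ form the linear forms $\gamma_j=\sum_i c_{ij}T_i$. The approximation ($\ZC$-)complex of $I$, built from the Koszul cycles of $f_1,\dots,f_r$, together with the Koszul complex $K_\bullet(\gamma_1,\dots,\gamma_s;\Ser)$, assembles --- after the truncation of the associated double complex introduced by Hassanzadeh and refined in \cite{HN16} --- into a complex $\ZC^+_\bullet$ of $\Ser$-modules whose zeroth homology is (a twist of) $R/J$. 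The first task is to prove that $\ZC^+_\bullet$ is acyclic, so that $R/J$ is perfect of grade $s$ over $R$ and hence Cohen-Macaulay of codimension $s$. The input that makes acyclicity work is a sliding depth condition on $I$, which I get from the hypothesis for free: strong Cohen-Macaulayness is preserved under even linkage (Huneke), so $I$ itself is strongly Cohen-Macaulay and in particular satisfies sliding depth.

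The genuine obstacle is the absence of $G_s$ for $I$, which the standard acyclicity criteria for $\ZC^+_\bullet$ demand and which is exactly what Huneke and Ulrich dispense with. Here I would reproduce their universal (generic) linkage device: after a faithfully flat polynomial base change, replace the linking data in the linkage chain joining $I$ to the strongly Cohen-Macaulay ideal $I_0$ that satisfies $G_\infty$ by generic ones, so that some ideal in the new chain satisfies $G_\infty$; over this generic model sliding depth together with $G_\infty$ suffices, so there the residual intersection is Cohen-Macaulay of the expected codimension and its canonical module is the predicted symmetric power. One then descends to the original data by specialization, using flatness of the extension to transport Cohen-Macaulayness and base-change compatibility of $\Ext$ and of symmetric powers to transport the canonical module; a point that has to be verified along the way is that the heights of $J$ and of $I+J$ are preserved under the specialization, which follows from the genericity of the $c_{ij}$ by a dimension count.

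For the canonical module, once $R/J$ is perfect of grade $s$ over the Gorenstein ring $R$, $\omega_{R/J}\cong\Ext^s_R(R/J,R)$, which I would compute by dualizing $\ZC^+_\bullet$ (equivalently, the finite free $R$-resolution of $R/J$ it produces). The self-duality of the Koszul complex $K_\bullet(\gamma_1,\dots,\gamma_s;\Ser)$ and the acyclicity of the approximation complex of the strongly Cohen-Macaulay ideal $I$ make the dual complex again expressible through the Koszul data of $f_1,\dots,f_r$, but with the internal grading in the $T$-variables shifted by $s-g+1$; one then identifies the cokernel of the dual with $\Sym^{s-g+1}_{R/\ia}(I/\ia)$, compatibly with the natural $R/J$-module structure on $I/\ia$. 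This identification is the technical heart of this half, and it is where strong Cohen-Macaulayness is used most essentially: it guarantees that the approximation complex of $I$ is acyclic, hence that the $(s-g+1)$-st symmetric power of $I/\ia$ coincides with the corresponding component of the Rees algebra and is itself Cohen-Macaulay.

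I expect the principal difficulties to be, first, the descent from the generic model --- ensuring that all height and module-structure data specialize correctly --- and, second, the degree bookkeeping in the duality step, i.e. making sure that the shift produced by dualizing lands on the $(s-g+1)$-st symmetric power and on no other. Granting strong Cohen-Macaulayness and $G_\infty$ in the generic model, the acyclicity of $\ZC^+_\bullet$ and the vanishing of its off-diagonal homology are the comparatively routine ingredients.
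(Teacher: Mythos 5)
Your plan is genuinely different from the paper's, and it also rests on two misstatements that would derail it if carried out literally. On the route: the paper does not re-prove the Huneke--Ulrich theorem via generic linkage. It subsumes it in Theorems~\ref{Theorem4.5} and~\ref{Theorem4.8}, which establish Cohen--Macaulayness of $R/J$ under $\SD_1$ alone and $\omega_{R/J}\simeq\Sym_R^{s-g+1}(I/\ia)\otimes_R\omega$ under $\SD_2$, with no $G$-condition and no deformation whatsoever; since strong Cohen--Macaulayness is preserved by even linkage and implies every $\SD_k$, and $\Tor_1^R(R/I,\omega)=0$ when $R$ is Gorenstein, the cited statement is just the Gorenstein, SCM specialization. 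Your proposal is instead a hybrid: build the residual approximation complex, then defer to the Huneke--Ulrich generic deformation to supply $G_\infty$, then specialize. That is coherent as a sketch, but the deformation-and-descent step you punt on \emph{is} essentially the whole of \cite{HU88} (including exactly the bookkeeping of canonical modules under specialization that the introduction flags as delicate); the $\ZC_\bullet^+$ machinery would then be a decoration, not the engine.

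The two misstatements are concrete. First, you claim "the standard acyclicity criteria for $\ZC^+_\bullet$ demand" $G_s$ and introduce the generic model to get it. Not so: Theorem~\ref{Theorem3.9} (\cite[Theorem~2.6]{HN16}) gives acyclicity of $_k\ZC_\bullet^+$ under sliding depth alone, which you already have from SCM. The genuine obstruction is that $H_0(_0\ZC_\bullet^+)=R/K$ for the \emph{disguised} residual intersection $K$, and a priori one only knows $K\subseteq J$ with $\sqrt K=\sqrt J$; prior to this paper $K=J$ was known only for arithmetic residuals. Your write-up identifies $H_0$ with "(a twist of) $R/J$", which assumes away precisely the step that Theorem~\ref{Theorem4.5} proves. (Deforming to a $G_\infty$ model does make the residual geometric, hence arithmetic, so $K=J$ there by \cite{Ha12}---that is the correct reason to deform, but you should say so.) Second, the components of $\ZC_\bullet^+$ are direct sums of Koszul cycles, not free $R$-modules, so acyclicity does not make $R/J$ "perfect of grade $s$", and one cannot compute $\Ext_R^s(R/J,R)$ by dualizing $\ZC_\bullet^+$ term-by-term. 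The paper's substitute is the Koszul-cycle duality $\omega_{Z_p}\simeq Z_{r-1-p}^{\omega}$ of Proposition~\ref{Proposition2.4}, fed into the local-cohomology spectral sequence of $C_\m^\bullet(_0\ZC_\bullet^+)$ in Proposition~\ref{Proposition4.4}: this produces an epimorphism from $H_0$ of the $\omega$-coefficient complex in internal degree $s-1$ onto $\omega_{R/K}$; comparing annihilators via Proposition~\ref{Proposition3.5} then gives $J\subseteq K$, hence $K=J$, and under $\SD_2$ the epimorphism is an isomorphism whose source is identified with $\Sym_R^{s-g+1}(I/\ia)\otimes_R\omega$ by Proposition~\ref{Proposition3.3}.
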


Let us notice that, in the proof of this statement, it is important to keep track of the canonical module of the residual along the deformation argument that they are using. 

\smallskip
A natural question is then to know if the $G_s$ assumption is at all needed to assert that residuals of ideals that are strongly Cohen-Macaulay, or satisfy the weaker sliding depth condition, are always Cohen-Macaulay, and to describe the canonical module of the residual. In this direction, Hassanzadeh and the second named author remarked in \cite{HN16} that the following long-standing assertions were, explicitely or implicitly, conjectured:
\begin{conjetoile} \cite{HU88, Ul92,CEU01}
Let $R$ be a Cohen-Macaulay local (or $^\ast\!$local) ring and $I$ is strongly Cohen-Macaulay, or even just satisfy sliding depth. Then, for any $s$-residual intersection $J=(\ia\colon_R I)$ of $I,$ 
\begin{enumerate}
\item [\rm (1)] $R/J$ is Cohen-Macaulay.
\item [\rm (2)] The canonical module of $R/J$ is the $(s-g+1)$-th symmetric power of $I/\ia,$ if $R$ is Gorenstein, with $g=\hei(I)\leq s.$
\item [\rm (3)] $\ia$ is minimally generated by $s$ elements.
\item [\rm (4)] $J$ is unmixed.
\item [\rm (5)] When $R$ is positively graded over a field, the Hilbert series of $R/J$ depends only upon $I$ and the degrees of the generators of $\ia.$
\end{enumerate}
\end{conjetoile}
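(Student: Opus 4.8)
The plan is to trade the quotient $R/J$ for an explicit finite complex of $R$-modules whose zeroth homology is $R/J$, or at least $R/K$ for the \emph{disguised residual intersection} $K\subseteq J$ of \cite{HN16}, and then to read every invariant in the statement off that complex. Fix generators $\ia=(a_1,\dots,a_s)$ and $I=(f_1,\dots,f_r)$, write $a_i=\sum_j c_{ij}f_j$, and over the polynomial extension $S=R[T_1,\dots,T_s]$ set $g_j=\sum_i c_{ij}T_i$. Following the construction of \cite{HN16}, one starts from the approximation ($\ZC$-)complex of $I$ over $S$, whose terms are the Koszul cycles $Z_k(\underline f)\otimes_R S$ and whose differential is $\underline g$-linear, forms its total complex against the Koszul complex on $\underline T$, and retains the part supported away from $V(\underline T)$ (equivalently, applies a truncated local-cohomology functor in the $T$-variables). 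This yields a complex $\ZC^+_\bullet$ of finitely generated $R$-modules, of length $s$, with $H_0(\ZC^+_\bullet)=R/K$. More generally one sets up a whole \emph{family} of such complexes, feeding also a module $M$ (typically $M=R$ or $M=\omega_R$) into the construction, since duality will relate different members of the family.

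\emph{Cohen-Macaulayness and unmixedness.} The first step is to prove that $\ZC^+_\bullet$ is acyclic, and this is exactly where sliding depth enters: under sliding depth the terms $Z_k(\underline f)$ of the approximation complex satisfy the depth bounds needed to apply the Peskine--Szpiro acyclicity lemma degreewise to the underlying double complex, while strong Cohen-Macaulayness (or the intermediate conditions $\mathrm{SD}_k$) gives sharper estimates and, later, sharper conclusions. Once $\ZC^+_\bullet$ is acyclic, its length $s$ forces $\depth(R/K)\ge\depth R-s$, and together with the height estimates for $\ia\colon_R I$ and a local inspection of $\ZC^+_\bullet$ this gives $\codim(R/K)=s$; hence $R/K$ is Cohen-Macaulay of codimension $s$, and the same local estimates show $R/K$ is unmixed. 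To pass from $K$ to $J$ one must show $K=J$: this holds when the residual is arithmetic (in particular geometric, in particular when $I$ satisfies $G_s$), because then a Fitting-ideal computation using $\mu_{R_\point}((I/\ia)_\point)\le1$ in codimension $\le s$ identifies the disguised residual with $\ia\colon_R I$. This gives (1) and (4); and (3), $\mu_R(\ia)=s$, follows from the same complex, since a shorter generating set of $\ia$ would shorten $\ZC^+_\bullet$ and force $\codim(R/J)\le s-1$.

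\emph{Canonical module, Hilbert series, regularity, type.} Since $R/K$ is Cohen-Macaulay of codimension $s$, one has $\omega_{R/K}=\Ext^s_R(R/K,\omega_R)$, which is the cohomology at the end of the dual complex $\Hom_R({}_{\omega_R}\ZC^+_\bullet,\omega_R)$, i.e. the cokernel of the transpose of its last differential. Here the family construction is essential: dualizing $\ZC^+_\bullet$ produces another complex of the same shape, whose top terms involve the highest Koszul cycle $Z_r(\underline f)$ together with symmetric powers of $I/\ia$, and identifying the cokernel amounts to recognizing $\Sym_{s-g+1}(I/\ia)$ — for which strong Cohen-Macaulayness guarantees that $\Sym_t(I/\ia)$ has the expected depth, so no spurious torsion appears. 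Carrying the internal gradings through the construction (with $\deg T_i=\deg a_i$) then yields the graded canonical module, hence the $a$-invariant and the Castelnuovo--Mumford regularity of $R/J$; the type of $R/J$ is $\mu_R(\omega_{R/J})$, read from the last map; and in the positively graded case every term of $\ZC^+_\bullet$ is built functorially from $I$, the $f_j$ and the $T_i$, so the Euler characteristic of $\ZC^+_\bullet$ expresses the Hilbert series of $R/J$ in terms depending only on $I$ and $\deg a_1,\dots,\deg a_s$, which is (5). Finally, when $I$ is strongly Cohen-Macaulay, comparing the dualized complex with $\ZC^+_\bullet$ built at different truncation levels produces perfect pairings between symmetric powers $\Sym_t(I/\ia)$ in complementary degrees, recovering and refining the Eisenbud--Ulrich duality and giving closed formulas for their types and Bass numbers.

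\emph{Main obstacle.} The technical heart is proving acyclicity of $\ZC^+_\bullet$ under the weakest possible sliding depth hypothesis: $\ZC^+_\bullet$ is not a free resolution, so the acyclicity lemma must be applied with care to the spectral sequence of the underlying double complex, and the depth bookkeeping — exactly how $\mathrm{SD}_k$ or strong Cohen-Macaulayness feeds into each conclusion — has to be tracked precisely. The second difficulty is the one already flagged in the introduction: for the canonical module one cannot merely invoke a deformation argument, one must keep explicit control, through every dualization and regrading, of the identification $\omega_{R/J}\cong\Sym_{s-g+1}(I/\ia)$, and it is this compatibility — not just the existence of some isomorphism — that makes the symmetric-power description and the subsequent duality statements go through. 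The gap between $K$ and $J$ in the genuinely non-arithmetic case remains, and this approach does not close it.
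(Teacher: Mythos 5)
Your proposal correctly reproduces the scaffolding of the paper — the family of complexes ${}_k^M\ZC_\bullet^+$ built from approximation complexes and Koszul complexes, acyclicity via Peskine–Szpiro under sliding-depth hypotheses, $H_0({}_0\ZC_\bullet^+)=R/K$, and reading canonical module, Hilbert series, regularity and type off the resolution. But you stop exactly where the paper's real contribution lies. You write that $K=J$ is established only in the arithmetic case and that ``the gap between $K$ and $J$ in the genuinely non-arithmetic case remains, and this approach does not close it.'' That gap is not a defect of the general strategy — it is the central point, and it \emph{can} be closed within the strategy, by a short but crucial detour through the canonical module. Since your proposal leaves this open, it does not in fact prove items (1), (3), (4), (5) of the conjecture: everything you derive about $R/J$ is, strictly speaking, about $R/K$, and without $K=J$ none of it transfers.

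The idea the paper uses (Proposition~\ref{Proposition4.4} and Theorem~\ref{Theorem4.5}) is the following. Reduce to height $2$ by killing a regular sequence inside $\ia$. Under $\SD_1$, comparing the spectral sequences of $C_\m^\bullet({}_0\ZC_\bullet^+)$ and using the graded self-duality of the construction (Lemma~\ref{Lemma4.3}(iv), resting on the Koszul-cycle duality $H_\m^d(Z_p)\simeq(Z^\omega_{r-1-p})^\vee$ of Proposition~\ref{Proposition2.4}) produces a \emph{surjection} $\phi: H_0({}_{s-1}^{\ \omega}\ZC_\bullet^+)\twoheadrightarrow \omega_{R/K}$. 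Independently, Proposition~\ref{Proposition3.5} shows that $J=\ia\colon_R I$ annihilates $H_0({}_{k}^{M}\ZC_\bullet^+)$ for every $k\ge 1$ and every $M$, because there is always an epimorphism $\Sym_R^k(I/\ia)\otimes_R M\twoheadrightarrow H_0({}_{k}^{M}\ZC_\bullet^+)$ and $J$ kills $\Sym_R^k(I/\ia)$. Combining: $J\subseteq\ann_R(\omega_{R/K})$, and since $R/K$ is already known to be Cohen–Macaulay, $\ann_R(\omega_{R/K})=K$. Thus $J\subseteq K$, and with Hassanzadeh's $K\subseteq J$ one gets $K=J$ under $\SD_1$, with no arithmetic or $G_s$ hypothesis at all. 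This is the missing step in your proposal; everything else you describe (acyclicity, duality for symmetric powers, graded canonical module and regularity, Hilbert series) then goes through essentially as you outline, once $R/J=R/K$ is secured.
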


The first conjecture was shown by Hassanzadeh \cite{Ha12} for arithmetic residual intersections, thus in particular for geometric residual intersections, under the sliding depth condition. In the recent article \cite{HN16}, Hassanzadeh and the second named author proved that the second and fifth conjectures hold for the arithmetic residual intersections of strongly Cohen-Macaulay ideals and that  the third and fourth conjectures are true if $\depth(R/I)\geq \dim(R)-s$ and $I$ satisfies the sliding depth condition.  

\smallskip
In this text we will complete the picture, by showing that the first and fifth conjectures hold whenever  $I$ satisfies $\SD_1$ and that the second conjecture is true if $I$ satisfies $\SD_2$ -- recall that an ideal $I=(\ff)=(f_1,\ldots,f_r)$ of height $g$ in a Noetherian local ring $R$ of dimension $d$ satisfies $\SD_k\ (k\geq 0)$ if $\depth(H_i(\ff;R))\geq \min\{d-g,d-r+i+k\}$ for all $i\geq 0;$  note that $\SD_0$ is the sliding depth condition and $\SD_\infty,$ that is $\SD_k$ for all $k\geq 0,$ is strong Cohen-Macaulayness.

\smallskip
In particular all items in the conjecture holds for strongly Cohen-Macaulay ideals. The following puts together part of these results: 

\begin{theoetoile}[Theorems~\ref{Theorem4.5}, \ref{Theorem4.8} and~\ref{Theorem6.2}]
Let $(R,\m)$ be a Cohen-Macaulay local ring with canonical module $\omega$. 
Assume that $J=(\ia\colon_R I)$ is an $s$-residual intersection of $I$ with $\ia\subset I$ and $\hei(I)=g\leq s=\mu_R (\ia )$. Then
\begin{enumerate}
\item [\rm (i)]  $R/J$ is Cohen-Macaulay of codimension $s$ if $I$ satisfies $\SD_1$.
\end{enumerate}
If furthermore $\Tor_1^R(R/I,\omega)=0$, then 
\begin{enumerate}
\item [\rm (ii)]  $\omega_{R/J}\simeq \Sym_R^{s-g+1}(I/\ia)\otimes_R \omega$, provided $I$ satisfies $\SD_2,$
\item [\rm (iii)]  $\omega_{\Sym_R^k(I/\ia)}\simeq \Sym_R^{s-g+1-k}(I/\ia)\otimes_R \omega$ for $1\leq k\leq s-g,$ provided $I$ is strongly Cohen-Macaulay.
\end{enumerate}
\end{theoetoile}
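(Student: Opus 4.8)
The plan is to build everything from a single family of complexes, following the strategy sketched in the abstract. The starting point is the observation, due to Hassanzadeh--Naéliton, that residual intersections are governed by the Koszul--\v Cech ``residual approximation complexes''; so first I would recall that construction and introduce, for a presentation $I=(\ff)=(f_1,\dots,f_r)$ with $\ia=(a_1,\dots,a_s)$ and $a_j=\sum_i c_{ij}f_i$, a family of complexes $\{{}_k\ZC^{\bullet}_{\bullet}\}$ obtained by tensoring the approximation complex of $\ff$ with the Koszul complex on the $a_j$'s (equivalently, the generalised Eagon--Northcott-type complexes attached to the matrix $(c_{ij})$) and taking the associated total complex together with its \v Cech-twisted variant. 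The key structural input is that under $\SD_k$ the homology of the approximation complex $\mathcal{Z}_{\bullet}$ has controlled depth, so a spectral sequence argument shows that the relevant total complex is acyclic in the appropriate range; this is where one must be careful about which $\SD_k$ is needed, and it is the technical heart.

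Next I would prove part (i). The idea is that for $k$ large the complex ${}_k\ZC^{\bullet}_{\bullet}$ resolves (a twist of) $R/J$ up to the cohomological degree that detects Cohen--Macaulayness. Using the depth estimates coming from $\SD_1$ one feeds the two spectral sequences of the relevant double complex against each other: one of them degenerates to give $H^i_{\m}(R/J)$, the other is built from Koszul homology modules $H_i(\ff;R)$ whose depth is bounded below by $\SD_1$, and comparing them forces $H^i_{\m}(R/J)=0$ for $i<s$. Since $\hei(J)\ge s$ and $R$ is Cohen--Macaulay of dimension $d$, this yields $\depth(R/J)\ge d-s\ge \dim(R/J)$, hence $R/J$ is Cohen--Macaulay of codimension exactly $s$. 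Here the hypothesis $\Tor_1^R(R/I,\omega)=0$ is not yet needed; it enters only to identify cohomology with the canonical module of $R$ rather than of $R/I$.

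For parts (ii) and (iii) I would dualise. Once $R/J$ is Cohen--Macaulay, $\omega_{R/J}=\Ext^s_R(R/J,\omega)$, and this Ext can be computed from the tail of the acyclic complex built above. The point of taking $I$ to satisfy $\SD_2$ (rather than $\SD_1$) is that one extra step of depth is exactly what is needed so that the dual complex remains acyclic one spot further, so that $\Ext^s_R(R/J,\omega)$ is read off as the cokernel of an explicit map, which one then identifies with $\Sym^{s-g+1}_R(I/\ia)\otimes_R\omega$; the vanishing $\Tor_1^R(R/I,\omega)=0$ is what lets one commute the symmetric-power functor past $\otimes\omega$ in this identification. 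For (iii), strong Cohen--Macaulayness ($\SD_\infty$) makes the \emph{entire} family of complexes ${}_k\ZC^{\bullet}_{\bullet}$ acyclic for all $k$; one then shows each $\Sym^k_R(I/\ia)$ (for $1\le k\le s-g$) is itself Cohen--Macaulay and that the family of complexes sets up a self-duality among them, shifting $k\mapsto s-g+1-k$, which gives $\omega_{\Sym^k_R(I/\ia)}\simeq \Sym^{s-g+1-k}_R(I/\ia)\otimes_R\omega$. I expect the main obstacle to be the acyclicity/depth bookkeeping in the spectral sequences — pinning down precisely that $\SD_1$ suffices for (i) and $\SD_2$ for (ii), and keeping the canonical module correctly twisted through the duality so that the symmetric powers appear with the right index.
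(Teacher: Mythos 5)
Your outline captures the ambient machinery (approximation complexes tensored with the Koszul complex on $\ia$, \v Cech twists, two-spectral-sequence comparisons), but there is a genuine gap at the very first step of part~(i), and it propagates through the rest. You write that ``the complex ${}_k\ZC_\bullet^+$ resolves (a twist of) $R/J$,'' and then compare spectral sequences to kill $H^i_\m(R/J)$. But this is not available: under $\SD$, the complex ${}_0\ZC_\bullet^+$ is acyclic and resolves $R/K$, where $K$ is the \emph{disguised} $s$-residual intersection, with only $K\subset J$ and $\sqrt K=\sqrt J$ known in general (and $K=J$ previously known only in the arithmetic case). The Cohen--Macaulayness of $R/K$ is not new; it is essentially Theorem~\ref{Theorem3.9}. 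The new content of Theorem~\ref{Theorem4.5} is precisely the equality $K=J$ under $\SD_1$, and your proposal does not address how to get it. The paper's key idea is orthogonal to a direct spectral-sequence vanishing argument on $R/J$: one first reduces to $\hei(I)=2$ by modding out a regular sequence inside $\ia$ (Remark~\ref{Remark3.8}, Proposition~\ref{Proposition4.1}); then one applies the vertical/horizontal spectral sequences of $C_\m^\bullet({}_0\ZC_\bullet^+)$ together with the duality of Lemma~\ref{Lemma4.3}(iv) to produce an epimorphism $\phi\colon H_0({}_{s-1}^{\quad\omega}\ZC_\bullet^+)\twoheadrightarrow\omega_{R/K}$ (Proposition~\ref{Proposition4.4}, using $\SD_1$ for the depth bounds on ${}_0\ZC_i^+$); finally, since $J$ annihilates $H_0({}_{s-1}^{\quad\omega}\ZC_\bullet^+)$ (Proposition~\ref{Proposition3.5}) and $\ann_R(\omega_{R/K})=K$ because $R/K$ is Cohen--Macaulay, one concludes $J\subset K$, hence $J=K$.

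The same omission affects parts (ii) and (iii). You say $\Ext^s_R(R/J,\omega)$ ``can be computed from the tail of the acyclic complex,'' but ${}_0\ZC_\bullet^+$ is not a free resolution (its terms are sums of Koszul cycles), so one cannot read off $\Ext$ directly; instead the paper dualises via $H^{d-s}_\m(R/K)\simeq{}^\infty\mathbf{E}_{\ver}^{-s,-d}\subset{}^2\mathbf{E}_{\ver}^{-s,-d}\simeq H_0({}_{s-1}^{\quad\omega}\ZC_\bullet^+)^\vee$, and the strengthening from $\SD_1$ to $\SD_2$ is precisely what upgrades the inclusion ${}^\infty\mathbf{E}\subset{}^2\mathbf{E}$ to an equality, so $\phi$ becomes an isomorphism. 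The identification of $H_0({}_{s-1}^{\quad\omega}\ZC_\bullet^+)$ with $\Sym_R^{s-g+1}(I/\ia)\otimes_R\omega$ then requires $\Tor_1^R(R/I,\omega)=0$ (Proposition~\ref{Proposition3.3}), not a commuting of functors as you suggest. For (iii), the broad idea of a self-duality shifting $k\mapsto s-g+1-k$ is correct, but it is again realised through the specific isomorphism $\omega_{H_0({}_k\ZC_\bullet^+)}\simeq H_0({}_{s-k-1}^{\qquad\omega}\ZC_\bullet^+)$ of Proposition~\ref{Proposition6.1}, proved in height $2$ and then lifted to general $g$ by the same regular-sequence reduction. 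In short: you have the right toolbox, but the load-bearing step -- showing the disguised residual equals the actual residual by controlling the annihilator of $\omega_{R/K}$ -- is absent, and without it the argument does not start.
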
 

Notice that $\Tor_1^R(R/I,\omega)=0$ if  $R$ is Gorenstein or $I$ has finite projective dimension.

\smallskip
A key ingredient of our proofs is a duality result between some of the first symmetric powers of $I/\ia$ together with a description of the canonical module of the residual as in items (ii) and (iii) above. This could be compared to recent results of Eisenbud and Ulrich that obtained similar dualities under slightly different hypotheses in \cite{EU16}. In their work, conditions on the local number of generators are needed and depth conditions are asked for some of the first powers of the ideal $I$, along the lines of \cite{Ul94}, and the duality occurs between powers $I^t/\ia I^{t-1}$ in place of symmetric powers $\Sym^t (I/\ia)$. Although their results and ours coincide in an important range of situations, like for geometric residuals of strongly Cohen-Macaulay ideals satisfying $G_s$, the domains of validity are quite distinct. We prove the following.

\begin{theoetoile}[Theorem~\ref{Theorem6.7}]
Let $(R,\m)$ be a Gorenstein local ring  and let  $\ia\subset I$ be two ideals of $R,$ with $\hei(I)=g.$ Suppose that $J=(\ia\colon_R I)$ is an $s$-residual intersection of $I.$ If $I$ is strongly Cohen-Macaulay, then $\omega_{R/J}\simeq \Sym_{R/J}^{s-g+1}(I/\ia)$ and for all $0\leq k\leq s-g+1$
\begin{enumerate}
\item[\em (i)] the $R/J$-module $\Sym_{R/J}^{k}(I/\ia)$ is faithful and Cohen-Macaulay,
\item[\em (ii)] the multiplication
\begin{displaymath}
\Sym_{R/J}^k(I/\ia)\otimes_{R/J} \Sym_{R/J}^{s-g+1-k}(I/\ia)\longrightarrow \Sym_{R/J}^{s-g+1}(I/\ia)
\end{displaymath}
is a perfect pairing, 
\item[\em (iii)] setting $A:=\Sym_{R/J}(I/\ia)$, the graded $R/J$-algebra
$$\overline{A}:=A/A_{>s-g+1}=\bigoplus_{i=0}^{s-g+1}\Sym_{R/J}^i(I/\ia) 
$$
is Gorenstein.
\end{enumerate}
\end{theoetoile}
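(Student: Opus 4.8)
The plan is to deduce this Gorenstein/self-duality statement from the earlier structural theorems, specialized to the Gorenstein case where $\omega = R$ and hence $\omega_{R/J} \simeq \Sym_R^{s-g+1}(I/\ia)$ by Theorem~\ref{Theorem4.8}(ii), noting that $\Tor_1^R(R/I,\omega)=\Tor_1^R(R/I,R)=0$ automatically and that strong Cohen-Macaulayness implies $\SD_2$. First I would record that, since $I$ is strongly Cohen-Macaulay, Theorem~\ref{Theorem4.8}(iii) gives $\omega_{\Sym_R^k(I/\ia)}\simeq \Sym_R^{s-g+1-k}(I/\ia)$ for $1\leq k\leq s-g$, while the cases $k=0$ and $k=s-g+1$ read $\omega_{R/J}\simeq \Sym_R^{s-g+1}(I/\ia)$ and $\omega_{\Sym_R^{s-g+1}(I/\ia)}\simeq R/J$; these already show each $\Sym_{R/J}^k(I/\ia)$ is a maximal Cohen-Macaulay $R/J$-module (it is an $(s-g+1-k)$-th power, hence a second syzygy-type dual of a CM module over the CM ring $R/J$), proving the Cohen-Macaulay part of (i). Faithfulness over $R/J$ follows because $\ann_{R/J}\Sym_{R/J}^k(I/\ia)$, when nonzero, would drop the dimension of the module below $\dim R/J = d-s$, contradicting that its canonical dual $\Sym_{R/J}^{s-g+1-k}(I/\ia)$ is a nonzero CM module of that same dimension; equivalently one checks faithfulness at the generic points of $R/J$, where (as in the arithmetic residual analysis of \cite{HN16}) $I/\ia$ is locally cyclic and generated by a nonzerodivisor on $R/J$, so all its symmetric powers are locally free of rank one there.

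Next, for (ii) I would set up the multiplication map $m_k\colon \Sym_{R/J}^k(I/\ia)\otimes_{R/J}\Sym_{R/J}^{s-g+1-k}(I/\ia)\to \Sym_{R/J}^{s-g+1}(I/\ia)\simeq\omega_{R/J}$ and show it is a perfect pairing, i.e. that the induced map $\Sym_{R/J}^k(I/\ia)\to \Hom_{R/J}(\Sym_{R/J}^{s-g+1-k}(I/\ia),\omega_{R/J})=\omega_{\Sym_{R/J}^{s-g+1-k}(I/\ia)}$ is an isomorphism. By Theorem~\ref{Theorem4.8}(iii) the target is abstractly isomorphic to $\Sym_{R/J}^k(I/\ia)$, but one must check that \emph{this particular} comparison map is the isomorphism. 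The clean way is a localization-and-depth argument: both source and target are maximal Cohen-Macaulay $R/J$-modules, so the comparison map is an isomorphism if and only if it is an isomorphism after localizing at every height-$s$ prime of $R/J$ (a module map between MCM modules over a CM ring that is an isomorphism in codimension zero, and whose cokernel and kernel are then MCM hence zero, must be an isomorphism — more carefully, injectivity follows from the source being torsion-free/MCM and the map being generically injective, surjectivity follows by comparing to the canonical module and using that cokernel would be a lower-dimensional submodule of an MCM module). At a height-$s$ prime $\point$ of $R/J$ one is in the geometric/arithmetic situation: $(I/\ia)_\point$ is free of rank one over $(R/J)_\point$, the symmetric algebra is a polynomial ring in one variable truncated, and the pairing $R[t]_{\leq s-g+1}$ given by $t^k\otimes t^{s-g+1-k}\mapsto t^{s-g+1}$ is visibly perfect; hence $m_k$ is a perfect pairing globally.

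Finally, (iii) is essentially a reformulation: the truncated symmetric algebra $\overline A=\bigoplus_{i=0}^{s-g+1}\Sym_{R/J}^i(I/\ia)$ is a finite graded $R/J$-algebra, it is Cohen-Macaulay as an $R/J$-module (each graded piece is MCM by (i), and finitely many of them), its top piece $\overline A_{s-g+1}\simeq \omega_{R/J}$, and the multiplication pairings of (ii) say exactly that $\overline A$ is a "Poincaré duality algebra" over $R/J$ with socle in the top degree isomorphic to $\omega_{R/J}$; combined with $\depth \overline A = \dim R/J$ this gives $\omega_{\overline A}\simeq \overline A$ up to a degree shift, i.e. $\overline A$ is Gorenstein — one computes $\omega_{\overline A}=\Hom_{R/J}(\overline A,\omega_{R/J})$ whose $i$-th graded component is $\Hom_{R/J}(\Sym^{s-g+1-i},\omega_{R/J})\simeq \Sym^i$ by the perfect pairing, so $\omega_{\overline A}\simeq \overline A$ as graded $\overline A$-modules after the shift by $s-g+1$, which is the definition of a graded Gorenstein ring. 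The main obstacle I anticipate is the second step: establishing that the explicit multiplication map, and not merely some abstract isomorphism, realizes the duality of Theorem~\ref{Theorem4.8}(iii); this requires identifying the canonical-module isomorphisms produced by the complexes of Section~\ref{...} with the natural algebra structure, and the reduction to codimension $s$ via the MCM property is what makes this tractable rather than requiring a direct chase through those complexes.
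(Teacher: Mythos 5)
Your reduction of (iii) to (ii), and of (ii) to the assertion that the natural multiplication map $\psi\colon \Sym_{R/J}^k(I/\ia)\to\Hom_{R/J}(\Sym_{R/J}^{s-g+1-k}(I/\ia),\omega_{R/J})$ is an isomorphism, is sound, and you correctly flag the real obstacle: upgrading the abstract isomorphism of Theorem~\ref{Theorem6.2}(i) to the specific map $\psi$. But the resolution you propose — localize at the height-$s$ primes of $R$, where you claim $(I/\ia)_\point$ becomes free of rank one over $(R/J)_\point$ — has a genuine gap. That claim is precisely the definition of an \emph{arithmetic} residual intersection, and the theorem is stated (and is interesting) without that hypothesis. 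For a non-geometric, non-arithmetic residual, there are minimal primes $\point$ of $R/J$ (height $s$ in $R$) containing $I$ at which $(I/\ia)_\point$ needs $\ge 2$ generators, so the truncated symmetric algebra there is \emph{not} $R[t]_{\le s-g+1}$ and your ``visibly perfect'' verification never gets off the ground. The paper's own Example (the one drawn from \cite{EU16}, with $R=k[[x,y,z]]$, $I=(x,y)^2$, $\ia$ three general cubics) is exactly such a case: $R/J$ is Artinian, the unique minimal prime is $\m$, $\mu((I/\ia)_\m)=3$, and indeed the ``naive'' pairing to $I^2/\ia I$ fails to be perfect — it is only the pairing to $\Sym^2$ that works. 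Your codimension-zero check would be checking the wrong thing at such primes. The same issue infects your faithfulness argument in (i): ``at the generic points of $R/J$, $I/\ia$ is locally cyclic'' is false in general.

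The missing idea, supplied by the paper's Lemma~\ref{Lemma6.6} (resting on Lemma~\ref{Lemma6.5}), is a Nakayama-style argument that sidesteps localization entirely: one only needs to know that $\Sym_R(M)\otimes_R k$ is a polynomial ring over $k$ (hence has no $S_+$-torsion in low degrees), so the natural map $\psi\otimes k$ is injective; given \emph{any} abstract isomorphism $\varphi$ between source and target of $\psi$, the composition $\varphi\circ\psi$ is then a surjective self-map of a finitely generated module, hence an automorphism, forcing $\psi$ itself to be an isomorphism. This avoids any assumption on local minimal number of generators of $I/\ia$ and is what makes the theorem hold beyond the arithmetic case. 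For (i), the paper likewise does not argue locally: faithfulness is Corollary~\ref{Corollary4.6}, deduced by comparing $\ann(\Sym^{s-g+1}(I/\ia))$ to the disguised residual $K$ and to $J$ via Propositions~\ref{Proposition3.5} and~\ref{Proposition4.4} and Theorem~\ref{Theorem4.5}, combined with the nesting of annihilators of graded pieces (Lemma~\ref{Lemma3.4}), while Cohen-Macaulayness comes from Theorem~\ref{Theorem3.9}.
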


\smallskip
The paper is organized as follows.

\smallskip
In Section~\ref{Section2}, we collect the notations and general facts about Koszul complexes. We prove duality results for Koszul cycles in Propositions~\ref{Proposition2.2} and~\ref{Proposition2.4}. We also describe the structure of the homology modules of the approximation complexes in Propositions~\ref{Proposition2.5} and~\ref{Proposition2.6}.

\smallskip
In Section~\ref{Section3}, we construct a family of residual approximation complex,  all of same finite size,  $\{_k^M\!\ZC_\bullet^+\}_{k\in \Z}$. This family is a generalization of the family $\{_k\ZC_\bullet^+\}_{k\in \Z}$ that is built in the recent article \cite{HN16} by Hassanzadeh and the second named author. We study the properties of these complexes, of particular complexes $_k^{ \omega}\ZC_\bullet^+,$ where $\omega$ is the canonical module of $R.$ The main results of this section are Propositions~\ref{Proposition3.1},~\ref{Proposition3.3} and~\ref{Proposition3.5}.

\smallskip
In Section~\ref{Section4}, we prove one of the main results of this paper: the Cohen-Macaulayness and the description of the canonical module of residual intersections. Recall that in \cite{Ha12}, Hassanzadeh proved that, under the sliding depth condition, $H_0( _0\ZC_\bullet^+)=R/K$ is Cohen-Macaulay of codimension $s,$ with $K\subset J,\ \sqrt{K}=\sqrt{J},$ and further $K=J$ whenever the residual is arithmetic.  First, we consider the height two case and show that under the $\SD_1$ condition, there exist an epimorphism $\xymatrix{\varphi: H_0(_{s-1}^{\quad \omega}\ZC_\bullet^+)\ar@{->>}[r] &\omega_{R/K}}$ which is an isomorphism if $I$ satisfies $\SD_2$ (Proposition~\ref{Proposition4.4}). By exploring these complexes, we show that, under the $\SD_1$ condition, $K=J;$ and therefore, under the $\SD_2$ condition, the canonical module of $R/J$ is $H_0(_{s-1}^{\quad \omega}\ZC_\bullet^+).$ In a second step, we reduce the general case to the height two case. Our main results in this section  are Theorems~\ref{Theorem4.5} and~\ref{Theorem4.8}.

\smallskip
In Section~\ref{Section5},  we study the stability of Hilbert functions and Castelnuovo-Mumford regularity of residual intersections. Using the acyclicity of $_0\ZC_\bullet^+$, Proposition~\ref{Proposition5.1} says that the Hilbert function of $R/J$ only depends on the degrees of the generators of $\ia$ and the Koszul homologies of $I$.  The graded structure of the canonical module of $R/J$ in Proposition~\ref{Proposition5.3} is the key to derive the  Castelnuovo-Mumford regularity  of residual intersection in Corollary~\ref{Corollary5.4}.

\smallskip
Finally, in Section~\ref{Section6}, we consider the case where $I$ is strongly Cohen-Macaulay. The main results of this section are Theorems~\ref{Theorem6.2} and~\ref{Theorem6.7}. In particular,   for $1\leq k\leq s-g,$ 
\begin{displaymath}
\omega_{\Sym_R^k(I/\ia)}\simeq\Sym_R^{s-g+1-k}(I/\ia)\otimes_R \omega,
\end{displaymath}
whenever $\Tor_1^R(R/I,\omega)=0.$ Consequently, we obtain some tight relations between the Hilbert series of the symmetric powers of $I/\ia$  in Corollary~\ref{Corollary6.5}.  We also give the closed formulas for the types and for the Bass number of some symmetric powers of $I/\ia$ in Corollaries~\ref{Corollary6.6} and~\ref{Corollary6.7}, respectively.

%@@@@@@@@@@@@@@@@@@@@@@@@@@@@@@@@@@

\section{Koszul cycles and approximation complexes}\label{Section2}
In this section we collect the notations and general facts about Koszul complexes and approximation complexes. The reader can consult for instance  \cite[Chapter~1]{BH98} and \cite{SV81, HSV82, HSV1983,HSV83}. We give some results on the duality for Koszul cycles and describe the $0$-th homology modules of approximation complexes with coefficients in a module. 

\smallskip
Assume that $R$ is a Noetherian ring, $I=(f_1,\ldots,f_r)$ is an ideal of $R.$
Let $M$ be a finitely generated $R$-module. The symmetric algebra of $M$ is denoted by $\Sym_R(M)$ and the $k$-th symmetric power of $M$ is denoted by $\Sym_R^k(M).$  We consider  $S=R[T_1,\ldots,T_r]$ as a standard graded algebra over $S_0=R.$ For a graded $S$-module $N,$ the $k$-th graded component of $N$ is denoted by $N_{[k]}.$ We make $\Sym_R(I)$ an $S$-algebra via the graded ring homomorphism $S\longrightarrow \Sym_R(I)$ sending $T_i$ to $f_i$ as an element of $\Sym_R(I)_{[1]}=I,$ and write $\Sym_R(I)=S/\mathfrak{L}.$ 

\smallskip
For a sequence of elements $\xx$ in $R,$ we denote the Koszul complex by $K_\bullet(\xx;M),$ its cycles by $Z_i(\xx;M),$ its boundaries by $B_i(\xx;M)$ and its homologies by $H_i(\xx;M).$ If $M=R,$ then we denote, for simplicity, $K_i, Z_i, B_i, H_i.$ To set more notation, when we draw the picture of a double complex obtained from a tensor product of two complexes (in the sense of \cite[2.7.1]{Wei94}) which at least one of them is finite, say $A\otimes B$ where $B$ is finite, we always put $A$ in the vertical and $B$ in the horizontal one. We also label the module which is in the up-right corner by $(0,0)$ and consider the labels for the rest, as the points in the third-quadrant.

\begin{lem} \label{Lemma2.1}
Let $R$ be a ring  and let $I=(f_1,\ldots,f_r)$ be an ideal of $R.$ If $I=R,$ then $Z_i\simeq \bigwedge\limits^i R^{r-1}.$
\end{lem}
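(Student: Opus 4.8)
The plan is to exploit the fact that when $I = R$, the Koszul complex $K_\bullet = K_\bullet(f_1,\ldots,f_r;R)$ is exact (it is the Koszul complex on a unit ideal, hence a split exact sequence of free modules). So every cycle module $Z_i$ equals the boundary module $B_i = \operatorname{Im}(\partial_{i+1})$, and the short exact sequences $0 \to Z_i \to K_i \to B_{i-1} \to 0$ all split since the $B_{i-1} = Z_{i-1}$ are direct summands of free modules, hence projective (and in fact free once we check ranks). First I would record that exactness forces $\rank Z_i = \binom{r}{i} - \binom{r}{i-1} + \binom{r}{i-2} - \cdots = \binom{r-1}{i}$ by the standard alternating-sum telescoping, which already matches $\rank \bigwedge^i R^{r-1}$.

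The cleanest route is to produce the isomorphism directly rather than only matching ranks. Since $I = R$, some $R$-linear combination $\sum a_i f_i = 1$; equivalently there is a section, i.e. an element $y \in K_1 = R^r$ with $\partial_1(y) = 1 \in R = K_0$. Contraction against this $y$ (or rather, the standard homotopy $s$ on $K_\bullet$ built from $y$, satisfying $\partial s + s \partial = \operatorname{id}$) gives an explicit splitting. Concretely, I would consider the subcomplex/retract obtained by "dividing out" the last variable: choosing coordinates so that, after the change of basis corresponding to $\sum a_i f_i = 1$, one of the Koszul generators becomes exact, the complex $K_\bullet(f_1,\ldots,f_r;R)$ becomes isomorphic to the mapping cone of the identity on $K_\bullet(f_1,\ldots,f_{r-1};R)$, i.e. to $K_\bullet(\mathbf{f}';R)[1] \oplus K_\bullet(\mathbf{f}';R)$ up to the cone differential. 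Tracking the cycles through this identification yields $Z_i(\mathbf{f};R) \cong Z_{i-1}(\mathbf{f}';R) \oplus B_i(\mathbf{f}';R)$; but over a unit ideal every term is again free and every sub-sequence splits, and inducting on $r$ collapses this to $Z_i \cong \bigwedge^i R^{r-1}$. The base case $r = 1$ is immediate: $K_\bullet(f;R)$ with $f$ a unit has $Z_0 = R = \bigwedge^0 R^0$, $Z_1 = 0 = \bigwedge^1 R^0$.

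Alternatively — and this may be the slickest phrasing for the paper — one observes that $Z_i = B_i$ is the $i$-th syzygy of $R$ in the (finite) free resolution $K_\bullet \to R \to 0$; shifting, $Z_\bullet$ (with the induced differentials being zero) fits into split exact sequences, so $K_i \cong Z_i \oplus Z_{i-1}$ for all $i$. Then one checks that the complex $(K_\bullet, \partial)$ is isomorphic, as a complex, to $\bigwedge^\bullet(R \oplus R^{r-1})$ with $R$ in degree one spanned by a cycle-completing element, and the Künneth/exterior-algebra decomposition $\bigwedge^i(R \oplus R^{r-1}) = \bigwedge^i R^{r-1} \oplus \left(R \otimes \bigwedge^{i-1} R^{r-1}\right)$ identifies the cycles in the left factor: $Z_i = \bigwedge^i R^{r-1}$, the summand on which the differential (which only "sees" the degree-one $R$) vanishes.

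The main obstacle is bookkeeping rather than anything conceptual: one must make sure the change of basis that turns $\sum a_i f_i = 1$ into "one generator is a unit" is compatible with the exterior-algebra grading and that the resulting summand decomposition is natural enough to pin down the cycles (not merely give an abstract isomorphism of modules). Once the splitting $K_i \cong Z_i \oplus Z_{i-1}$ is in hand and identified with the exterior-algebra decomposition above, the result is forced; so I would spend the proof's effort on writing that change of coordinates carefully and invoking the standard fact that a bounded complex of free modules with a contracting homotopy splits into a direct sum of two-term complexes $R \xrightarrow{\sim} R$, from which freeness of $Z_i$ and the rank count $\binom{r-1}{i}$ both follow, and then matching with $\bigwedge^\bullet R^{r-1}$.
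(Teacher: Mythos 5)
Your proof is correct and takes essentially the same route as the paper, which simply observes that $H_i=0$ for all $i$ (by \cite[Proposition~1.6.5(c)]{BH98}) and then cites split exactness of the Koszul complex on a unit ideal. You merely unpack that splitting explicitly, via the section $y$ with $\partial_1(y)=1$, the resulting contracting homotopy, the decomposition $R^r\cong\ker\phi\oplus R$ with $\ker\phi\cong R^{r-1}$, and the identification of $Z_i$ with the summand $\bigwedge^i R^{r-1}$ annihilated by contraction.
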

\begin{proof}
Since $I=R,\ H_i=0,$ for all $i$ by \cite[Proposition~1.6.5(c)]{BH98}. The result follows from the fact that the Koszul complex is split exact in this case.
\end{proof}

Let us recall the conditions $\Ser_k$ of Serre. Let $R$ be a Noetherian ring, and $k$ a non-integer. A finitely generated $R$-module $M$ satisfies \emph{Serre's condition} $\Ser_k$ if 
$$\depth(M_\point) \geq \min\{ k,\dim M_\point\}$$
for every prime ideal $\point$ of $R.$

\smallskip
Let $(R,\m)$ be a Noetherian local ring. The local cohomology modules of an $R$-module $M$ are denoted by $$H_\m^i(M): \, H_\m^i(M)=\lim_{\rightarrow}\Ext_R^i(R/m^n,M).$$
The local cohomoly functors $H_\m^i$ are the right-derived functors of $H_\m^0.$ The local cohomology can also be computed with the {\v C}ech complex $C_\m^\bullet$ constructed on a parameter system of $R:\, H_\m^i(M)=H^i(M\otimes_R C_\m^\bullet).$

\smallskip
Duality results for Koszul homology modules over Gorenstein rings have been obtained by several authors, for instance in \cite{Herzog74, Chardin04, MRS14}. For Koszul cycles,  the following holds.
\begin{pro}\label{Proposition2.2}
Let $(R,\m)$ be a Noetherian local ring  and let $I=(f_1,\ldots,f_r)$ be an ideal of $R.$ Suppose that $R$ satisfies  $\Ser_2$ and $\hei(I)\geq 2.$ Then, for all $0\leq i\leq r-1,$
\begin{align*}
Z_i\simeq \Hom_R(Z_{r-1-i},R). 
\end{align*}
\end{pro}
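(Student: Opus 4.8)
The plan is to establish the isomorphism $Z_i \simeq \Hom_R(Z_{r-1-i},R)$ by relating both sides to the Koszul complex $K_\bullet := K_\bullet(f_1,\ldots,f_r;R)$ and using the self-duality of the Koszul complex together with the hypotheses on $R$ and $\hei(I)$. Recall that $K_\bullet$ is self-dual: $\Hom_R(K_\bullet, R) \simeq K_\bullet[-r]$ via the pairing $K_i \otimes K_{r-i} \to K_r \simeq R$, so that $\Hom_R(K_i,R) \simeq K_{r-i}$ and the differential of the dual complex agrees (up to sign) with that of $K_\bullet$. Under this identification, the dual of the inclusion $Z_i \hookrightarrow K_i$ becomes a surjection $K_{r-i} \twoheadrightarrow B_{r-i-1}'$ onto something built from the dual, and dualizing the short exact sequences that split $K_\bullet$ into cycles and boundaries is the mechanism that trades cycles in degree $i$ for cycles in degree $r-1-i$.

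The key steps, in order, are as follows. First, I would fix the self-duality isomorphism $\theta\colon \Hom_R(K_\bullet,R) \xrightarrow{\sim} K_{r-\bullet}$ explicitly and record that it carries the transpose of $\partial_{i+1}\colon K_{i+1}\to K_i$ to $\pm\partial_{r-i}\colon K_{r-i}\to K_{r-i-1}$. Second, from the short exact sequence $0 \to Z_i \to K_i \xrightarrow{\partial_i} B_{i-1} \to 0$ and $0 \to B_{i-1} \to Z_{i-1} \to H_{i-1} \to 0$, I would apply $\Hom_R(-,R)$ and use $\Ext^1_R(B_{i-1},R)$ vanishing-type arguments: since $R$ satisfies $\Ser_2$ and $\hei(I)\geq 2$, the modules $B_{i-1}$, $Z_{i-1}$ are of the kind for which the relevant $\Ext^j_R(-,R)$ vanish in the needed range (this uses that away from $V(I)$ the Koszul complex is split exact, so these modules are locally free on the punctured spectrum and agree with boundaries/cycles there, forcing the $\Ext$'s to be supported on $V(I)$ of codimension $\geq 2$, hence to vanish by $\Ser_2$ / depth considerations). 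Third, combining these identifications, the dual of $0 \to Z_i \to K_i \to B_{i-1}\to 0$ together with $\theta$ yields $\Hom_R(Z_i,R) \simeq \coker(\Hom_R(\partial_i,R))$ placed in the right Koszul degree, and chasing through the self-duality this cokernel is precisely $Z_{r-1-i}$. Doing the symmetric computation (or applying the already-established map in the dual direction and checking it is inverse) gives $Z_i \simeq \Hom_R(Z_{r-1-i},R)$.

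The main obstacle I expect is the $\Ext$-vanishing bookkeeping: one must show that $\Ext^1_R(B_j,R)=0$ (and possibly $\Ext^1_R(Z_j,R)=0$ or $\Ext^2_R(H_j,R)=0$) for the relevant $j$, using only $\Ser_2$ on $R$ and $\hei(I)\geq 2$, rather than the stronger Gorenstein hypothesis under which such Koszul duality statements are classical. The point is that $B_j$ and $Z_j$ localize to free modules (direct sums of $\bigwedge^\ast R^{r-1}$, by Lemma~\ref{Lemma2.1}) at every prime not containing $I$, so the obstruction modules $\Ext^i_R(B_j,R)$ are supported on $V(I)$; since $\hei(I)\geq 2$ and $R$ is $\Ser_2$, a local-duality / depth argument (for instance, $\depth_{R_\point}(R_\point)\geq 2$ for $\point\in V(I)$ forces $\Hom$ and first $\Ext$ into $R$ to behave well) kills these in the needed degrees. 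Once this is in place, the rest is formal diagram chasing through the self-dual Koszul complex, and one should also verify the boundary cases $i=0$ (where $Z_0 = R$ should be dual to $Z_{r-1}$, consistent with $Z_{r-1} \simeq K_r \simeq R$ locally off $V(I)$ and the $\Ser_2$ condition globally) to make sure the statement is uniform across $0\leq i\leq r-1$.
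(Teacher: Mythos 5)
Your proposal takes a genuinely different route from the paper, but it contains a gap that I don't believe can be repaired along the lines you describe. The core of your argument is to dualize the short exact sequences $0 \to Z_i \to K_i \to B_{i-1} \to 0$ and to claim that the obstruction modules $\Ext^1_R(B_{i-1},R)$ (and similar ones) vanish because they are supported on $V(I)$, which has codimension $\geq 2$, together with $\Ser_2$. This inference is incorrect: being supported in codimension $\geq 2$ does not force a module to vanish, and $\Ser_2$ on $R$ gives no such vanishing. A concrete counterexample to your claim already appears in the simplest situation covered by the proposition: take $R=k[[x,y]]$, $I=(x,y)$, $r=2$, so $\hei(I)=2$ and $R$ is regular, hence $\Ser_2$. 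Here $B_0=I$, and from the free resolution $0\to R\to R^2\to I\to 0$ one computes $\Ext^1_R(B_0,R)=R/(x,y)\neq 0$. Moreover, in this example the cokernel you write down, namely $\coker\bigl(\Hom_R(B_0,R)\to\Hom_R(K_1,R)\bigr)$, is isomorphic to $I$, whereas $\Hom_R(Z_1,R)\simeq R$; so the identification "$\Hom_R(Z_i,R)\simeq\coker(\cdots)$" you rely on fails, even though the proposition itself is true in this case ($Z_0\simeq R\simeq\Hom_R(Z_1,R)$ and $Z_1\simeq R\simeq\Hom_R(Z_0,R)$).

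The missing idea is that one should not try to compute $\Hom_R(Z_{r-1-i},R)$ abstractly by dualizing the Koszul filtration; the relevant $\Ext$'s genuinely do not vanish under the stated hypotheses, so this computation cannot close. Instead, the paper's proof first \emph{constructs} a natural comparison map $\psi_i\colon Z_i\to\Hom_R(Z_{r-1-i},R)$ using the multiplication of the Koszul differential graded algebra (the pairing $Z_i\times Z_{r-1-i}\to Z_{r-1}\simeq K_r\simeq R$), and then proves $\psi_i$ is an isomorphism by induction on $\hei(\point)$: for $\hei(\point)<2$ one uses Lemma~\ref{Lemma2.1} and the split-exactness of the Koszul complex when $I_\point=R_\point$; for $\hei(\point)\geq 2$ one reduces to the case where $\Ker(\psi_i)$ and $\coker(\psi_i)$ are supported at the closed point and kills them using $\depth(Z_i)\geq 2$ and $\depth(\Hom_R(Z_{r-1-i},R))\geq 2$, both of which follow from $\Ser_2$. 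Your appeal to self-duality of the Koszul complex and to the local-freeness of $Z_j,B_j$ off $V(I)$ are the right ingredients for the \emph{local verification} step, but a map must be in hand first; without it, the $\Ext$-vanishing you would need to recover $\Hom_R(Z_i,R)$ from the filtration is simply false.
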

\begin{proof}
The inclusions $Z_i\hookrightarrow K_i=\bigwedge\limits^iR^r$ and $Z_{r-1-i}\hookrightarrow K_{r-1-i}=\bigwedge\limits^{r-1-i}R^r$ induce a map
\begin{align*}
\xymatrix{\varphi_i:Z_i\times Z_{r-1-i}\ar[r] & K_i\times K_{r-1-i}\ar[r]& K_{r-1},}
\end{align*}
where the last map is the multiplication of the Koszul complex, which is a differential graded algebra, and $\Im(\varphi_i)\subset Z_{r-1}\simeq K_r\simeq R.$ It follows that $\varphi_i$ induces a map
\begin{align*}
\xymatrix{\psi_i:Z_i\ar[r] & \Hom_R(Z_{r-1-i},R).}
\end{align*}

\smallskip
We induct on the height to show that for every $\point\in \Spec(R),\ (\psi_i)_\point$ is an isomorphism.  If $\hei(\point)<2,$ then $I_\point=R_\point$, by Lemma~\ref{Lemma2.1} 
\begin{displaymath}
(Z_i)_\point  \simeq \bigwedge^i R_\point^{r-1}\quad \text{and}\quad  (Z_{r-1-i})_\point\simeq \bigwedge^{r-1-i} R_\point^{r-1}
\end{displaymath}
and \cite[Proposition~1.6.10(b)]{BH98} shows that $(\psi_i)_\point$ is an isomorphism.

\smallskip
Suppose that $\hei(\point)\geq 2$ and $(\psi_i)_\q$ is an isomorphism for all prime contained properly in $\point.$ Replacing  $R$ by $R_\point$ and $\m$ by $\point R_\point,$ we can suppose that $\psi_i$ is an isomorphism on the punctured spectrum : the kernel and the corkernel of $\psi_i$ are annihilated by a power of $\m.$ It follows that $H_\m^j(\Ker(\psi_i))=H_\m^j(\coker(\psi_i))=0$ for $j>~0.$ Since $R$ satisfies $\Ser_2$,  $\depth(Z_i)\geq \min\{2,\depth(R)\}=2.$ The exact sequence
\begin{align*}
0\longrightarrow \Ker(\psi_i) \longrightarrow Z_i \longrightarrow \Im(\psi_i)\longrightarrow 0
\end{align*}
implies that $\Ker(\psi_i)=H_\m ^0(\Ker(\psi_i))=0$. Observing that $$\depth(\Hom_R(Z_{r-1-i}, R))\geq \min\{2,\depth(R) \}= 2,$$
the exact sequence 
\begin{align*}
\xymatrix{0 \ar[r]& Z_i\ar[r] & \Hom_r(Z_{r-1-i},R) \ar[r]& \coker(\psi_i) \ar[r]& 0}
\end{align*}
 implies that  $\coker(\psi_i)=H_\m^0(\coker(\psi_i))=0$.
\end{proof}

\smallskip
To fix the terminology we will use, we recall some notations and definitions. Let $(R,\m)$ be a Noetherian local ring. The injective envelope of the residue field $R/\m$ is denoted by $E(R/\m)$ (or by $E$ when the ring is clearly identified by the context). The Matlis dual of an $R$-module $M$ is the module $M^\vee=\Hom_R(M,E(R/\m)).$  The Matlis duality functor is exact, sends Noetherian modules to Artinian modules and Artinian modules to Noetherian modules, and preserves annihilators. 
 
 \smallskip
 When the module $M$ is finitely generated, we have $M^{\vee\vee} \simeq\widehat{M},$ the $\m$-adic completion of $M,$ while $X\simeq X^{\vee\vee}$ when the module $X$ is of finite length.
 
\smallskip
When $R$ is the homomorphic image of a Gorenstein local ring $A,$ the canonical module of a finitely generated $R$-module $M,$ denoted by $\omega_M,$ is defined by
 $$\omega_M:=\Ext_A^{m-n}(M,A)$$
 where  $m=\dim(A)$ and  $n=\dim(M)=\dim(R/\ann_R(M)).$ This module does not depend on $A.$ By the local duality theorem
 $$H_\m^n(M)\simeq \omega_M^\vee.$$
 
\smallskip
We are particularly interested in the case that $R$ admits the canonical module, hence in the sequel we asume that $R$ is the quotient of a Gorenstein ring and write $\omega$ for the canonical module of $R.$ Whenever $R$ is Cohen-Macaulay, $\omega$ is a canonical module of $R$ in the sense of \cite[Definition~3.3.1]{BH98}.

\smallskip
If $R$ is a Gorenstein local ring, $\omega\simeq R$, therefore, by Proposition~\ref{Proposition2.2}, 
$$\omega_{Z_p}\simeq Z_{r-1-p}$$
for all $0\leq p\leq r-1.$ To generalize this result, we will use a result of Herzog and Kunz,
\begin{lem} \cite[Lemma~5.8]{HK71} \label{Lemma2.3}
Let $(R,\m)$  be a Noetherian local ring and let $M,N$ be two finitely generated $R$-modules. If $\widehat{M} \simeq \widehat{N},$ then  $M\simeq N.$
\end{lem}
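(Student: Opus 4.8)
The plan is to combine the faithful flatness of the $\m$-adic completion $\widehat R$ over $R$ with the compatibility of $\Hom$ with completion; the statement is classical, so I will only indicate the argument. First I would record two standard facts: since $R$ is Noetherian and $M$ is finitely generated (hence finitely presented), the natural map
$$\Hom_R(M,N)\otimes_R\widehat R\longrightarrow \Hom_{\widehat R}(\widehat M,\widehat N)$$
is an isomorphism; and $\widehat\m=\m\widehat R$, so that $\widehat R/\widehat\m\simeq R/\m=:k$ and $P\otimes_{\widehat R}k\simeq P/\widehat\m P$ for every finitely generated $\widehat R$-module $P$.

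Given an isomorphism $\varphi\colon\widehat M\to\widehat N$, I would consider its image $\bar\varphi$ in
$$\Hom_{\widehat R}(\widehat M,\widehat N)\otimes_{\widehat R}k\simeq \Hom_R(M,N)\otimes_R k\simeq \Hom_R(M,N)/\m\Hom_R(M,N),$$
and, using that $\Hom_R(M,N)\to\Hom_R(M,N)/\m\Hom_R(M,N)$ is surjective, choose $f\in\Hom_R(M,N)$ mapping to $\bar\varphi$. By faithful flatness of $\widehat R$ it is enough to check that $\widehat f\colon\widehat M\to\widehat N$ is an isomorphism. Since $\widehat f$ and $\varphi$ have the same image $\bar\varphi$ in $\Hom_{\widehat R}(\widehat M,\widehat N)/\widehat\m\Hom_{\widehat R}(\widehat M,\widehat N)$, one can write $\widehat f=\varphi+\sum_j\widehat m_j\psi_j$ with $\widehat m_j\in\widehat\m$ and $\psi_j\in\Hom_{\widehat R}(\widehat M,\widehat N)$, whence $\widehat N=\Im(\varphi)\subseteq\Im(\widehat f)+\widehat\m\widehat N$, i.e. $\widehat N=\Im(\widehat f)+\widehat\m\widehat N$; Nakayama's lemma (applicable as $\widehat N$ is finitely generated over the Noetherian local ring $\widehat R$) then gives that $\widehat f$ is surjective. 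Finally $\varphi^{-1}\circ\widehat f$ is a surjective endomorphism of the finitely generated module $\widehat M$ over the Noetherian ring $\widehat R$, hence injective, so $\widehat f$ is injective as well. Thus $\widehat f$ is an isomorphism, and therefore so is $f\colon M\to N$.

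The only delicate point is the lifting step: one has to arrange that the completion $\widehat f$ of the chosen lift is congruent to $\varphi$ modulo $\widehat\m$ inside the $\Hom$-module, so that the Nakayama argument for surjectivity applies. This is precisely what the identification $\Hom_R(M,N)\otimes_R\widehat R\simeq\Hom_{\widehat R}(\widehat M,\widehat N)$ — valid because $M$ is finitely presented over the Noetherian ring $R$ — makes available; the remaining steps are routine.
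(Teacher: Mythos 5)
The paper itself does not prove this lemma; it simply cites \cite[Lemma~5.8]{HK71}, so there is no in-paper argument to compare against. Your proof is correct and is the standard one: lift $\varphi$ through the finitely-presented identification $\Hom_R(M,N)\otimes_R\widehat R\simeq\Hom_{\widehat R}(\widehat M,\widehat N)$ to an $f\in\Hom_R(M,N)$ agreeing with $\varphi$ modulo $\widehat\m$; Nakayama then gives surjectivity of $\widehat f$, and the fact that a surjective endomorphism of a finitely generated module over a commutative ring is injective (applied to $\varphi^{-1}\circ\widehat f$) gives injectivity; faithful flatness of $\widehat R$ transfers this back to $f$. The two ingredients you isolate — the base-change isomorphism for $\Hom$ (valid because $M$ is finitely presented and $\widehat R$ is $R$-flat) and the Nakayama/Cayley--Hamilton argument for surjective endomorphisms — are exactly what a complete proof needs, and the remaining identifications ($\widehat\m=\m\widehat R$, $\widehat R/\widehat\m\simeq k$, $\widehat M\simeq M\otimes_R\widehat R$) are indeed routine for a Noetherian local ring. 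No gaps.
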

We will denote by $Z_i^\omega:=Z_i(\ff;\omega)$ the module of $i$-th Koszul cycle, with $\ff=f_1,\ldots,f_r.$ 
\begin{pro}\label{Proposition2.4}
Let $(R,\m)$ be a Noetherian local ring of dimension $d$ which is an epimorphic image of a Gorenstein ring. Suppose that $I=(f_1,\ldots,f_r)$ is an ideal of $R,$ with $\hei(I)\geq 2.$ Then, for all $0\leq p\leq r-1,$
\begin{displaymath}
\omega_{Z_p}\simeq Z_{r-1-p}^{\omega_R}.
\end{displaymath}
Moreover, if $R$ satisfies $\Ser_2,$ then
\begin{displaymath}
\omega_{Z^{\omega_R}_p}\simeq Z_{r-1-p}.
\end{displaymath}
\end{pro}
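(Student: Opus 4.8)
The plan is to deduce Proposition~\ref{Proposition2.4} from the Gorenstein case recorded just above (Proposition~\ref{Proposition2.2} applied with $R$ Gorenstein, giving $\omega_{Z_p}\simeq Z_{r-1-p}$) by the standard device of reducing to a Gorenstein presentation of $R$. Write $R=A/\ib$ with $(A,\m_A)$ a Gorenstein local ring of dimension $m$; lift $f_1,\ldots,f_r$ to elements $\tilde f_1,\ldots,\tilde f_r$ of $A$ and set $\tilde I=(\tilde f_1,\ldots,\tilde f_r)$. The first step is to relate the Koszul cycles of $\ff$ on $R$ and on $\omega=\omega_R=\Ext_A^{m-d}(R,A)$ to the Koszul cycles of $\tilde\ff$ on $A$. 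The key point is that $Z_p=Z_p(\ff;R)$ need not be $Z_p(\tilde\ff;A)\otimes_A R$, but one does have a clean homological handle: since $\hei(I)\geq 2$, the cycles $Z_p$ are the image of a map of free modules (a truncation of the Koszul complex), so $\Ext_A^j(Z_p,A)$ and the formation of $\omega_{Z_p}$ can be computed by dualizing Koszul data into $A$.

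The cleanest route, which I would take, is as follows. First establish the isomorphism $\omega_{Z_p}\simeq Z_{r-1-p}^{\omega}$ by checking it after $\m$-adic completion, using Lemma~\ref{Lemma2.3}; completion lets us assume $R$ (hence $A$) complete, so all the canonical modules behave well and $\omega^{\vee\vee}\simeq\omega$. Now exploit that the pairing constructed in the proof of Proposition~\ref{Proposition2.2}, namely $Z_p\times Z_{r-1-p}^{\omega}\to Z_{r-1}^{\omega}\simeq \omega$ coming from the Koszul DG-module structure of $K_\bullet(\ff;\omega)$ over $K_\bullet(\ff;R)$, induces a natural map $\psi_p\colon Z_{r-1-p}^{\omega}\to \Hom_R(Z_p,\omega)$. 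One checks $\Hom_R(Z_p,\omega)\simeq\omega_{Z_p}$ (for this use $\hei(I)\geq2$, so that $Z_p$ has rank and is a second-syzygy-type module, together with the defining property $\omega_M=\Ext_A^{m-\dim M}(M,A)$ and the fact that $\dim Z_p=d$ when $\hei(I)\ge2$). Then localize and induct on $\hei(\point)$ exactly as in Proposition~\ref{Proposition2.2}: on the locus $\hei(\point)<2$ one has $I_\point=R_\point$, so by Lemma~\ref{Lemma2.1} the cycles are exterior powers of free modules, $\omega$ is free there, and the pairing is the perfect exterior pairing; on the deeper locus the Serre-type depth estimates $\depth(Z_{r-1-p}^{\omega})\ge 2$ and $\depth(\omega_{Z_p})\ge 2$ (the latter because $\omega_{Z_p}$ satisfies $\Ser_2$ whenever $\dim Z_p=d$, which holds here) force $\ker\psi_p$ and $\coker\psi_p$, being supported at $\m$, to vanish. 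For the "moreover" part, assume additionally that $R$ satisfies $\Ser_2$; then $Z_p^{\omega}$ has depth $\ge 2$ as well, and one runs the symmetric argument producing $\psi_p'\colon Z_{r-1-p}\to\Hom_R(Z_p^{\omega},\omega)\simeq\omega_{Z_p^{\omega}}$ and again kills kernel and cokernel with the same depth $\ge 2$ argument. Alternatively, and more efficiently, the "moreover" statement follows formally from the first part by applying $\omega_{(-)}$ to $\omega_{Z_{r-1-p}}\simeq Z_p^{\omega}$ and using that $\omega_{\omega_N}\simeq N$ for $N$ satisfying $\Ser_2$ with $\dim N=d$ over a complete local ring — which is the reason the hypothesis $\Ser_2$ is imposed only for the second isomorphism.

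The main obstacle I anticipate is the identification $\Hom_R(Z_p,\omega)\simeq\omega_{Z_p}$ and, dually, the control of $\depth$ of these Hom-modules: one must verify that $Z_p$ (resp.\ $Z_p^{\omega}$) is a module of dimension $d$ whose canonical module is computed correctly by $\Hom_R(-,\omega)$ rather than a higher $\Ext$, and that the relevant Serre condition $\Ser_2$ passes to $\omega_{Z_p}$. This is where the hypothesis $\hei(I)\geq 2$ is essential — it guarantees $Z_p$ has full dimension $d$ and is "big enough" (a high enough syzygy in the truncated Koszul complex) for the first-$\Ext$ computation to be valid — and it is also what makes the punctured-spectrum induction start correctly via Lemma~\ref{Lemma2.1}. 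The rest is a routine transcription of the proof of Proposition~\ref{Proposition2.2} with coefficients in $\omega$, plus one invocation of Lemma~\ref{Lemma2.3} to descend from the completion.
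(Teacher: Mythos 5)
The paper proves Proposition~\ref{Proposition2.4} by an altogether different device than the one you propose: it applies the double complex $C_\m^\bullet(\textbf{K}_\bullet^{>p})$ to the truncated Koszul complex $0\to K_r\to\cdots\to K_{p+1}\to Z_p\to 0$, compares the two spectral sequences (using only that $\dim H_i(\ff;R)\leq d-2$, a consequence of $\hei(I)\geq 2$), extracts the exact sequence $H_\m^d(K_{p+2})\to H_\m^d(K_{p+1})\to H_\m^d(Z_p)\to 0$, identifies $H_\m^d(K_i)\simeq K_{r-i}(\ff;\omega)^\vee$ by local duality, and so reads off $H_\m^d(Z_p)\simeq(Z_{r-1-p}^\omega)^\vee$; local duality again together with Lemma~\ref{Lemma2.3} finish. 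That argument needs neither a pairing, nor a punctured-spectrum induction, nor any $\Ser_2$ or Cohen--Macaulayness hypothesis. Your route---construct $\psi_p\colon Z_{r-1-p}^\omega\to\Hom_R(Z_p,\omega)$ from the Koszul DG-module structure and prove it an isomorphism by the height induction of Proposition~\ref{Proposition2.2}---is genuinely different in spirit and, I believe, repairable, but as written it has two substantive gaps.

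First, for the pairing to land in $\omega$ you need $Z_{r-1}^\omega\simeq K_r(\ff;\omega)\simeq\omega$, i.e.\ $H_{r-1}(\ff;\omega)=H_r(\ff;\omega)=0$, i.e.\ the grade of $I$ on $\omega$ is at least $2$. Since the first statement of the proposition carries no $\Ser_2$ hypothesis on $R$, this is not automatic from $\hei(I)\geq 2$: one must use Aoyama's theorem that $\omega_R$ always satisfies $\Ser_2$, together with catenarity of a quotient of a Gorenstein ring, to check $\dim(\omega_R)_\point\geq 2$ for every $\point\in V(I)\cap\Supp(\omega)$; you implicitly reuse the same estimate for $\depth(Z_{r-1-p}^\omega)\geq 2$. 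Second, the identification $\Hom_R(Z_p,\omega)\simeq\omega_{Z_p}$, which you rightly flag as the crux, does not follow from ``$Z_p$ is a second-syzygy-type module'' absent Cohen--Macaulayness. The honest argument, after completing, is that right-exactness of $H_\m^d$ on a presentation of $Z_p$ gives $H_\m^d(Z_p)\simeq Z_p\otimes_R H_\m^d(R)\simeq\Hom_R(Z_p,\omega)^\vee$, while $\dim Z_p=d$ (at any minimal prime $\q$ of $R$, $\hei(\q)=0<2\leq\hei(I)$ forces $I_\q=R_\q$, so $(Z_p)_\q\simeq\bigwedge^p R_\q^{r-1}\neq 0$) lets local duality give $H_\m^d(Z_p)\simeq\omega_{Z_p}^\vee$; Lemma~\ref{Lemma2.3} then yields the identification. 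But that is exactly the local-duality computation the paper runs directly on the truncated Koszul complex, so you end up smuggling the paper's argument into a ``routine check'', and the spectral-sequence proof is both shorter and more self-contained. One further slip: in the base case $\hei(\point)<2$ you say ``$\omega$ is free there''---it need not be; what matters is that $(Z_p)_\point$ is free by Lemma~\ref{Lemma2.1}, so the localized pairing is the perfect exterior pairing tensored with $\omega_\point$.
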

\begin{proof} 
For simplicity, set  $\omega :=\omega_R$. First we  consider the truncated complexes
\begin{align*}
\textbf{K}_\bullet^{>p}: 0\longrightarrow K_r\longrightarrow \cdots \longrightarrow K_{p+1} \longrightarrow Z_p\longrightarrow 0.
\end{align*}
The double complex $C_\m^\bullet (\textbf{K}_\bullet^{>p})$ gives rise to two spectral sequences. The second terms of the horizonal spectral are
\begin{align*}
^2\textbf{E}_{\hor}^{-i,-j}= H_\m^j(H_{i+p})
\end{align*}
and the first terms of the vertical spectral are
\begin{small}
\begin{align*}
\xymatrix@R=10pt{0\ar[r]&H_\m^0(K_r)\ar[r]&\cdots \ar[r] &H_\m^0(K_{p+1})\ar[r]&H_\m^0(Z_p)\ar[r]&0\\   &\vdots   &\cdots & \vdots &\vdots &\\
0\ar[r]&H_\m^{d-1}(K_r)\ar[r]&\cdots \ar[r] &H_\m^{d-1}(K_{p+1})\ar[r]&H_\m^{d-1}(Z_p)\ar[r]&0\\
0\ar[r]&H_\m^d(K_r)\ar[r]&\cdots \ar[r] &H_\m^d(K_{p+1})\ar[r]&H_\m^d(Z_p)\ar[r]&0.}
\end{align*}
\end{small}
	
\smallskip
Since $I$ annihilates $H_i,\ \dim(H_i)=\dim(R/I)\leq \dim(R)-\hei(I)\leq d-2$ if $H_i\neq 0.$ Therefore, $^2\textbf{E}_{\hor}^{-i,-j}= H_\m^j(H_{i+p})=0,$ for all $j>d-2.$ The comparison of two spectral sequences gives a short exact sequence
\begin{align}\label{Sequence3.1}
\xymatrix{H_\m^d(K_{p+2}) \ar[r] & H_\m^d(K_{p+1})\ar[r]&H_\m^d(Z_p)\ar[r]& 0.}
\end{align}
	
\smallskip
By local duality
\begin{displaymath}
H_\m^d(K_i)\simeq \Hom_R(K_i,\omega)^\vee\simeq (\Hom_R(K_i,R)\otimes_R\omega)^\vee\simeq (K_{r-i}\otimes_R\omega)^\vee=K_{r-i}(\ff;\omega)^\vee.
\end{displaymath}
	
\smallskip
Thus the exact sequence (\ref{Sequence3.1}) provides an exact sequence
\begin{align*}
\xymatrix{K_{r-p-2}(\ff;\omega)^\vee \ar[r] & K_{r-p-1}(\ff;\omega)^\vee\ar[r]&H_\m^d(Z_p)\ar[r]& 0}
\end{align*}
that gives $H_\m^d(Z_p)\simeq {Z^\omega_{r-1-p}}^\vee.$  Then the first isomorphism follows from this isomorphism, the local duality, and Lemma~\ref{Lemma2.3}.
	
\smallskip
The second assertion is proved similarly, by considering the truncated complexes
\begin{align*}
{\textbf{K}^\omega}_\bullet^{>p}: 0\longrightarrow K_r(\ff;\omega)\longrightarrow \cdots \longrightarrow K_{p+1}(\ff;\omega) \longrightarrow Z^\omega_p\longrightarrow 0 
\end{align*}
and  the double complex $C_\m^\bullet ({\textbf{K}^\omega}_\bullet^{>p}).$ 
	
\smallskip
Since $I$ annihilates $H_i(\ff;\omega),\ \dim(H_i(\ff;\omega))\leq \dim(R)-\hei(I)\leq d-2,$ for all $0\leq i\leq r-2.$ Thus $H_\m^j(H_i(\ff;\omega))=0,$ for all $j>d-2$ and $0\leq i\leq r-2.$ By comparing two spectral sequences, we also obtain a short exact sequence
\begin{align} \label{Sequence3.2}
\xymatrix{H_\m^d(K_{p+2}(\ff;\omega))\ar[r] & H_\m^d(K_{p+1}(\ff;\omega))\ar[r]&H_\m^d(Z^\omega_p)\ar[r]& 0.}
\end{align}
	
\smallskip
By local duality
\begin{align*}
H_\m^d(K_i(\ff;\omega))&\simeq H_\m^d(K_i\otimes_R \omega)\simeq \Hom_R(K_i\otimes_R \omega,\omega)^\vee\\
&\simeq \Hom_R(K_i,\Hom_R (\omega,\omega))^\vee\simeq \Hom_R(K_i,R)^\vee\simeq  K_{r-i}^\vee
\end{align*}
as $\Hom_R(\omega,\omega)\simeq R$ since $R$ satisfies $\Ser_2.$

\smallskip
The exact sequence (\ref{Sequence3.2}) provides an exact sequence
\begin{align*}
\xymatrix{K_{r-p-2}^\vee\ar[r] & K_{r-p-1}^\vee\ar[r]&H_\m^d(Z^\omega_p)\ar[r]& 0}
\end{align*}
which shows that $H_\m^d(Z^\omega_p)\simeq Z_{r-1-p}^\vee.$
\end{proof}

%############################# End the proof of Proposition#########
Now we describe the $0$-th homology module of  \textit{approximation complexes.} These complexes was introduced in \cite{SV81} and systematically developed in \cite{HSV82} and \cite{HSV1983}. Recall that the approximation complex $\ZC_\bullet(\ff;M)$ is
\begin{align*}
\xymatrix{0\ar[r]& Z_{r}^M\otimes_R S(-r)\ar[r]&\cdots \ar[r] & Z_{1}^M\otimes_R S(-1)\ar[r]^{\partial^\TT_M}& Z_0^M\otimes_R S\ar[r]&0}
\end{align*}
that can be written
\begin{align*}
\xymatrix{0\ar[r]& Z_{r}^M[\TT](-r)\ar[r]&\cdots \ar[r] & Z_{1}^M[\TT](-1)\ar[r]^{\partial^\TT_M}& M[\TT]\ar[r]&0}
\end{align*}
where $ \TT=T_1,\ldots,T_r$ and $Z_i^M=Z_i(\ff;M)$ is the $i$-th Koszul cycle of $K_\bullet(\ff;M).$ By the definition,
\begin{align} \label{Formula2}
H_0(\ZC_\bullet(\ff;M)) \simeq M[T_1,\ldots,T_r]/\mathfrak{L}_M,
\end{align}
where $\mathfrak{L}_M$ is the submodule of $M[T_1,\ldots,T_r]$ generated by the linear forms $c_1T_1+\cdots+c_rT_r$ with $(c_1,\ldots,c_r)\in Z_1^M.$  

\smallskip
Let $\F_\bullet$ be a free resolution of $R/I$ of the form
\begin{align*}
\xymatrix{\cdots \ar[r]& F_1\ar[r]^\delta&R^r \ar[r] & R\ar[r]&0},
\end{align*}
where $F_1$ is the free $R$-module indexed by a generating set of $Z_1.$ By the definition,
$$\Tor_1^R(R/I,M)=Z_1^M/\Im(\delta\otimes 1_M)\hookrightarrow M^r/\Im(\delta\otimes 1_M),$$
where $1_M$ denote the identity morphism on $M.$ Note that $\delta$ is induced by the inclusion $\xymatrix{Z_1\ar@{^{(}->}[r] &R^r.}$ Therefore, $\Im(\delta\otimes 1_M)=(\delta \otimes 1_M)(Z_1\otimes_R M)$ and we obtain an exact sequence
\begin{align}\label{Sequence3.3}
\xymatrix{ Z_1\otimes_R M\ar[rr]^{\delta\otimes 1_M}&& Z_1^M\ar[r] & \Tor_1^R(R/I,M)\ar[r]& 0.}
\end{align}

\smallskip
Let $\mathfrak{L}$ be the submodule of $S=R[T_1,\ldots,T_r]$ generated by the linear forms $c_1T_1+\cdots+c_rT_r$ with $(c_1,\ldots,c_r)\in Z_1.$ Then the exact sequence
\begin{displaymath}
	\xymatrix{0\ar[r] & \mathfrak{L} \ar[r]^\theta & S\ar[r]  & \Sym_R(I)\ar[r] & 0}
\end{displaymath}
provides an exact sequence
\begin{displaymath}
	\xymatrix{\cdots \ar[r] & \mathfrak{L}\otimes_R M \ar[r]^{\theta\otimes 1_M\quad} & M[T_1,\ldots,T_r]\ar[r]  & \Sym_R(I)\otimes_R M\ar[r] & 0.}
\end{displaymath}
The image of $\theta\otimes 1_M$ is denoted by $\mathfrak{L}M.$ It follows that 
\begin{align}\label{Formula3}
	\Sym_R(I)\otimes_R M\simeq M[T_1,\ldots,T_r]/\mathfrak{L}M.
\end{align}
Notice that $\mathfrak{L}M$ is the submodule of $M[T_1,\ldots,T_r]$ generated by the linear forms $c_1T_1+\cdots+c_rT_r$ with $(c_1,\ldots,c_r)\in \Im(\delta\otimes 1_M)\subset Z_1^M$; thus $\mathfrak{L}M\subset \mathfrak{L}_M.$ 

\smallskip
Let $\mathfrak{L'}$ be the submodule of $M[T_1,\ldots,T_r]/\mathfrak{L}M$ generated by the linear forms $c_1T_1+\cdots+c_rT_r+\mathfrak{L}M$ with $(c_1,\ldots,c_r)+\Im(\delta\otimes 1_M)\in \Tor_1^R(R/I,M).$ Then $$\mathfrak{L'}\simeq \mathfrak{L}_M/\mathfrak{L}M.$$

It follows that
\begin{align*}
M[T_1,\ldots,T_r]/(\mathfrak{L}M+\mathfrak{L'})&\simeq (M[T_1,\ldots,T_r]/\mathfrak{L}M)/ \mathfrak{L'} \\
&\simeq  (M[T_1,\ldots,T_r]/\mathfrak{L}M)/(\mathfrak{L}_M/\mathfrak{L}M) \\
&\simeq M[T_1,\ldots,T_r]/\mathfrak{L}_M.
\end{align*}
Thus we have already proved the following.
\begin{pro}\label{Proposition2.5}
Let $R$ be a Noetherian ring and let $I=(f_1,\ldots,f_r)$ be an ideal of $R. $ Assume that $M$ is a finitely generated $R$-module. Then
$$
H_0(\ZC_\bullet(\ff; M)) \simeq M[T_1,\ldots,T_r]/(\mathfrak{L}M+\mathfrak{L'}),
$$
where $\mathfrak{L}\subset S$ is the defining ideal of $\Sym_R(I)$ and $\mathfrak{L'}$ is spanned by generators of $\Tor_1^R(R/I,M).$
\end{pro}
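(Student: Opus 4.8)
The plan is to realize $H_0(\ZC_\bullet(\ff;M))$ as an explicit quotient of the polynomial module $M[\TT]$ and then to split the relations it carries into two parts: the ones that already occur in the defining ideal $\mathfrak{L}$ of $\Sym_R(I)$, and the ones that record $\Tor_1^R(R/I,M)$. First I would record, straight from the shape of the approximation complex, the isomorphism $H_0(\ZC_\bullet(\ff;M))\simeq M[\TT]/\mathfrak{L}_M$ of (\ref{Formula2}), where $\mathfrak{L}_M\subseteq M[\TT]$ is the submodule generated by the linear forms $c_1T_1+\cdots+c_rT_r$ with $(c_1,\ldots,c_r)\in Z_1^M$; this is simply the cokernel of $\partial^\TT_M$ computed in degrees $\leq 1$, the higher-degree relations being consequences of the linear ones.

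Next, fixing a free presentation $F_1\xrightarrow{\delta}R^r\to R\to 0$ of $R/I$ with $F_1$ indexed by a generating set of $Z_1$, I would tensor the defining sequence $0\to\mathfrak{L}\to S\to\Sym_R(I)\to 0$ with $M$. Since $-\otimes_R M$ is only right exact, the object that matters is not $\mathfrak{L}\otimes_R M$ itself but its image $\mathfrak{L}M\subseteq M[\TT]$, which is the submodule generated by the linear forms whose coefficient vectors lie in $\Im(\delta\otimes 1_M)\subseteq Z_1^M$; this gives $\Sym_R(I)\otimes_R M\simeq M[\TT]/\mathfrak{L}M$ as in (\ref{Formula3}), and plainly $\mathfrak{L}M\subseteq\mathfrak{L}_M$. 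I would then read off from the right-exact sequence (\ref{Sequence3.3}), namely $Z_1\otimes_R M\xrightarrow{\delta\otimes 1_M}Z_1^M\to\Tor_1^R(R/I,M)\to 0$, that $\mathfrak{L}_M/\mathfrak{L}M$ is exactly the submodule $\mathfrak{L'}$ of $M[\TT]/\mathfrak{L}M$ spanned by the classes of those linear forms whose coefficient vectors represent a generating set of $\Tor_1^R(R/I,M)$.

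Finally, the third isomorphism theorem yields $M[\TT]/(\mathfrak{L}M+\mathfrak{L'})\simeq (M[\TT]/\mathfrak{L}M)/(\mathfrak{L}_M/\mathfrak{L}M)\simeq M[\TT]/\mathfrak{L}_M\simeq H_0(\ZC_\bullet(\ff;M))$, which is the assertion. There is no genuine obstacle beyond bookkeeping: the single point that requires care rather than an idea is to keep straight which submodule of $M[\TT]$ is generated by which set of linear forms, and in particular not to conflate $\mathfrak{L}\otimes_R M$ with its image $\mathfrak{L}M$ in $M[\TT]$, since the presentation sequence of $\Sym_R(I)$ need not stay exact after tensoring with $M$ — which is precisely why the extra term $\mathfrak{L'}$, carrying $\Tor_1^R(R/I,M)$, appears.
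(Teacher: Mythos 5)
Your argument reproduces the paper's own derivation step for step: start from the presentation $H_0(\ZC_\bullet(\ff;M))\simeq M[\TT]/\mathfrak{L}_M$ of \eqref{Formula2}, use \eqref{Formula3} to identify $\Sym_R(I)\otimes_R M\simeq M[\TT]/\mathfrak{L}M$, read off $\mathfrak{L}_M/\mathfrak{L}M\simeq\mathfrak{L'}$ from the right-exact sequence \eqref{Sequence3.3} computing $\Tor_1^R(R/I,M)$, and conclude by the third isomorphism theorem. The proposal is correct and matches the paper's proof, including the (important) caution not to confuse $\mathfrak{L}\otimes_R M$ with its image $\mathfrak{L}M$ in $M[\TT]$.
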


\begin{pro}  \label{Proposition2.6}
Let $R$ be a Noetherian ring and let $I=(f_1,\ldots,f_r)$ be an ideal of $R. $ Assume that $M$ is a finitely generated $R$-module. Then there exists a narural epimorphism
\begin{align*}
\xymatrix{\varphi: \Sym_R(I)\otimes_R M \ar@{->>}[rr] &&H_0(\ZC_\bullet(\ff;M)), }
\end{align*}
that equals $H_0(\ZC_\bullet(\ff;R))\simeq \Sym_R(I)$ when $M=R.$
Furthermore, $\varphi$ is an isomorphism if and only if $\Tor_1^R(R/I,M)=0.$
\end{pro}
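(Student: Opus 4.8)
The plan is to produce the map $\varphi$ by functoriality of tensor product and then identify both source and target as explicit quotients of the polynomial module $M[T_1,\ldots,T_r]$, using the computations set up just above the statement. First I would recall from Formula~(\ref{Formula3}) the isomorphism $\Sym_R(I)\otimes_R M\simeq M[T_1,\ldots,T_r]/\mathfrak{L}M$, where $\mathfrak{L}M$ is the image of $\theta\otimes 1_M$, that is, the submodule of $M[T_1,\ldots,T_r]$ spanned by the linear forms $c_1T_1+\cdots+c_rT_r$ with $(c_1,\ldots,c_r)\in\Im(\delta\otimes 1_M)$. On the other hand, by the definition of the approximation complex recalled in~(\ref{Formula2}), $H_0(\ZC_\bullet(\ff;M))\simeq M[T_1,\ldots,T_r]/\mathfrak{L}_M$, where $\mathfrak{L}_M$ is spanned by the same type of linear forms but with $(c_1,\ldots,c_r)$ ranging over the full module of Koszul $1$-cycles $Z_1^M$. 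Since $\Im(\delta\otimes 1_M)\subset Z_1^M$ (as noted in the excerpt, $\delta$ is induced by the inclusion $Z_1\hookrightarrow R^r$), we get $\mathfrak{L}M\subset\mathfrak{L}_M$, hence a natural surjection
\begin{align*}
\varphi\colon\ \Sym_R(I)\otimes_R M\ \simeq\ M[T_1,\ldots,T_r]/\mathfrak{L}M\ \twoheadrightarrow\ M[T_1,\ldots,T_r]/\mathfrak{L}_M\ \simeq\ H_0(\ZC_\bullet(\ff;M)).
\end{align*}
Naturality in $M$ follows because every identification used (the presentation of $\Sym_R(I)$, the maps $\theta$, $\delta$, and the reduction mod $m^n$ / colimit defining the complexes) is functorial; alternatively one can characterize $\varphi$ as the unique $S$-algebra-module map sending $1\otimes m$ to the class of $m$.

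For the case $M=R$: then $\delta\otimes 1_R=\delta$ has image exactly the submodule of $R^r$ generated by $Z_1$, which is $Z_1$ itself, so $\Im(\delta\otimes 1_R)=Z_1=Z_1^R$, whence $\mathfrak{L}R=\mathfrak{L}=\mathfrak{L}_R$ and $\varphi$ is the identity, recovering $H_0(\ZC_\bullet(\ff;R))\simeq\Sym_R(I)$ from~(\ref{Formula3}).

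For the last assertion, $\varphi$ is an isomorphism precisely when $\mathfrak{L}M=\mathfrak{L}_M$ as submodules of $M[T_1,\ldots,T_r]$. The kernel of $\varphi$ is $\mathfrak{L}_M/\mathfrak{L}M$, which by the discussion preceding Proposition~\ref{Proposition2.5} is exactly $\mathfrak{L}'$, the submodule generated by the linear forms attached to generators of $\Tor_1^R(R/I,M)=Z_1^M/\Im(\delta\otimes 1_M)$ via the exact sequence~(\ref{Sequence3.3}). Thus if $\Tor_1^R(R/I,M)=0$ then $Z_1^M=\Im(\delta\otimes 1_M)$, so $\mathfrak{L}_M=\mathfrak{L}M$ and $\varphi$ is injective. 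Conversely, if $\varphi$ is an isomorphism then $\mathfrak{L}_M=\mathfrak{L}M$; comparing degree-one graded pieces in the $T$-grading, the degree-one part of $\mathfrak{L}_M$ is (the image under $(c_i)\mapsto\sum c_iT_i$ of) $Z_1^M$ and that of $\mathfrak{L}M$ is $\Im(\delta\otimes 1_M)$, and since $(c_1,\ldots,c_r)\mapsto c_1T_1+\cdots+c_rT_r$ is an injection of $M^r$ into the degree-one component of $M[T_1,\ldots,T_r]$, equality forces $Z_1^M=\Im(\delta\otimes 1_M)$, i.e. $\Tor_1^R(R/I,M)=0$.

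The only genuinely delicate point is the verification of naturality of $\varphi$ and the careful bookkeeping that $\ker\varphi=\mathfrak{L}'$; everything else is a direct unwinding of the definitions and the exact sequences~(\ref{Sequence3.3}) and~(\ref{Formula3}) already established. Since these were proved in the excerpt immediately before the statement ("Thus we have already proved the following"), the proof is essentially an assembly of those pieces, and I would expect it to be short.
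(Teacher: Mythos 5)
Your proof is correct and follows essentially the same route as the paper: identify both sides with quotients of $M[T_1,\ldots,T_r]$ by $\mathfrak{L}M\subset\mathfrak{L}_M$ via the already-established formulas, obtain $\varphi$ as the natural projection with kernel $\mathfrak{L}_M/\mathfrak{L}M$, and read off the $\Tor_1$ criterion; you merely make explicit the degree-one comparison for the converse direction, which the paper leaves implicit.
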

\begin{proof}
As $\mathfrak{L}M\subset \mathfrak{L}_M,$ we can define an epimorphism 
\begin{align*}
\xymatrix{\varphi: \Sym_R(I)\otimes_R M  \ar@{->>}[rr] &&  H_0(\ZC_\bullet(\ff;M)),}
\end{align*}
by \eqref{Formula2} and \eqref{Formula3}. Moreover, the kernel of $\varphi$ is isomorphic to $\mathfrak{L}_M/\mathfrak{L}M.$  Thus $\Tor_1^R(R/I,M)=0$ if and only if  $\varphi$ is an isomorphism. 
\end{proof}

% #################################section 3#######################

\section{Residual approximation complexes} \label{Section3}
Assume that $R$ is a Noetherian ring of dimension $d,\ I=(\ff)=(f_1,\ldots,f_r)$ is an ideal of height $g.$ Let $\ia = (a_1,\ldots, a_s)$ be an ideal contained in $I$ with  $s\geq g.$ Set $J=\ia\colon_RI,\ S=R[T_1,\ldots,T_r]$ and  $\ig:= (T_1,\ldots,T_r).$ We write $a_i=\sum_{j=1}^{r}c_{ji}f_j,$ and $\gamma_i=\sum_{j=1}^{r}c_{ji}T_j.$ Notice that the $\gamma_i$'s depend on how one expresses the $a_i$'s as a linear combination of the $f_i$'s.  Set $\gamma=\gamma_1,\ldots,\gamma_s.$ Finally, for a graded module $N,$ we define $\ennd(N):=\sup \{\mu \mid N_\mu\neq 0\}$ and $\indeg(N):=\inf \{\mu \mid N_\mu\neq 0\}.$

\smallskip
Let $M$ be a finitely generated $R$-module.  We denote by  $\ZC_\bullet(\ff;M)$  the approximation complex associated to $\ff$ with coefficients in $M$ and by $K_\bullet(\gamma;S)$ the Koszul complex associated to $\gamma$ with coefficients in $S.$  Let $\D_\bullet^M=\Tot(\ZC_\bullet(\ff;M)\otimes_S K_\bullet(\gamma;S)).$ Then,
\begin{equation*}
\D_i^M=\bigoplus_{j=i-s}^{i}(Z_j^M\otimes_R S)^{\binom{s}{i-j}}(-i),
\end{equation*}
with  $Z_j^M=0$ for $j<0$ or $j>r$, and for $j=r$ unless $\depth_M(I)=0$. 

\smallskip
In what follows, we assume that $\depth_M(I)>0,$ (hence $Z_r^M =0$), in order that the complexes we construct have length $s.$

\smallskip
We recall that the $k$-th graded component of a graded $S$-module $N$ is denoted by $N_{[k]}.$ We have $(\D_i^M)_{[k]}=0$ for all $k<i.$ Consequently, the complex $(\D_\bullet^M)_{[k]}$ is
\begin{align*}
\xymatrix{0\ar[r]& (\D_k^M)_{[k]}\ar[r]&(\D_{k-1}^M)_{[k]}  \ar[r] & \cdots\ar[r]& (\D_0^M)_{[k]}\ar[r]&0}.
\end{align*}

\smallskip
The \v Cech complex of $S$
with respect to the ideal $\ig=(T_1,\ldots, T_r)$ is denoted by $C_\ig^\bullet=C_\ig^\bullet(S).$

\smallskip

We now consider the double complex $C_\ig^\bullet \otimes_S \D_\bullet^M$ that gives rise to two spectral sequences. The second terms of the horizonal spectral are
$$^2\textbf{E}_{\hor}^{-i,-j}=H_\ig^j(H_i(\D_\bullet^M))$$
and the first terms of the vertical spectral are
\begin{small}
\begin{align*}
^1\textbf{E}_{\ver}^{-\bullet ,-j}=\begin{cases}
\xymatrix{0\ar[r]& H_\ig^r(\D_{r+s-1}^M)\ar[r]&\cdots \ar[r] & H_\ig^r(\D_1^M)\ar[r]&H_\ig^r(\D_0^M)\ar[r]& 0}\; \text{if} \; j=r\\ \; 0 \quad \text{otherwise}
\end{cases}
\end{align*}
\end{small}
and
\begin{equation*}
H_\ig^r(\D_i^M)\simeq \bigoplus_{j=i-s}^{i}(Z_j^M\otimes_R H_\ig^r(S))^{\binom{s}{i-j}}(-i)
\end{equation*}
by \cite[Lemma~2.1]{Ha12}. Since $\ennd (H_\ig^r(S))=-r,$  it follows that $\ennd(H_\ig^r(\D_i^M))=i-r$ if $\D_i^M \not= 0$, thus $H_\ig^r(\D_i^M)_{[i-r+j]}=0,$ for all $j\geq 1.$ Hence, the $k$-th graded component of $^1\textbf{E}_{\ver}^{-\bullet ,-r}$ is the complex:
\begin{align*}
\xymatrix{0\ar[r]& H_\ig^r(\D_{r+s-1}^M)_{[k]}\ar[r]&\cdots \ar[r] & H_\ig^r(\D_{r+k+1}^M)_{[k]}\ar[r]&H_\ig^r(\D_{r+k}^M)_{[k]}\ar[r]& 0.}
\end{align*}

\smallskip
Comparison of the spectral sequences for the two filtrations leads to the definition of the complex of lenght $s$:
\begin{align*}
_k^M\!\ZC_\bullet^+: \xymatrix{0\ar[r]& _k^M\!\ZC_s^+\ar[r]&\cdots \ar[r] & _k^M\!\ZC_{k+1}^+\ar[r]^{\tau_k}&_k^M\!\ZC_{k}^+\ar[r]&\cdots\ar[r]&_k^M\!\ZC_0^+ \ar[r]&0}
\end{align*}
wherein
\begin{equation*} 
_k^M\!\ZC_i^+=\begin{cases}
(\D_i^M)_{[k]}\qquad\quad \quad i\leq \min\{k,s\}\\ 
H_\ig^r(\D_{r-1+i}^M)_{[k]}\;\quad i>k.
\end{cases}
\end{equation*}
and the morphism $\tau_k$ is defined through the transgression. Notice that $_k^M\!\ZC_\bullet^+$ is a direct generalization  of the complex  $_k\ZC_\bullet^+$ in \cite[Section 2.1]{HN16}. 

\smallskip
Since $H_\ig^r(M\otimes_R S)\simeq M\otimes_R H_\ig^r(S)$ for any $R$-module $M, \; _k^M\!\ZC_\bullet^+$ have, like graded strands of $\D_\bullet^M,$ components that are direct sums of Koszul cycles of $K_\bullet(\ff;M).$

\smallskip
The structure of $_k^M\!\ZC_\bullet^+$ is depending on the generating sets of $I,$ on the expression of the generators of $\ia$ in terms of the generators of $I$ and on $M.$ The complex $_k^R\!\ZC_\bullet^+$ considered by Hassanzadeh and the second named author in \cite{HN16}, will be denoted by $_k\ZC_\bullet^+$ instead of $_k^R\!\ZC_\bullet^+.$ 
\begin{defi}
The complex $_k^M\!\ZC_\bullet^+$ is called the $k$-th \textit{residual approximation complex} of $J=\ia\colon_R I$ with coefficients in $M.$
\end{defi}

We consider the morphism 
\begin{align*}
\xymatrix{ M[T_1,\ldots, T_r]^s(-1)\simeq M\otimes_R S^s(-1)\ar[rr]^{\qquad 1_M\otimes \partial_1^\gamma }&& M\otimes_R S\simeq M[T_1,\ldots, T_r],}
\end{align*}
where $\partial_1^\gamma$ is the first differential of $K_\bullet(\gamma;S),$
and denote by $\gamma M$ the image of $1_M\otimes \partial_1^\gamma.$ It is the submodule of $M[T_1,\ldots,T_r]$ generated by the linear forms $\gamma_1,\ldots,\gamma_s.$ Recall from Section~\ref{Section2} that we set $\mathfrak{L}$ for the defining ideal of $\Sym_R(I)$ in $S$ and $\mathfrak{L'}$ for the module spanned by the linear forms correspond to generators of $\Tor_1^R(R/I,M).$
\begin{pro} \label{Proposition3.1}
 Let $R$ be a Noetherian ring and let $\ia\subset I$ be two ideals of $R.$ Suppose that $M$ is a finitely generated $R$-module. Then
 $$H_0(\D_\bullet^M)\simeq M[T_1,\ldots,T_r]/(\mathfrak{L}M+\mathfrak{L'} + \gamma M)$$
 and for all $k\geq 1,$ 
$$H_0(_k^M\!\ZC_\bullet^+) \simeq M[T_1,\ldots,T_r]_{[k]}/(\mathfrak{L}M+\mathfrak{L'} + \gamma M)_{[k]}.$$
\end{pro}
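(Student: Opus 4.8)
The plan is to compute $H_0$ directly from the definitions of the complexes involved, reducing everything to the already-established Proposition~\ref{Proposition2.5}. First I would observe that $\D_\bullet^M = \Tot(\ZC_\bullet(\ff;M)\otimes_S K_\bullet(\gamma;S))$, so $H_0(\D_\bullet^M)$ is the cokernel of the map
\begin{displaymath}
\D_1^M = \left(Z_1^M\otimes_R S\right)(-1)\oplus \left(Z_0^M\otimes_R S\right)^s(-1)\longrightarrow \D_0^M = M[T_1,\ldots,T_r],
\end{displaymath}
whose two components are the differential $\partial_M^{\TT}$ of the approximation complex on the first summand and the map $1_M\otimes\partial_1^\gamma$ (multiplication by the linear forms $\gamma_1,\ldots,\gamma_s$) on the second. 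The image of the first component is precisely the submodule of $M[T_1,\ldots,T_r]$ spanned by linear forms $c_1T_1+\cdots+c_rT_r$ with $(c_1,\ldots,c_r)\in Z_1^M$, which by \eqref{Formula2} and the discussion preceding Proposition~\ref{Proposition2.5} equals $\mathfrak{L}M+\mathfrak{L'}$; the image of the second is $\gamma M$ by definition. Hence $H_0(\D_\bullet^M)\simeq M[T_1,\ldots,T_r]/(\mathfrak{L}M+\mathfrak{L'}+\gamma M)$, which is the first claim. Equivalently, $H_0(\D_\bullet^M)\simeq H_0(\ZC_\bullet(\ff;M))/\gamma\cdot H_0(\ZC_\bullet(\ff;M))$, combining Proposition~\ref{Proposition2.5} with the fact that tensoring with the Koszul complex $K_\bullet(\gamma;S)$ and taking $H_0$ amounts to killing the $\gamma_i$.

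For the graded part, I would use the description of $_k^M\!\ZC_\bullet^+$ from the construction: for $i\leq\min\{k,s\}$ one has $_k^M\!\ZC_i^+=(\D_i^M)_{[k]}$, and in particular for $k\geq 1$ the bottom two terms $_k^M\!\ZC_1^+\to{}_k^M\!\ZC_0^+$ are exactly $(\D_1^M)_{[k]}\to(\D_0^M)_{[k]}$ with the same differential (the $H_\ig^r$-terms only appear in homological degrees $>k$, so they do not affect $H_0$). Therefore $H_0(_k^M\!\ZC_\bullet^+)$ is the degree-$k$ component of $H_0(\D_\bullet^M)$, i.e.
\begin{displaymath}
H_0(_k^M\!\ZC_\bullet^+)\simeq \left(M[T_1,\ldots,T_r]/(\mathfrak{L}M+\mathfrak{L'}+\gamma M)\right)_{[k]}\simeq M[T_1,\ldots,T_r]_{[k]}/(\mathfrak{L}M+\mathfrak{L'}+\gamma M)_{[k]},
\end{displaymath}
the last isomorphism because taking the $k$-th graded strand is exact on graded $S$-modules.

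The one point requiring genuine care — and the main obstacle — is justifying that the transgression map $\tau_k$ and the spectral-sequence comparison do not introduce extra relations or kernel in homological degree $0$: one must check that in the range $i\leq k$ the complex $_k^M\!\ZC_\bullet^+$ genuinely coincides with $(\D_\bullet^M)_{[k]}$ as a complex, not merely termwise. This follows from how $_k^M\!\ZC_\bullet^+$ was assembled from the two spectral sequences of $C_\ig^\bullet\otimes_S\D_\bullet^M$: the vertical spectral sequence has $H_\ig^j=0$ for $j\neq r$, and $H_\ig^r(\D_i^M)_{[k]}=0$ once $k>i-r$, so in degrees $\leq k$ the only surviving contributions come from the $\D_i^M$ themselves with their original differentials. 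I would spell this out just enough to conclude that the truncation at the bottom is honest, and then the identification of images above gives the result. Since this bookkeeping is essentially the same as in \cite[Section 2.1]{HN16} for $M=R$, I would cite that construction and only indicate the (trivial) changes needed to carry coefficients in $M$.
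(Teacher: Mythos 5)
Your proof is correct and follows essentially the same route as the paper's: identify $H_0(\D_\bullet^M)$ as the cokernel of the two-component differential $\D_1^M\to\D_0^M$ and invoke Proposition~\ref{Proposition2.5}, then observe that for $k\geq 1$ the bottom strand of $_k^M\!\ZC_\bullet^+$ is just $(\D_\bullet^M)_{[k]}$ so that $H_0(_k^M\!\ZC_\bullet^+)=H_0(\D_\bullet^M)_{[k]}$. The extra discussion of the transgression is a fair unpacking of the paper's terse claim, though as you note it is automatic since the $H_\ig^r$-terms only enter in homological degrees $>k\geq 1$.
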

\begin{proof}
The first isomorphism follows from the definition of $\D_\bullet^M$ and Proposition~\ref{Proposition2.5}. The last isomorphism is a consequence of the fact that, for all $k\geq~1,\, H_0(_k^M\!\ZC_\bullet^+)=H_0(\D_\bullet^M)_{[k]}$ is the $k$-th graded component of $H_0(\D_\bullet^M).$
\end{proof}

\begin{pro}\label{Proposition3.3}
Let $R$ be a Noetherian ring and let $\ia\subset I$ be two ideals of $R. $ Assume that $M$ is a finitely generated $R$-module. Then, for all $k\geq 1,$ there exists a natural epimorphism
\begin{align*}
\xymatrix{\psi: \Sym_R^k(I/\ia)\otimes_R M \ar@{->>}[rr] &&H_0(_k^M\!\ZC_\bullet^+). }
\end{align*}
Furthermore, $\psi$ is an isomorphism if $\Tor_1^R(R/I,M)=0.$
\end{pro}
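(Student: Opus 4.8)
The plan is to deduce this statement directly from the structural results already established, namely Proposition~\ref{Proposition3.1} together with the description of $\Sym_R(I)\otimes_R M$ in terms of $\mathfrak{L}M$ from Section~\ref{Section2}. First I would recall from \eqref{Formula3} that $\Sym_R(I)\otimes_R M\simeq M[T_1,\ldots,T_r]/\mathfrak{L}M$, and that quotienting further by the image of $\gamma M$ gives $\Sym_R(I/\ia)\otimes_R M\simeq M[T_1,\ldots,T_r]/(\mathfrak{L}M+\gamma M)$, since $I/\ia$ is presented by adjoining the relations $\gamma_1,\ldots,\gamma_s$ to the symmetric algebra of $I$ (here one uses that $\Sym_R$ is right exact, so that $\Sym_R(I/\ia)=\Sym_R(I)/(\gamma_1,\ldots,\gamma_s)\Sym_R(I)$, and then tensor with $M$). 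Taking $k$-th graded components, $\Sym_R^k(I/\ia)\otimes_R M\simeq M[T_1,\ldots,T_r]_{[k]}/(\mathfrak{L}M+\gamma M)_{[k]}$. Comparing with Proposition~\ref{Proposition3.1}, which identifies $H_0(_k^M\!\ZC_\bullet^+)$ with $M[T_1,\ldots,T_r]_{[k]}/(\mathfrak{L}M+\mathfrak{L'}+\gamma M)_{[k]}$, the natural surjection $\psi$ is simply the quotient map induced by the inclusion $(\mathfrak{L}M+\gamma M)_{[k]}\subset(\mathfrak{L}M+\mathfrak{L'}+\gamma M)_{[k]}$.

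For the second assertion, I would observe that by construction $\mathfrak{L'}$ is spanned by the linear forms attached to a generating set of $\Tor_1^R(R/I,M)$ (see the discussion preceding Proposition~\ref{Proposition2.5}); hence if $\Tor_1^R(R/I,M)=0$ then $\mathfrak{L'}=0$ inside $M[T_1,\ldots,T_r]/\mathfrak{L}M$, i.e. $\mathfrak{L}_M=\mathfrak{L}M$. In that case the two presenting submodules coincide, $(\mathfrak{L}M+\gamma M)_{[k]}=(\mathfrak{L}M+\mathfrak{L'}+\gamma M)_{[k]}$, and $\psi$ is an isomorphism. This mirrors exactly the argument of Proposition~\ref{Proposition2.6}, only now carried out degree by degree after the additional quotient by $\gamma M$; one may even phrase it by tensoring the epimorphism of Proposition~\ref{Proposition2.6} with $R/\ia$-type relations, or more cleanly, by taking the $k$-th graded strand of the surjection $\Sym_R(I)\otimes_R M\twoheadrightarrow H_0(\D_\bullet^M)/(\gamma M)\cdot(-)$ and identifying $H_0(\D_\bullet^M)$ via Proposition~\ref{Proposition3.1}.

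The only genuinely delicate point — and the step I expect to require the most care — is the compatibility of the $\gamma M$-quotient with the symmetric-power grading: one must check that the submodule $\gamma M\subset M[T_1,\ldots,T_r]$ generated by the \emph{linear} forms $\gamma_1,\ldots,\gamma_s$ does indeed cut out $\Sym_R(I/\ia)\otimes_R M$ from $\Sym_R(I)\otimes_R M$, i.e. that the natural map $\Sym_R(I)/(\gamma_1,\ldots,\gamma_s)\to\Sym_R(I/\ia)$ is an isomorphism and remains so after $\otimes_R M$. This is the standard right-exactness property of the symmetric algebra functor applied to the surjection $I\twoheadrightarrow I/\ia$ with kernel $\ia$ generated by the elements whose images in $\Sym_R(I)_{[1]}=I$ are the $\gamma_i$; it is routine but worth stating explicitly since the $\gamma_i$ depend on the chosen expressions $a_i=\sum_j c_{ji}f_j$. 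Once this identification is in hand, both claims of Proposition~\ref{Proposition3.3} follow formally by comparing the two quotient presentations, exactly as in the proof of Proposition~\ref{Proposition3.1}.
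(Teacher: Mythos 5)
Your proposal is correct and follows essentially the same route as the paper: establish $\Sym_R(I/\ia)\otimes_R M\simeq M[T_1,\ldots,T_r]/(\mathfrak{L}M+\gamma M)$ via right exactness of $\Sym_R$ and tensoring with $M$, then define $\psi$ degree by degree via the inclusion $(\mathfrak{L}M+\gamma M)\subset(\mathfrak{L}M+\mathfrak{L'}+\gamma M)$ and compare with Proposition~\ref{Proposition3.1}, and finally use the equivalence $\Tor_1^R(R/I,M)=0\Leftrightarrow\mathfrak{L}_M=\mathfrak{L}M$ for the isomorphism claim. The "delicate point" you flag is exactly what the paper settles with the commutative diagram built from the presentation $\mathfrak{L}\oplus(\gamma)\to S\to\Sym_R(I/\ia)\to 0$, so your argument matches the paper in both substance and structure.
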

\begin{proof}
As $\Sym_R(I/\ia)\simeq \Sym_R(I)/\ia\Sym_R(I)\simeq S/(\mathfrak{L}+(\gamma)),$  we have an exact sequence
\begin{displaymath}
\xymatrix{ \mathfrak{L} \oplus (\gamma)\ar[r]^{\quad\alpha }& S\ar[r]  & \Sym_R(I/\ia)\ar[r] & 0}
\end{displaymath}
which provides  a commutative diagram, with exact rows
\begin{displaymath}
\xymatrix{ (\mathfrak{L}\oplus(\gamma))\otimes_R M \ar[rr]^{\quad\qquad\alpha\otimes 1_M\quad } \ar[d]_{\simeq} && M[\TT]\ar[r] \ar[d]^{=}  & \Sym_R(I/\ia)\otimes_R M\ar[r]\ar[d]^{=} & 0\\
\mathfrak{L}\otimes_R M\oplus(\gamma)\otimes_R M \ar[rr]^{\quad\qquad\theta\otimes 1_M\oplus \beta\otimes 1_M}&& M[\TT]\ar[r]  & \Sym_R(I/\ia)\otimes_R M\ar[r] & 0}
\end{displaymath}
where $\beta$ is the inclusion $(\gamma)\hookrightarrow S$ and hence $\Im(\beta\otimes 1_M) =\gamma M.$ It follows that
$$\Sym_R(I/\ia)\otimes_R M\simeq M[\TT]/\Im(\alpha\otimes 1_M)\simeq M[T_1,\ldots,T_r]/(\mathfrak{L}M + \gamma M).$$

The natural onto map 
\begin{align*}
\xymatrix{  M[T_1,\ldots,T_r]/(\mathfrak{L}M +\gamma M) \ar@{->>}[rr] && M[T_1,\ldots,T_r]/(\mathfrak{L}M+\mathfrak{L'} + \gamma M)  }
\end{align*}
provides an epimorphism, for all $k\geq 1,$
\begin{align*}
\xymatrix{ \psi:\Sym_R^k(I/\ia)\otimes_R M \ar@{->>}[rr] &&H_0(_k^M\!\ZC_\bullet^+)  }
\end{align*}
by Proposition~\ref{Proposition3.1}. Moreover, $\Tor_1^R(R/I,M)=0$ is equivalent to $\mathfrak{L}_M=\mathfrak{L}M.$ Thus  $\psi$ is an isomorphism if $\Tor_1^R(R/I,M)=0.$
\end{proof}

\begin{lem}\label{Lemma3.4}
Let $M$ be a module over a ring $R.$ Suppose that $N$ is a quotient of $M[T_1,\ldots,T_r],$ with $T_i's$ indeterminates of degree 1, by a graded submodule. Then, for all $k\geq 1,$
\begin{displaymath}
\ann_R(N_k) \subset \ann_R(N_{k+1}).
\end{displaymath}
\end{lem}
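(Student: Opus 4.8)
The plan is to prove the inclusion $\ann_R(N_k)\subset\ann_R(N_{k+1})$ by exploiting the graded module structure, namely that multiplication by the degree-$1$ variables $T_1,\dots,T_r$ carries $N_k$ onto $N_{k+1}$. First I would observe that, since $N$ is a quotient of $M[T_1,\dots,T_r]$ by a graded submodule, every homogeneous element of $N$ of degree $k+1$ is an $R$-linear combination of elements of the form $T_i\xi$ with $\xi\in N_k$: indeed, in $M[T_1,\dots,T_r]$ the degree-$(k+1)$ part is spanned over $R$ by monomials $T_{i_1}\cdots T_{i_{k+1}}m$ with $m\in M$, each of which is $T_{i_1}$ times a degree-$k$ monomial, and this description passes to the quotient $N$. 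Hence $N_{k+1}=\sum_{i=1}^r T_i\cdot N_k$ as $R$-modules.

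Then the key step is almost immediate: if $a\in\ann_R(N_k)$, then for any $\xi\in N_k$ and any $i$ we have $a(T_i\xi)=T_i(a\xi)=T_i\cdot 0=0$, using that $a\in R=R[T_1,\dots,T_r]_0$ commutes with the $T_i$'s and that the $R$-action on $N$ is by homogeneous degree-$0$ operators. Since the elements $T_i\xi$ generate $N_{k+1}$ over $R$, it follows that $aN_{k+1}=0$, i.e. $a\in\ann_R(N_{k+1})$. This gives the desired containment.

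I do not expect any real obstacle here; the only point requiring a word of care is the justification that $N_{k+1}$ is generated over $R$ by the products $T_i\xi$, which is where the hypotheses $k\geq 1$ and ``$T_i$ of degree $1$'' and ``quotient by a \emph{graded} submodule'' enter. One should note that the statement can fail in degree $0$ (there is no reason for $\ann_R(N_0)\subset\ann_R(N_1)$), which is consistent with the hypothesis $k\geq 1$; in positive degrees every generator genuinely involves at least one $T_i$. With that remark in place the proof is complete.

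\begin{proof}
Write $P:=M[T_1,\dots,T_r]$, graded with $\deg T_i=1$ and $\deg m=0$ for $m\in M$, so that $N=P/L$ for some graded $R$-submodule $L\subset P$. Fix $k\geq 1$. In $P$, the homogeneous component $P_{k+1}$ is generated as an $R$-module by the monomials $T_{i_1}\cdots T_{i_{k+1}}m$ with $m\in M$; each such monomial equals $T_{i_1}\cdot(T_{i_2}\cdots T_{i_{k+1}}m)$ with $T_{i_2}\cdots T_{i_{k+1}}m\in P_k$. Passing to the quotient $N=P/L$, we conclude that
\begin{displaymath}
N_{k+1}=\sum_{i=1}^r T_i\cdot N_k
\end{displaymath}
as $R$-submodules of $N$. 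Now let $a\in\ann_R(N_k)$. For every $\xi\in N_k$ and every $i\in\{1,\dots,r\}$, since $a\in R$ acts as a degree-preserving operator commuting with multiplication by $T_i$, we get $a\,(T_i\xi)=T_i(a\xi)=0$. As the elements $T_i\xi$ generate $N_{k+1}$ over $R$, this yields $a\,N_{k+1}=0$, i.e. $a\in\ann_R(N_{k+1})$. Hence $\ann_R(N_k)\subset\ann_R(N_{k+1})$.
\end{proof}
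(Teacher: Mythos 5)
Your proof is correct and is essentially the paper's argument: the paper packages the observation $N_{k+1}=\sum_i T_i N_k$ as the surjectivity of the induced map $\widetilde{\vartheta}_k\colon N_k^r\twoheadrightarrow N_{k+1}$ coming from $(g_1,\dots,g_r)\mapsto\sum_i g_iT_i$, and then applies the same commutativity computation. One small inaccuracy in your prefatory discussion: the argument in fact also works for $k=0$ (every degree-$1$ monomial is $T_i m$ with $m\in N_0$), so the claim that the inclusion ``can fail in degree $0$'' is not justified; the hypothesis $k\geq 1$ is simply all the authors need, not a sharp boundary.
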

\begin{proof}
We consider a graded $S$-homomorphism of degree zero
\begin{align*}
\xymatrix@R=6pt{\vartheta: M[T_1,\ldots,T_r]^r(-1)\ar[r]&M[T_1,\ldots,T_r] \\ 	\qquad (g_1,\ldots, g_r) \ar@{|->}[r]& \sum_{i=1}^{r}g_iT_i}. 
\end{align*}
	
\smallskip
Then $\vartheta$ provides the  epimorphisms of $R$-modules, for all $k\geq 1,$
\begin{align*}
\xymatrix{\widetilde{\vartheta}_k: N_k^r\ar@{->>}[r]& N_{k+1}.}
\end{align*}
	
\smallskip
We will show that $\ann_R(N_k)\subset \ann_R(N_{k+1}),$ for all $k\geq 1.$ Let $ a\in \ann_R(N_k)$ and $u\in N_{k+1}.$  We have to show that $au=0.$ Since $\widetilde{\vartheta}_k$ is surjective, there exist $g_1,\ldots,g_r\in N_k$ such that $u=\widetilde{\vartheta}_k(g_1,\ldots,g_r).$ Therefore,
$$au=a\widetilde{\vartheta}_k(g_1,\ldots,g_r)=\widetilde{\vartheta}_k(ag_1,\ldots,ag_r)=\widetilde{\vartheta}_k(0,\ldots,0)=0.$$
\end{proof}

\begin{pro}\label{Proposition3.5}
Let $R$ be a Noetherian ring and let $\ia\subset I$ be two ideals of $R. $ Assume that $M$ is a finitely generated $R$-module. Then $J=\ia\colon_R I$ annihilates  $H_0(_k^M\!\ZC_\bullet^+),$ for all $k\geq 1.$
\end{pro}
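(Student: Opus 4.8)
The plan is to combine the explicit description of $H_0(_k^M\!\ZC_\bullet^+)$ from Proposition~\ref{Proposition3.1} with the monotonicity of annihilators from Lemma~\ref{Lemma3.4}, after first handling the case $M=R$ directly and then bootstrapping to general $M$. Concretely, by Proposition~\ref{Proposition3.1} we have, for all $k\geq 1$,
\[
H_0(_k^M\!\ZC_\bullet^+)\simeq M[T_1,\ldots,T_r]_{[k]}/(\mathfrak{L}M+\mathfrak{L'}+\gamma M)_{[k]},
\]
and the module $N:=M[T_1,\ldots,T_r]/(\mathfrak{L}M+\mathfrak{L'}+\gamma M)$ is a quotient of $M[T_1,\ldots,T_r]$ by a graded submodule, so Lemma~\ref{Lemma3.4} applies: $\ann_R(N_k)\subset\ann_R(N_{k+1})$ for all $k\geq 1$. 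Hence it suffices to prove that $J$ annihilates $N_1=H_0(_1^M\!\ZC_\bullet^+)$, and then monotonicity propagates the statement to all $k\geq 1$.

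First I would treat the base case. We have $N_1\simeq M\otimes_R S_{[1]}/(\mathfrak{L}M+\mathfrak{L'}+\gamma M)_{[1]}$. Since $(\mathfrak{L}+\mathfrak{L'})_{[1]}$ corresponds precisely to the linear syzygies of $(f_1,\ldots,f_r)$ on $M$, one identifies $M\otimes_R S_{[1]}/(\mathfrak{L}M+\mathfrak{L'})_{[1]}\simeq I\otimes_R M$ via $T_i\mapsto f_i$ (this is the degree-one part of $\Sym_R(I)\otimes_R M$, cf. the discussion before Proposition~\ref{Proposition2.5} and the proof of Proposition~\ref{Proposition3.3}); quotienting further by $(\gamma M)_{[1]}$ gives $N_1\simeq (I/\ia)\otimes_R M$. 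So the claim reduces to: $J$ annihilates $(I/\ia)\otimes_R M$. But $J=\ia\colon_R I$ satisfies $JI\subset\ia$, hence $J$ annihilates $I/\ia$, hence it annihilates $(I/\ia)\otimes_R M$ as well. This settles $k=1$.

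The remaining work is purely bookkeeping: invoke Lemma~\ref{Lemma3.4} with $N$ as above to conclude $J\subset\ann_R(N_1)\subset\ann_R(N_k)$ for every $k\geq 1$, and translate back through the isomorphism $N_k\simeq H_0(_k^M\!\ZC_\bullet^+)$ of Proposition~\ref{Proposition3.1}. I do not expect a genuine obstacle here; the only point requiring a little care is the identification of $N_1$ with $(I/\ia)\otimes_R M$ — one must be sure that the generators contributed by $\mathfrak{L'}$ (the linear forms coming from $\Tor_1^R(R/I,M)$) together with those from $\mathfrak{L}M$ account for exactly the image of $Z_1^M$ in $M^r$ in degree one, so that the degree-one quotient is the full $I\otimes_R M$ rather than some intermediate module. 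This is exactly what the exact sequences \eqref{Sequence3.3} and the paragraph around Proposition~\ref{Proposition2.5} provide, so the argument goes through cleanly. An alternative, slightly slicker route that avoids even this identification: apply Proposition~\ref{Proposition3.3} to get an epimorphism $\Sym_R^k(I/\ia)\otimes_R M\twoheadrightarrow H_0(_k^M\!\ZC_\bullet^+)$ for $k\geq 1$, and note that $J$ kills $I/\ia$, hence kills $\Sym_R^k(I/\ia)$, hence kills $\Sym_R^k(I/\ia)\otimes_R M$, hence kills its quotient $H_0(_k^M\!\ZC_\bullet^+)$ — this dispenses with Lemma~\ref{Lemma3.4} entirely, though presenting both viewpoints may be worthwhile.
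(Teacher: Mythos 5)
Your proposal is correct in spirit, and the alternative route you sketch at the end is essentially the paper's own argument; let me address both.

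The \emph{main} route has a small but real misstep in the base case. You claim $M\otimes_R S_{[1]}/(\mathfrak{L}M+\mathfrak{L'})_{[1]}\simeq I\otimes_R M$ and call this ``the degree-one part of $\Sym_R(I)\otimes_R M$.'' That is not quite right: the degree-one part of $\Sym_R(I)\otimes_R M$ is $M^r/(\mathfrak{L}M)_{[1]}\simeq I\otimes_R M$, whereas $(\mathfrak{L}M+\mathfrak{L'})_{[1]}=(\mathfrak{L}_M)_{[1]}=Z_1^M$, so $M^r/(\mathfrak{L}M+\mathfrak{L'})_{[1]}\simeq M^r/Z_1^M\simeq IM$ (the image of the Koszul map in $M$), which differs from $I\otimes_R M$ precisely by the $\Tor_1^R(R/I,M)$ piece that $\mathfrak{L}'$ was introduced to quotient out. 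Consequently $N_1\simeq IM/\ia M$, not $(I/\ia)\otimes_R M$. Fortunately this does not break the argument: since $JI\subset\ia$ we have $J\cdot IM\subset\ia M$, so $J$ kills $IM/\ia M$ directly; and Lemma~\ref{Lemma3.4} applied to $N$ then propagates the annihilation to all degrees $k\geq 1$. So the strategy works, but the stated identification of $N_1$ should be corrected.

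Your \emph{alternative} route is cleaner and, modulo bookkeeping, is the paper's proof: use the epimorphism of Proposition~\ref{Proposition3.3} to reduce to showing $J$ kills $\Sym_R^k(I/\ia)\otimes_R M$, then observe that $J$ kills $I/\ia$, hence all tensor/symmetric powers of $I/\ia$. The only difference is where Lemma~\ref{Lemma3.4} is invoked: the paper applies it to $\Sym_R(I/\ia)\simeq S/(\mathfrak{L}+(\gamma))$ to pass from $\ann(I/\ia)=J$ to $\ann(\Sym_R^k(I/\ia))$, whereas you observe (correctly) that this step follows immediately from the tensor-power description without the lemma; your main route instead applies Lemma~\ref{Lemma3.4} to the module $N$ itself. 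All three variants are valid; the paper's choice and your alternative are essentially interchangeable, and your main route is a slightly different decomposition that trades the epimorphism of Proposition~\ref{Proposition3.3} for the explicit presentation of Proposition~\ref{Proposition3.1}.
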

\begin{proof}
Fix $k\geq 1.$ As in the proof of Lemma~\ref{Lemma3.4}, the epimorphism $\psi$ in Proposition~\ref{Proposition3.3} implies that
\begin{align} \label{Equation3.1}
\ann_R(\Sym_R^k(I/\ia)\otimes_R M) \subset \ann_R(H_0( _k^M\!\ZC_\bullet^+) ).
\end{align}
On the other hand, one always has 
\begin{align}\label{Equation3.2}
\ann_R(\Sym_R^k(I/\ia)) \subset \ann_R(\Sym_R^k(I/\ia)\otimes_R M).
\end{align} 
Notice that $\Sym_R(I/\ia)\simeq \Sym_R(I)/(\gamma)\Sym_R(I)\simeq S/(\mathfrak{L}+(\gamma)).$ By Lemma~\ref{Lemma3.4}, 
\begin{align} \label{Equation3.3}
J=\ann_R(I/\ia)\subset \ann_R(\Sym_R^k(I/\ia)).
\end{align} 
By \eqref{Equation3.1}, \eqref{Equation3.2} and \eqref{Equation3.3},  $J\subset \ann_R(H_0(_k^M\!\ZC_\bullet^+)).$ 
\end{proof}

However, the structure of $H_0(_0^M\!\ZC_\bullet^+)$ is difficult to determine.
We recall a definition of Hassanzadeh and the second named author in \cite[Definition~2.1]{HN16}.
\begin{defi}
Let $R$ be a Noetherian ring and let $\ia\subset I$ be two ideals of $R.$ The \textit{disguised $s$-residual intersection of $I$ w.r.t. $\ia$} is the unique ideal $K$ such that $H_0(_0\ZC_\bullet^+)=R/K.$
\end{defi}

\smallskip
To make use of the acyclicity of the $_k\ZC_\bullet^+$ complexes, we recall the definition of classes of ideals that meet these requirements. 
\begin{defi} \label{Definition3.7}
Let $(R,\m)$ be a Noetherian local ring of dimension $d$ and let $I=(f_1,\ldots,f_r)$ be an ideal of height $g.$ Let $k\geq 0$ be an integer. Then
\begin{enumerate}
\item [(i)]  $I$ satisfies \textit{the sliding depth condition,} $\SD_k,$ if 
$$\depth(H_i(\ff;R))\geq \min\{d-g,d-r+i+k\}, \forall i ;$$
also $\SD$ stands for $\SD_0;$
\item [(ii)] $I$ satisfies  \textit{the sliding depth condition on cycles,} $\SDC_k,$  if $$\depth(Z_i(\ff;R))\geq \min\{d-r+i+k, d-g+2,d\} , \forall i\leq r-g ;$$
\item [(iii)] $I$ is \textit{strongly Cohen-Macaulay} if $H_i(\ff;R)$ is Cohen-Macaulay, for all $i.$
\end{enumerate} 
\end{defi}

\smallskip 
 Clearly $I$ is strongly Cohen-Macaulay if and only if $I$ satisfies $\SD_t,$ for all $t\geq r-g.$ Some of the basic properties and relations between such conditions $\SD_k$ and $\SDC_k$ are given in  \cite[Remark~2.4, Proposition~2.5]{Ha12}, \cite[Proposition~2.4]{HN16}, also see \cite{HSV83, HVV85, Va94}. It will be of importance to us that $\SD_k$ implies $\SDC_{k+1}$ whenever $R$ is a Cohen-Macaulay local ring by \cite[Proposition~2.5]{Ha12}.

\begin{rem}\label{Remark3.8}
Notice that adding an indeterminate $x$  to the ring and to ideals $I$ and $\ia.$ One has $(\ia+(x))\colon (I+(x))=(\ia \colon I) +(x)$ in $R[x]$ and in its localization at $\m+(x).$ Hence, for most statements, one may reduce to the case where the height of $I$ is big enough, if needed. 
\end{rem}

In the recent article \cite[Theorem~2.6]{HN16},  Hassanzadeh and the second named author proved the following results. The Cohen-Macaulay hypothesis in this theorem is needed to show that if for an $R$-module $M,\ \depth(M)\geq d-t$ then for any prime $\point,\ \depth(M_\point)\geq \hei(\point)-t,$ see \cite[Section~3.3]{Va94}. 

\begin{theo} \label{Theorem3.9}
Let $(R,\m)$ be a Cohen-Macaulay local ring of dimension $d$ and let $I=(f_1,\ldots,f_r)$ be an ideal of height $g.$ Let $s\geq g$ and fix $0\leq k\leq \min\{s,s-g+2\}.$ Suppose that one of the following hypotheses holds:
\begin{enumerate}
\item[\em (i)] $r+k\leq s$ and $I$ satisfies $\SD,$ or
\item[\em (ii)] $r+k\geq s+1,\ I$ satisfies $\SD$ and $\depth(Z_i)\geq d-s+k,$ for $0\leq i\leq k,$ or
\item[\em (iii)] $I$ is strongly  Cohen-Macaulay.
\end{enumerate}
Then for any $s$-residual intersection $J=(\ia \colon_R I),$  the complex $_k\ZC_\bullet^+$ is acyclic. Furthermore, $\Sym_R^k(I/\ia),$ for $1\leq k\leq s-g+2,$ and the disguised residual intersection $R/K$ are Cohen-Macaulay of codimension $s.$
\end{theo}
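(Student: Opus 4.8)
The plan is to run the double-complex machinery of Section~\ref{Section3} and read off acyclicity from the two spectral sequences associated to $C_\ig^\bullet \otimes_S \D_\bullet^R$. First I would recall that the graded strand $(\D_\bullet^R)_{[k]}$ is built out of finitely many copies of Koszul cycles $Z_j$ (shifted and in various homological degrees), so under the relevant hypothesis each term has large depth: in case (i) the range of cycles appearing forces $\SDC_1$ (via \cite[Proposition~2.5]{Ha12}) to give $\depth(Z_j)\geq d-s$ on the nose; in case (ii) the supplementary hypothesis $\depth(Z_i)\geq d-s+k$ for $0\leq i\leq k$ is exactly what is missing; in case (iii) strong Cohen-Macaulayness makes all the $Z_j$ maximal Cohen-Macaulay. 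The point is that in all three cases every module occurring in $(\D_\bullet^R)_{[k]}$ has depth $\geq d-s$, and $(\D_\bullet^R)_{[k]}$ is a complex of length $s$. One then wants to invoke an acyclicity lemma (Peskine--Szpiro / the depth criterion as in \cite{Ha12}) after checking that the complex is generically acyclic — which is where $J=\ia\colon I$ having height $\geq s$ enters: on the punctured-enough locus $I$ becomes a complete intersection of height $\leq s$, the approximation complex $\ZC_\bullet(\ff;R)$ is acyclic there, and the Koszul complex on $\gamma$ makes the total complex a resolution of the expected length.

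Concretely the key steps, in order, are: (1) compute $H_\ig^r(\D_i^R)$ via \cite[Lemma~2.1]{Ha12} and use $\ennd(H_\ig^r(S))=-r$ to see that the vertical spectral sequence of $C_\ig^\bullet \otimes_S \D_\bullet^R$ collapses in each graded degree $k$ to a single row; this is already set up in the excerpt and produces the complex $_k\ZC_\bullet^+$ together with the morphism $\tau_k$. (2) Show $_k\ZC_\bullet^+$ is acyclic by a depth count: each $_k\ZC_i^+$ is a direct sum of (shifted) Koszul cycles, bound its depth from below using the chosen hypothesis among (i)--(iii), and feed this into the acyclicity criterion, the generic acyclicity coming from the height hypothesis on $J$ as above. (3) Deduce from acyclicity and the comparison of the two spectral sequences that $H_0(_k\ZC_\bullet^+)$ has projective-dimension-type bound $s$, hence (being annihilated by the height-$s$ ideal $J$ for $k\geq 1$ by Proposition~\ref{Proposition3.5}, and by $K$ for $k=0$) is Cohen-Macaulay of codimension $s$. (4) Identify $H_0(_k\ZC_\bullet^+)$ with $\Sym_R^k(I/\ia)$ for $1\leq k\leq s-g+2$: apply Proposition~\ref{Proposition3.3} with $M=R$, where $\Tor_1^R(R/I,R)=0$ trivially, giving $\Sym_R^k(I/\ia)\simeq H_0(_k\ZC_\bullet^+)$; and for $k=0$ use the definition of the disguised residual $R/K = H_0(_0\ZC_\bullet^+)$.

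The main obstacle is step (2): verifying the depth/acyclicity criterion uniformly across the three cases, and in particular pinning down exactly which $\SD$-type condition on the $Z_i$ is needed in the range $r+k\geq s+1$ — this is precisely why hypothesis (ii) carries the extra clause $\depth(Z_i)\geq d-s+k$ for $0\leq i\leq k$, and why one must track carefully which homological degrees of cycles actually appear in $(\D_i^R)_{[k]}$ for $i$ near the top of the complex. A secondary subtlety is the behavior at $k=0$, where $H_0(_0\ZC_\bullet^+)=R/K$ need not be $R/J$ a priori (only $\sqrt{K}=\sqrt{J}$), so the Cohen-Macaulayness statement there is about the disguised residual, and getting its codimension to be exactly $s$ again relies on the generic computation forced by $\hei(J)\geq s$. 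Everything else — the spectral sequence bookkeeping, the identification of $H_0$ with symmetric powers, the transfer of depth bounds from $R$ to localizations using the Cohen-Macaulay hypothesis as flagged before the theorem — is routine once (2) is in place.
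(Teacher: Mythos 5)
The paper does not in fact prove Theorem~\ref{Theorem3.9}: it is quoted verbatim from \cite[Theorem~2.6]{HN16} (with the comment that the Cohen--Macaulay hypothesis lets one transfer depth bounds to localizations), and the body of the paper uses it as a black box. So there is no in-paper proof to compare against; what can be said is whether your plan matches the argument in the cited sources \cite{Ha12,HN16}.

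It does, in outline. The actual proof in \cite{Ha12,HN16} proceeds exactly along your four steps: collapse the vertical spectral sequence of $C_\ig^\bullet\otimes_S\D_\bullet^R$ in each graded degree (already set up in Section~\ref{Section3} here), observe that each $_k\ZC_i^+$ is a finite direct sum of twisted Koszul cycles $Z_j$, bound $\depth(_k\ZC_i^+)$ from below via $\SDC$-type inequalities, and feed these bounds into an acyclicity lemma of Peskine--Szpiro type, with the ``generic'' part supplied by $\hei(J)\geq s$ which forces the higher homology of $_k\ZC_\bullet^+$ to be supported in codimension $\geq s+1$. Then Proposition~\ref{Proposition3.3} with $M=R$ (and the trivial vanishing $\Tor_1^R(R/I,R)=0$) identifies $H_0(_k\ZC_\bullet^+)$ with $\Sym_R^k(I/\ia)$, while $H_0(_0\ZC_\bullet^+)=R/K$ by definition; the depth bound from acyclicity together with $\dim H_0 \le d-s$ (from Proposition~\ref{Proposition3.5} for $k\geq 1$, and from $K\subset J$ with $\hei J\geq s$ for $k=0$) yields Cohen--Macaulayness of codimension $s$.

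Two places where your sketch glosses over what actually carries the argument, and which you flagged yourself. First, the exact bookkeeping of which $Z_j$ appear in $_k\ZC_i^+$ for $i$ near the top of the complex is what differentiates hypothesis (i) from (ii), and it is not quite correct to say that $\SDC_1$ alone gives $\depth(Z_j)\geq d-s$ ``on the nose'' in case (i): one needs to combine the $\SDC_1$ bound $\depth(Z_j)\geq\min\{d-r+j+1,d-g+2,d\}$ with the restriction $r+k\leq s$ on which cycle indices $j$ can occur at each homological degree, and this comparison is the technical heart of \cite[Theorem~2.6]{HN16}. Second, the acyclicity lemma requires not just $\depth(_k\ZC_i^+)\geq d-s$ uniformly but a staircase of bounds in $i$ (terms in homological degree $i$ should have depth $\geq d-s+i$ roughly), which is why the cases are separated and why condition~(ii) carries an extra clause; this is the same pattern one sees worked out in detail in the proof of Proposition~\ref{Proposition4.4} in this paper, using Lemma~\ref{Lemma4.3}(iii). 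Your plan is therefore a faithful summary of the strategy but not a self-contained proof; for the purposes of this paper, citing \cite[Theorem~2.6]{HN16} as is done is the right move.
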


Notice that the condition (iii) is stronger than (i) and (ii). In \cite[Theorem~2.11]{Ha12}, Hassanzadeh showed that, under the sliding depth condition $\SD, \, K\subset J$  and $\sqrt{K}=~\sqrt{J}$, and further $K=J,$ whenever the residual is arithmetic.

%##########Cohen-Macaulayness and canonical module of residual intersection#####################
\section{Cohen-Macaulayness and canonical module of residual intersections}
\label{Section4}

In this section we will prove two important conjectures in the theory of residual intersections: the Cohen-Macaulayness of the residual intersections and the description of their canonical module.

\smallskip
In order to make reduction to lower height case and prove the Cohen-Macaulayness when $s=g,$ we first state the following proposition, which is a trivial generalization of \cite[Lemma~3.5]{HVV85} that only treated the sliding depth condition $\SD.$ The proof goes along the same lines.
\begin{pro}\label{Proposition4.1}
Let $(R,\m)$ be a Cohen-Macaulay local ring, let $I$ be an ideal of height $g$ and $k\geq 0$ be an integer. Let $x_1,\ldots,x_\ell$ be a regular sequence in $I.$ Let $\prime$ denote the canonical epimorphism $R\longrightarrow  R^\prime=R/(x_1,\ldots,x_\ell).$ Then $I$ satisfies $\SD_k$ if and only if $I^\prime$ satisfies $\SD_k$ (in $R^\prime)$. In particular, $I$ is strongly Cohen-Macaulay  if and only if $I^\prime$  is strongly Cohen-Macaulay .
\end{pro}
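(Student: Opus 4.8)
The plan is to reduce the statement about $\SD_k$ for $I$ in $R$ to the corresponding statement for $I' = I/(x_1,\dots,x_\ell)$ in $R' = R/(x_1,\dots,x_\ell)$, using that a regular sequence inside $I$ interacts well both with the Koszul homology and with depth. By an obvious induction on $\ell$, it suffices to treat the case $\ell = 1$; write $x = x_1$ and $R' = R/xR$. Since $x \in I$ is a nonzerodivisor on $R$, we have $\hei_{R'}(I') = g$ as well (the height drops by exactly one together with the dimension), and $R'$ is again Cohen-Macaulay of dimension $d' = d-1$.

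The key computational input is the behaviour of Koszul homology modulo $x$. Choosing generators $f_1,\dots,f_r$ of $I$ with images $f_1',\dots,f_r'$ generating $I'$, one has $K_\bullet(\ff';R') = K_\bullet(\ff;R)\otimes_R R'$. Since $x$ is a nonzerodivisor on $R$, the short exact sequence $0\to R \xrightarrow{x} R \to R' \to 0$ tensored with $K_\bullet(\ff;R)$ gives a long exact sequence in homology; however the cleaner route is to observe that $x \in I = \ann_R(H_i(\ff;R))$ for all $i \geq 1$ — wait, that only kills positive homology; instead I would use that $x$ is part of a regular sequence and, because $x \in I$, one can incorporate $x$ into the generating set. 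Concretely, enlarge the generating sequence to $\ff, x$; then $K_\bullet(\ff,x;R) \simeq K_\bullet(\ff;R)\otimes_R K_\bullet(x;R)$ and the associated long exact sequence, together with the fact that $x$ kills $H_i(\ff;R)$ for $i \geq 1$ and is regular on $R = H_0$, yields $H_i(\ff,x;R) \simeq H_i(\ff;R) \oplus H_{i-1}(\ff;R)$ for $i \geq 1$ (the standard "adding a nonzerodivisor in the ideal" splitting). On the other hand $H_i(\ff,x;R) \simeq H_i(\ff';R')$ because $K_\bullet(\ff,x;R)$ is a free resolution argument mod $x$ — more precisely, since $x$ is regular on $R$, the complex $K_\bullet(\ff;R)\otimes_R R'$ computes $H_i(\ff';R')$, and comparing with $K_\bullet(\ff,x;R)$ gives $H_i(\ff';R') \simeq H_i(\ff;R)\oplus H_{i-1}(\ff;R)$ for all $i$ (with $H_{-1} = 0$). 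This is exactly the identity used in \cite[Lemma~3.5]{HVV85}.

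With this homology identity in hand, the depth bookkeeping is routine: for a finitely generated $R$-module $N$ annihilated by $x$ (equivalently, an $R'$-module) one has $\depth_R(N) = \depth_{R'}(N)$, since a maximal $R'$-regular sequence on $N$ remains a maximal $R$-regular sequence on $N$. Therefore $\depth(H_i(\ff';R')) = \min\{\depth(H_i(\ff;R)), \depth(H_{i-1}(\ff;R))\}$ when both summands are nonzero, and equals the depth of whichever summand survives otherwise. Now one simply compares the two defining inequalities: $I$ satisfies $\SD_k$ in $R$ iff $\depth(H_i(\ff;R)) \geq \min\{d-g, d-r+i+k\}$ for all $i$, while $I'$ satisfies $\SD_k$ in $R'$ iff $\depth(H_i(\ff';R')) \geq \min\{d'-g, d'-r'+i+k\}$ for all $i$, with $d' = d-1$ and $r' = r+1$ (we use the generating set $\ff'$ of length $r+1$, or equivalently the length-$r$ set — the $\SD_k$ condition is independent of the generating set, which is also part of \cite{Ha12}). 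Since $d'-g = d-1-g$ and $d'-r'+i+k = (d-1)-(r+1)+i+k = d-r+i+k-2$, plugging in the splitting $H_i(\ff';R') \simeq H_i(\ff;R)\oplus H_{i-1}(\ff;R)$ makes the inequality for index $i$ on the primed side equivalent to the pair of inequalities for indices $i$ and $i-1$ on the unprimed side, after a shift; a short check that the shifts line up gives the equivalence in both directions. The statement about strong Cohen-Macaulayness follows by the same token, either directly (Cohen-Macaulayness of $H_i(\ff;R)$ is equivalent to that of $H_i(\ff';R')$ by the splitting and the fact that $\dim$ and $\depth$ both drop by the same amount — namely zero — when passing to modules killed by $x$), or by invoking $\SD_\infty$.

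I expect the only genuine subtlety — hence the "main obstacle" — to be verifying that the homology splitting $H_i(\ff';R') \simeq H_i(\ff;R)\oplus H_{i-1}(\ff;R)$ holds with the generating set handled correctly, i.e. that adjoining $x$ (which lies in $I$, not outside it) to the generators really does produce this clean splitting and that it matches $H_i(\ff';R')$; this is precisely where one must be careful that $x \in I$ annihilates positive Koszul homology while remaining $R$-regular. Everything downstream — the depth-preservation under killing $x$ and the arithmetic of the $\SD_k$ inequalities — is bookkeeping of the sort already carried out in \cite[Lemma~3.5]{HVV85} for $k=0$, and the generalization to arbitrary $k$ changes nothing structural since the extra parameter $k$ appears as an additive constant on both sides.
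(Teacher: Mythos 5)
Your strategy --- induct down to $\ell=1$, invoke the splitting $H_i(\ff';R')\simeq H_i(\ff;R)\oplus H_{i-1}(\ff;R)$, and compare the two $\SD_k$ inequalities --- is precisely the argument of \cite[Lemma~3.5]{HVV85} that the paper itself appeals to, so the method is the right one. However the parameter bookkeeping as written does not close. The height is $\hei_{R'}(I')=g-1$, not $g$: since $R$ is Cohen--Macaulay, $\dim R'/I'=\dim R/I=d-g$ while $\dim R'=d-1$, whence $\hei_{R'}(I')=g-1$. Your own parenthetical "height drops by exactly one" says this, but it contradicts the displayed claim $\hei_{R'}(I')=g$, and you then compute with $d'-g$ rather than $d'-(g-1)=d-g$. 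Likewise the natural generating set $\ff'$ of $I'$ has $r$ elements, not $r+1$; taking $r'=r+1$ amounts to adjoining the zero generator to $\ff'$, which introduces a further split $H_i(\ff',0;R')\simeq H_i(\ff';R')\oplus H_{i-1}(\ff';R')$ that you do not account for when you substitute the splitting above --- the two shifts compound inconsistently. Also, the claim that "$x$ is regular on $R=H_0$" is incorrect: $H_0(\ff;R)=R/I$, which $x\in I$ kills; the right observation is simply that $I$, hence $x$, annihilates $H_j(\ff;R)$ for every $j\geq 0$, so all the connecting maps in the relevant long exact sequence vanish.

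With the corrected data $g'=g-1$ and $r'=r$, the $\SD_k$ bound for $I'$ becomes $\min\{d-g,\,d-r+i+k-1\}$, and the splitting then gives the equivalence by the index shift $j=i-1$: the constraint $\depth H_{i-1}(\ff;R)\geq \min\{d-g,\,d-r+(i-1)+k\}$ coming from the summand $H_{i-1}(\ff;R)$ is exactly $\SD_k$ for $I$, while the constraint coming from the summand $H_i(\ff;R)$ is implied by it. With your stated $d'-g=d-g-1$ and $d'-r'+i+k=d-r+i+k-2$, both arguments of the $\min$ are off by one, so the "short check that the shifts line up" would actually fail for the forward implication. None of this changes the fact that your outline matches the paper's (the paper gives no details beyond the citation to \cite{HVV85}), but the arithmetic needs to be redone with the corrected $g'$ and $r'$.
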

\begin{pro}\label{Proposition4.2}
Let $(R,\m)$ be a Cohen-Macaulay local ring of dimension $d$ and let $I$ be an ideal of height $g.$ Let $\xx=x_1,\ldots, x_g$ be a regular sequence contained in $I$ and $J=((\xx)\colon_R I).$ Suppose that $R/I$ is Cohen-Macaulay and $I$ satisfies $\SD.$ Then $R/J$ is Cohen-Macaulay of codimension $g.$
\end{pro}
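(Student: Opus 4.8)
The plan is to exploit the fact that when $I$ has height $g$ and $\xx=x_1,\ldots,x_g$ is a regular sequence inside $I$, the ideal $J=((\xx)\colon_R I)$ is a \emph{link} of $I$, so that we are in the classical linkage situation rather than a genuine residual intersection. First I would invoke Proposition~\ref{Proposition4.1}: since $\xx$ is part of a maximal regular sequence in $I$ and $R/I$ is Cohen-Macaulay of dimension $d-g$, the hypothesis $\SD$ on $I$ descends, but more importantly the key point is that once we quotient by $\xx$ we may assume $g=\dim R - \dim(R/I)$ is as small as we like; in fact the cleanest route is to observe that $R/(\xx)$ is Cohen-Macaulay of dimension $d-g$ and the image $\bar I$ of $I$ in $R/(\xx)$ is an ideal with $J/(\xx)=(0\colon_{R/(\xx)}\bar I)$, i.e. $J/(\xx)=\ann_{R/(\xx)}(\bar I)$.

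Next I would use the standard linkage machinery (as in \cite{BH98}, Chapter~3, or \cite{HVV85}): for an unmixed ideal $I$ linked to a complete intersection $(\xx)$ in a Cohen-Macaulay ring, one has $\omega_{R/I}\simeq J/(\xx)$ and $\omega_{R/J}\simeq I/(\xx)$, and $R/J$ is Cohen-Macaulay of codimension $g$ precisely when $R/I$ is Cohen-Macaulay of codimension $g$ — which is our hypothesis. The only thing that needs checking to run this argument is that $I$ is actually unmixed of height $g$, or at least that $(\xx)\colon I$ behaves well; here the Cohen-Macaulayness of $R/I$ guarantees $I$ is unmixed of height $g$, so $(\xx)\colon((\xx)\colon I)=I$ and linkage duality applies verbatim. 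The role of the $\SD$ hypothesis is then essentially vacuous for the Cohen-Macaulayness conclusion in this height-$g$ case — it is there only because this proposition is stated as the base case of an induction on $s$, where $\SD$ (or $\SD_1$) is genuinely used when $s>g$; so I would remark that when $s=g$ the statement reduces to classical linkage duality and $\SD$ is not needed beyond what is already implied by $R/I$ Cohen-Macaulay.

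Alternatively, and perhaps more in the spirit of the paper, I would prove it via the acyclicity of the residual approximation complex. Apply Theorem~\ref{Theorem3.9}: with $s=g$ and $k=0$, condition (i) or (ii) is met under $\SD$ (noting $r+0\leq g$ is unlikely, so one uses (ii) with the depth-on-cycles condition, which follows from $\SD$ and $R/I$ Cohen-Macaulay via \cite[Proposition~2.5]{Ha12} giving $\SDC_1$), hence $_0\ZC_\bullet^+$ is acyclic and $R/K$ is Cohen-Macaulay of codimension $g$, where $K$ is the disguised residual intersection. Then by \cite[Theorem~2.11]{Ha12} one has $K\subset J$ with $\sqrt K=\sqrt J$, and since $R/K$ is Cohen-Macaulay of codimension $g=\hei(J)$ it is unmixed; combined with $I$ Cohen-Macaulay (hence unmixed of height $g$) and the fact that in the geometric/arithmetic situation $K=J$, one concludes $K=J$ and $R/J$ is Cohen-Macaulay of codimension $g$. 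The main obstacle in either approach is the bookkeeping needed to verify the hypotheses of Theorem~\ref{Theorem3.9} (specifically the depth-on-cycles bound) from $\SD$ together with $R/I$ Cohen-Macaulay, and to confirm $K=J$ rather than merely $\sqrt K=\sqrt J$ — in the height-$g$ case the latter follows because a direct link is automatically as nice as possible, but it should be spelled out using that $I$ is unmixed of height $g$ so that $\hei(I+J)$ is controlled.
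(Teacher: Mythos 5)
Both of your routes hit a genuine obstruction, and the remark that $\SD$ is ``essentially vacuous'' when $s=g$ is not correct — it is exactly the substitute the paper needs for a Gorenstein hypothesis it does not have. Your first route invokes classical linkage duality, but after reducing modulo $\xx$ the ideal $J/(\xx)$ is $0:_{R'}I'\simeq\Hom_{R'}(R'/I',R')$, whereas $\omega_{R'/I'}\simeq\Hom_{R'}(R'/I',\omega_{R'})$; these coincide, and double-linking returns $I$, precisely when $R'$ is Gorenstein. The hypothesis here is only that $R$ is Cohen-Macaulay, so $\omega_{R/I}\simeq J/(\xx)$ and $(\xx):((\xx):I)=I$ are not available, and $R/I$ Cohen-Macaulay does not by itself force $R/J$ Cohen-Macaulay. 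Your second route, via the acyclicity of $_0\ZC_\bullet^+$ and \cite{Ha12}, leaves the step $K=J$ unjustified: a link $(\xx):I$ over a non-Gorenstein CM ring need not be arithmetic (the local bound $\mu_{R_\point}((I/\ia)_\point)\leq 1$ at primes of height $g$ is not automatic) nor geometric, so \cite[Theorem~2.11]{Ha12} only gives $K\subset J$ with $\sqrt{K}=\sqrt{J}$, and the phrase ``a direct link is automatically as nice as possible'' begs the question.

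The paper's actual argument is more elementary and sidesteps both issues. After reducing modulo $\xx$ via Proposition~\ref{Proposition4.1}, one is in the height-zero case, where $J=0:I=Z_r$ and hence $R/J\simeq K_r/Z_r\simeq B_{r-1}$. The short exact sequence $0\to B_{r-1}\to Z_{r-1}\to H_{r-1}\to 0$ then yields $\depth(B_{r-1})\geq d$ (in the reduced ring), using that $Z_{r-1}$ is maximal Cohen-Macaulay (this is $\SDC_1$, which follows from $\SD$ by \cite[Proposition~2.5]{Ha12}) and $\depth(H_{r-1})\geq d-1$ (directly from $\SD$). This depth chase is where $\SD$ is genuinely used, and no appeal to $K=J$ or to linkage duality is needed.
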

\begin{proof}
The proof goes along the same lines as in \cite{HVV85}. By Proposition~\ref{Proposition4.1}, we may reduce modulo $\xx=x_1,\ldots,x_g$ and consider $R^\prime=R/(\xx).$ Thus we can assume that $\hei(I)=g=0$ and $J=(0\colon_R I).$
	
\smallskip
Suppose that  $I$ is an ideal generated by the sequence $x_1,\ldots,x_r.$ Then $Z_r=(0\colon_R I)=J$ and $K_r\simeq R.$ The exact sequence
\begin{align*}
0\longrightarrow Z_r\longrightarrow K_r \longrightarrow B_{r-1}\longrightarrow 0
\end{align*}
shows that $B_{r-1}\simeq K_r/Z_r\simeq R/J.$
	
\smallskip
Since $I$ satisfies $\SD,\ I$ satisfies $\SDC_1$ by \cite[Proposition~2.5]{Ha12}. It follows that $Z_{r-1}$ is Cohen-Macaulay of dimension $d.$  Moreover, $I$ satisfies $\SD,$ $\depth(H_{r-1})\geq d-1.$ Therefore, the exact sequence
\begin{align*}
0\longrightarrow B_{r-1}\longrightarrow Z_{r-1} \longrightarrow H_{r-1}\longrightarrow 0
\end{align*}
implies that $H_\m^i(B_{r-1})=0,$ for all $i\neq d,$ hence $B_{r-1}$ is Cohen-Macaulay of dimension $d.$
\end{proof}

To study the Cohen-Macaulayness of residual intersections in the general case, we will use the following lemma.
\begin{lem} \label{Lemma4.3}
Let $(R,\m)$ be a Cohen-Macaulay  local ring of dimension $d,$ with canonical module $\omega.$ Suppose that $S=R[T_1,\ldots,T_r]$ is the standard graded polynomial ring over  $R$ and $\ig:=S_{+} .$ Let $\ia\subset I=(f_1,\ldots,f_r)$ be two ideals of $R,$ with $\hei(I)=g.$ If $J=(\ia\colon_R I)$ is an $s$-residual intersection of $I,$  then
\begin{enumerate}
\item[\em (i)] There is a natural graded isomorphism
$$H_\ig^r(S)\simeq  \, ^\ast\!\Homgr_S(S(-r),R).$$
 In particular, for all $\mu\in \Z,$
 \begin{align*}
 H_\ig^r(S)_\mu\simeq S_{-\mu-r}^\ast=\Hom_R(S_{-\mu-r},R).
 \end{align*}
\item[\em (ii)] If $g\geq 2,$ then $\depth(_k\ZC_0^+)=\depth(_k\ZC_s^+)=d,$ for all $0\leq k\leq s-1.$
\item[\em (iii)] If $g=2$ and $I$ satisfies $\SD_\ell,$ then
\begin{align*}
\depth(_0\ZC_i^+)\geq\min\{d, d-s+i+\ell\},
\end{align*}
for all $1\leq i\leq s-1.$ 
\item[\em(iv)] If $g\geq 2,$ then the following diagram, where the vertical isomorphisms are induced by the identifications $H_\m^d(Z_\ast)\simeq {Z^\omega_{r-1-\ast}}^\vee$ in Proposition~\ref{Proposition2.4}, is commutative, for all $0\leq k\leq s-2,$
\begin{displaymath}
\xymatrix{  H_\m^d(_k\ZC_s^+) \ar[r]\ar[d]_{\simeq} & H_\m^d(_k\ZC_{s-1}^+) \ar[d]^{\simeq } \\ 
(_{s-k-1}^{\qquad \omega}\ZC_0^+)^\vee\ar[r] &  (_{s-k-1}^{\qquad \omega}\ZC_1^+)^\vee.}
\end{displaymath}
\end{enumerate}
\end{lem}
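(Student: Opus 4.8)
The plan is to prove each of the four items of Lemma~\ref{Lemma4.3} in turn, relying heavily on the explicit descriptions of the modules $_k\ZC_i^+$ recalled in Section~\ref{Section3} and on the duality $H_\m^d(Z_p)\simeq {Z^\omega_{r-1-p}}^\vee$ of Proposition~\ref{Proposition2.4}.

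\smallskip
For item~(i), I would simply unwind the standard computation of top local cohomology of a polynomial ring. Write $S=R[T_1,\ldots,T_r]$ with $\ig=(T_1,\ldots,T_r)$; then $H_\ig^r(S)$ is computed by the \v Cech complex on the $T_i$, and its degree-$\mu$ piece is the free $R$-module on the monomials $T_1^{a_1}\cdots T_r^{a_r}$ with all $a_i\leq -1$ and $\sum a_i=\mu$. These are in natural $R$-bilinear duality with the monomials of degree $-\mu-r$ in $S$, which gives the graded isomorphism $H_\ig^r(S)\simeq {}^\ast\!\Homgr_S(S(-r),R)$; restricting to a graded strand yields $H_\ig^r(S)_\mu\simeq \Hom_R(S_{-\mu-r},R)$. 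This is routine and I would not belabor it.

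\smallskip
For items~(ii) and~(iii), the strategy is to combine the formula $H_\ig^r(\D_{r-1+i}^M)\simeq\bigoplus_{j}(Z_j^M\otimes_R H_\ig^r(S))^{\binom{s}{\cdot}}(\cdot)$ with the graded duality of (i), so that each $_k\ZC_i^+$ becomes a finite direct sum of modules of the form $\Hom_R(S_\ell,R)\otimes_R Z_j$, i.e.\ a finite direct sum of copies of Koszul cycles $Z_j$ (for $i>k$) or of $(\D_i^R)_{[k]}$, which is built from copies of $Z_j$ with $j$ in a controlled range (for $i\leq k$). For (ii): the extreme terms $_k\ZC_0^+$ and $_k\ZC_s^+$ are copies of $Z_0=R$ and $Z_r$-free pieces respectively; more precisely $_k\ZC_0^+=(\D_0^R)_{[k]}$ is a direct sum of copies of $R$ (since $Z_0=R$, and the strand condition forces $j=0$ there as $k\geq 0$... actually $j$ ranges so that only $Z_0$ appears after taking degree $k$, using $\ennd H_\ig^r(S)=-r$), and $_k\ZC_s^+=H_\ig^r(\D_{r+s-1}^R)_{[k]}$ reduces to copies of $Z_r\simeq R$ by Lemma~\ref{Lemma2.1} together with the fact that $\hei I\geq 2$ forces the relevant homologies to vanish on a large enough locus — in any case each is a free $R$-module, hence of depth $d$. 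For (iii): with $g=2$, every Koszul cycle $Z_j$ appearing in $_0\ZC_i^+$ satisfies $\depth Z_j\geq \min\{d,d-r+j+\ell+1,d-g+2\}$ via $\SDC_{\ell+1}$ (which follows from $\SD_\ell$ by \cite[Proposition~2.5]{Ha12}); tracking the index shift $i\mapsto r-1+i$ in $H_\ig^r(\D_{r-1+i}^R)_{[0]}$ and the degree-zero truncation shows the governing cycle index is $j=i$ up to the shift, giving $\depth(_0\ZC_i^+)\geq\min\{d,d-s+i+\ell\}$. The bookkeeping of which $Z_j$'s survive in the degree-$k$ strand after applying $H_\ig^r$ is the part requiring the most care.

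\smallskip
For item~(iv), I would construct the vertical isomorphisms termwise from Proposition~\ref{Proposition2.4}: each $_k\ZC_i^+$ (with $i\in\{s-1,s\}$) is a direct sum of copies of $Z_j$'s twisted by strands of $H_\ig^r(S)$, and applying $H_\m^d(-)$ and Matlis dualizing turns $H_\m^d(Z_j)$ into ${Z_{r-1-j}^\omega}^\vee$, i.e.\ into the Matlis dual of the corresponding summand of $_{s-k-1}^{\ \ \omega}\ZC_{\bullet}^+$ — the index reversal $j\mapsto r-1-j$ together with the combinatorial reversal of the binomial multiplicities $\binom{s}{\cdot}$ and the strand-degree bookkeeping is exactly what sends $(\D_s^R)_{[k]}$-type data to $(\,^\omega\D_{?}^\omega)_{[s-k-1]}$-type data, matching the definition of $_{s-k-1}^{\ \ \omega}\ZC_0^+$ and $_{s-k-1}^{\ \ \omega}\ZC_1^+$. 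Commutativity of the square then follows because the horizontal maps on both rows are induced, functorially, by the same differential of $\D_\bullet$ (respectively its local-cohomology incarnation), and Matlis duality is exact and natural; so the square is obtained by applying the exact functor $H_\m^d(-)^{\vee}$-type identification to a commuting square of complexes. The main obstacle throughout is the careful index-chasing: verifying that the strand truncations, the shift by $r-1$, and the duality index reversal $p\mapsto r-1-p$ are mutually compatible so that the claimed modules really do match on the nose; I would isolate this as a bookkeeping lemma about the summand structure of $_k\ZC_i^+$ and then let (ii)--(iv) fall out formally.
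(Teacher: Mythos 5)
Your overall strategy coincides with the paper's: (i) is the graded local duality theorem (the paper invokes it directly; your \v Cech-complex unwinding is the standard proof of it), (ii)--(iii) are depth estimates obtained by identifying each $_k\ZC_i^+$ as a finite direct sum of Koszul cycles $Z_j$, and (iv) is a termwise application of Proposition~\ref{Proposition2.4} plus naturality of the differentials under Matlis duality and the identification in (i).

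There is, however, a concrete slip in your argument for (ii). You assert that $_k\ZC_s^+=H_\ig^r(\D_{r+s-1}^R)_{[k]}$ ``reduces to copies of $Z_r\simeq R$ by Lemma~\ref{Lemma2.1}.'' But under the standing assumption $\depth_R(I)>0$ (which holds since $g\geq 2$) one has $Z_r=0$, not $R$; and Lemma~\ref{Lemma2.1} concerns the case $I=R$, which is not what is being used. The cycle actually appearing in $\D_{r+s-1}^R$ is $Z_{r-1}$ (the only index $j$ with $r-1\leq j\leq r+s-1$ and $Z_j\neq 0$), and the fact you need is $Z_{r-1}\simeq R$, which is the $i=r-1$ case of the Koszul-cycle duality Proposition~\ref{Proposition2.2} (applicable because $R$ is Cohen-Macaulay, hence $\Ser_2$, and $\hei(I)=g\geq 2$): $Z_{r-1}\simeq\Hom_R(Z_0,R)=R$. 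The paper's proof of (ii) is exactly the one-liner ``$Z_{r-1}\simeq Z_0=R$.'' For (iii) you defer the bookkeeping; the paper pins it down by showing that $_0\ZC_i^+$ is a direct sum of copies of $Z_\delta,\ldots,Z_{r-1}$ with $\delta=\max\{0,r-1+i-s\}$, so that $\SDC_{\ell+1}$ gives $\depth(_0\ZC_i^+)\geq\min\{d,d-r+\delta+\ell+1\}\geq\min\{d,d-s+i+\ell\}$. Item (iv) in the paper is an explicit commutative ladder assembled from (i), Proposition~\ref{Proposition2.4}, and the natural isomorphism ${Z^\omega_j}^\vee\otimes_R S_{[\ell]}^\ast\simeq(Z^\omega_j\otimes_R S_{[\ell]})^\vee$; your outline of (iv) is correct in spirit and, once the index reversal is written out, matches the paper.
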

\begin{proof} 
(i) is the graded local duality theorem.

\smallskip
(ii)  Since $Z_{r-1}\simeq Z_0=R,\ \depth(_k\ZC_0^+)= \depth(_k\ZC_s^+)=d.$ 

\smallskip
(iii)  By \cite[Proposition~2.5]{Ha12}, $I$ satisfies $\SDC_{\ell+1},$  that is
\begin{displaymath}
\depth(Z_j)\geq \min\{d-r+j+\ell+1,d\},
\end{displaymath}
for all $ 0\leq j\leq r-2.$ 

\smallskip 
For any $1\leq i\leq s-1,$
\begin{equation*}
_0\ZC_i^+=H_\ig^r(\D_{r-1+i})_{[0]}=\bigoplus_{j=r-1+i-s}^{r-1}\big(Z_j\otimes_RH_\ig^r(S)\big)_{[-r+1-i]}^{\binom{s}{r-1+i-j}}.
\end{equation*}
Thus $_0\ZC_i^+$ is a direct sum of copies of  modules  $Z_\delta,\ldots,Z_{r-1},$ where $\delta=\max\{0,r-1+i-s\}.$ Notice that $0\leq \delta\leq r-2.$ It follows that
\begin{align*}
\depth(_0\ZC_i^+)&=\min_{\delta \leq j\leq r-1}\{\depth(Z_j)\} =\min\{\min_{\delta \leq j\leq r-2}\{\depth(Z_j)\},d \}\\
& \geq \min\{d,d-r+\delta+\ell +1\}\geq \min\{d, d-s+i+\ell\}.
\end{align*}

\smallskip
(iv) We have the following commutative diagrams, for all $0\leq k\leq s-2,$
\begin{footnotesize}
\begin{displaymath}
\xymatrix@C=8pt{  H_\m^d(_k\ZC_s^+)=H_\m^d(H_\ig^r(\D_{r+s-1}))_{[k]} \ar[r]\ar[d]_{\simeq} & H_\m^d(H_\ig^r(\D_{r+s-2}))_{[k]}=H_\m^d(_k\ZC_{s-1}^+) \ar[d]^{\simeq } \\ 
H_\m^d(Z_{r-1})\otimes_R H_\ig^r(S)_{[k-r-s+1]} \ar[r] \ar[d]_{\simeq}  & H_\m^d(Z_{r-1})\otimes_R H_\ig^r(S)_{[k-r-s+2]}^s \oplus H_\m^d(Z_{r-2})\otimes_R H_\ig^r(S)_{[k-r-s+2]} \ar[d]^{\simeq} \\
{Z^\omega_0}^\vee \otimes_R S_{[s-k-1]}^\ast\ar[r] \ar[d]_{\simeq}& {Z^\omega_0}^\vee \otimes_R (S_{[s-k-2]}^s)^\ast \oplus {Z^\omega_1}^\vee \otimes_R  S_{[s-k-2]}^\ast\ar[d]^{\simeq}\\
(Z^\omega_0\otimes_R S_{[s-k-1]})^\vee \ar[r] \ar[d]_{\simeq}& (Z^\omega_0\otimes_R S_{[s-k-2]}^s \oplus Z^\omega_1\otimes_R S_{[s-k-2]})^\vee \ar[d]^{\simeq} \\
(_{s-k-1}^{\qquad \omega}\ZC_0^+)^\vee=((\D^\omega_0)_{[s-k-1]})^\vee \ar[r] &  ((\D^\omega_1)_{[s-k-1]})^\vee=(_{s-k-1}^{\qquad \omega}\ZC_1^+)^\vee}
\end{displaymath}
\end{footnotesize}
where the first diagram and the last diagram are commutative by the definitions, the second diagram is commutative by the natural isomorphisms in item (i) and Proposition~\ref{Proposition2.4}, and the third diagram is commutative by the natural isomorphism  
\begin{displaymath}
{Z^\omega_i}^\vee \otimes_R S_{[\ell]}^\ast  \simeq (Z^\omega_i \otimes_R S_{[\ell]})^\vee,
\end{displaymath} for all $i,\ell,$ see \cite[II, \S4, no 4, Proposition 4]{Bou70}.
\end{proof}

\begin{pro} \label{Proposition4.4}
Let $(R,\m)$ be a Cohen-Macaulay local ring of dimension $d,$ with canonical module $\omega,$ and $I=~(f_1,\ldots,f_r)$ be an ideal of height 2. Suppose that $J=(\ia\colon_R I)$ is an $s$-residual intersection of $I$ and $K$ is the disguised $s$-residual intersection of $I$ w.r.t. $\ia.$ If $I$  satisfies $\SD_1,$ then there exists an epimorphism of $R$-modules
\begin{align*}
\xymatrix{\phi: H_0( _{s-1}^{\quad \omega}\ZC_\bullet^+) \ar@{->>}[r]& \omega_{R/K}}
\end{align*}
and  $\phi$ is an isomorphism if $I$ satisfies $\SD_2.$
\end{pro}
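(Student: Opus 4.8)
The strategy is to exploit the double-complex machinery of Section~\ref{Section3} together with the duality results for Koszul cycles (Proposition~\ref{Proposition2.4}) and the acyclicity statement of Theorem~\ref{Theorem3.9}. Since $\hei(I)=g=2$, Theorem~\ref{Theorem3.9}(ii) applies under $\SD_1$ -- indeed $\SD_1$ implies $\SDC_2$ by \cite[Proposition~2.5]{Ha12}, so the depth hypotheses on the cycles $Z_i$ needed in Lemma~\ref{Lemma4.3}(iii) are met -- giving that $_0\ZC_\bullet^+$ is acyclic and that $H_0(_0\ZC_\bullet^+)=R/K$ is Cohen-Macaulay of codimension $s$. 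The plan is then to compute $H_\m^d(R/K)$ by running the spectral sequence of the double complex $C_\m^\bullet\otimes_R\, {_0\ZC_\bullet^+}$: since $_0\ZC_\bullet^+$ is acyclic with $R/K$ at the tail and $R/K$ is Cohen-Macaulay of dimension $d-s$, the only surviving local cohomology is $H_\m^{d-s}(R/K)$, and by chasing the spectral sequence one identifies $H_\m^{d-s}(R/K)$ as a subquotient -- in fact the homology at the relevant spot -- of the complex $H_\m^d(_0\ZC_s^+)\to H_\m^d(_0\ZC_{s-1}^+)$, using Lemma~\ref{Lemma4.3}(ii)--(iii) to see that the higher $H_\m^j(_0\ZC_i^+)$ with $j<d$ do not interfere (this is exactly where $\SD_1$, via the depth estimate $\depth(_0\ZC_i^+)\geq\min\{d,d-s+i+1\}$, is used).

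\textbf{Key steps in order.} First I would set up the two spectral sequences of $C_\m^\bullet\otimes_R\, {_0\ZC_\bullet^+}$. One converges (by acyclicity of $_0\ZC_\bullet^+$) to $H_\m^{\bullet}(R/K)$, concentrated in cohomological degree $d-s$ by Cohen-Macaulayness. The other has $E_1$-page built from $H_\m^j(_0\ZC_i^+)$; using Lemma~\ref{Lemma4.3}(ii) ($\depth\, {_0\ZC_0^+}=\depth\, {_0\ZC_s^+}=d$) and (iii) ($\depth\, {_0\ZC_i^+}\geq\min\{d,d-s+i+1\}$ from $\SD_1$), the entries $H_\m^j(_0\ZC_i^+)$ vanish for $j<d$ except in a controlled range, so that after taking the $H_\m^d$-row the abutment $H_\m^{d-s}(R/K)$ is computed as the cokernel of $H_\m^d(_0\ZC_s^+)\to H_\m^d(_0\ZC_{s-1}^+)$, and is a \emph{subquotient} of $H_\m^d(_0\ZC_{s-1}^+)$ surjected onto by $\coker$. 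Second, I would apply Matlis duality: $\omega_{R/K}\simeq H_\m^{d-s}(R/K)^\vee$ by local duality, so dualizing the surjection $H_\m^d(_0\ZC_{s-1}^+)\twoheadrightarrow H_\m^{d-s}(R/K)$ -- or rather, dualizing the right exact piece $H_\m^d(_0\ZC_s^+)\to H_\m^d(_0\ZC_{s-1}^+)\to H_\m^{d-s}(R/K)\to 0$ -- yields $\omega_{R/K}$ as the kernel, equivalently the homology, of the Matlis dual complex. Third, I would invoke Lemma~\ref{Lemma4.3}(iv): the Matlis dual of $H_\m^d(_0\ZC_s^+)\to H_\m^d(_0\ZC_{s-1}^+)$ is naturally identified with $_{s-1}^{\quad\omega}\ZC_0^+\leftarrow\, {_{s-1}^{\quad\omega}\ZC_1^+}$ (the $k=0$ case of that lemma), so that $\omega_{R/K}$ is exactly $H_0(_{s-1}^{\quad\omega}\ZC_\bullet^+)$ modulo the image of $_{s-1}^{\quad\omega}\ZC_1^+$ coming from further to the left -- i.e. there is a canonical surjection $H_0(_{s-1}^{\quad\omega}\ZC_\bullet^+)\twoheadrightarrow\omega_{R/K}$, which is $\phi$.

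\textbf{Upgrading under $\SD_2$.} For the isomorphism statement, the point is that $\phi$ fails to be injective only because of the contribution of $H_\m^d(_0\ZC_{s-2}^+)$ (equivalently, of higher differentials / the next spot to the left in the dual complex): the cokernel computed above is literally $\coker(H_\m^d(_0\ZC_s^+)\to H_\m^d(_0\ZC_{s-1}^+))$ only if the incoming differential from position $s-2$ is already accounted for, and precisely its vanishing at the relevant graded spot is what one needs. Under $\SD_1$ one only gets $\depth\, {_0\ZC_{s-2}^+}\geq d-1$, which is not enough to kill $H_\m^{d-1}(_0\ZC_{s-2}^+)$; under $\SD_2$ the estimate improves to $\depth\, {_0\ZC_i^+}\geq\min\{d,d-s+i+2\}$, so in particular $H_\m^{d-1}(_0\ZC_{s-2}^+)=0$, the spectral sequence degenerates one step further, and $H_\m^{d-s}(R/K)=\ker(H_\m^{d+1}(\cdots))=\ldots$ is computed cleanly as the homology $H_0$ of the full dual complex $_{s-1}^{\quad\omega}\ZC_\bullet^+$ at position $0$ -- making $\phi$ an isomorphism. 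I expect the \textbf{main obstacle} to be the careful bookkeeping of the spectral-sequence convergence: pinning down exactly which $H_\m^j(_0\ZC_i^+)$ can be nonzero, verifying that under $\SD_1$ they assemble to give only a surjection while under $\SD_2$ they collapse to an isomorphism, and checking that the natural maps produced by the spectral sequence agree with the explicit complex maps in Lemma~\ref{Lemma4.3}(iv) so that the identification of the abutment with $H_0(_{s-1}^{\quad\omega}\ZC_\bullet^+)$ is canonical and not merely abstract.
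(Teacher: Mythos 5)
Your overall strategy -- the double complex $C_\m^\bullet\otimes_R {}_0\ZC_\bullet^+$, the acyclicity/Cohen--Macaulayness from Theorem~\ref{Theorem3.9}, local duality to turn $\omega_{R/K}$ into $H_\m^{d-s}(R/K)^\vee$, the depth estimates of Lemma~\ref{Lemma4.3}(ii)--(iii), the identification in Lemma~\ref{Lemma4.3}(iv), and Matlis duality at the end -- is exactly the paper's, and you have correctly singled out $H_\m^{d-1}(_0\ZC_{s-2}^+)$ as the module whose vanishing (forced by $\SD_2$ but not by $\SD_1$) distinguishes ``epimorphism'' from ``isomorphism.'' But there is a genuine error in the spectral-sequence bookkeeping, precisely the step you flag as the main obstacle, and it reverses the key arrow.

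You claim the abutment $H_\m^{d-s}(R/K)$ is ``computed as the cokernel of $H_\m^d(_0\ZC_s^+)\to H_\m^d(_0\ZC_{s-1}^+)$'' and is ``a subquotient of $H_\m^d(_0\ZC_{s-1}^+)$.'' This is backwards. The nonzero $E_1$-entries on the antidiagonal of total degree $d-s$ reduce (by Lemma~\ref{Lemma4.3}(ii)--(iii)) to the single corner $(-s,-d)$, so $H_\m^{d-s}(R/K)\simeq{}^\infty\textbf{E}_{\ver}^{-s,-d}$. Since the incoming differentials to $(-s,-d)$ come from positions $i>s$ which vanish, while the outgoing $d_r$ for $r\geq 2$ land at $(-s+r,-d+r-1)$ -- i.e.\ at $H_\m^{d-r+1}(_0\ZC_{s-r}^+)$, which under $\SD_1$ can be nonzero for $r=2$ -- one gets a \emph{monomorphism}
\[
H_\m^{d-s}(R/K)\simeq {}^\infty\textbf{E}_{\ver}^{-s,-d}\hookrightarrow {}^2\textbf{E}_{\ver}^{-s,-d}=\ker\bigl(H_\m^d(_0\ZC_s^+)\to H_\m^d(_0\ZC_{s-1}^+)\bigr),
\]
a submodule of $H_\m^d(_0\ZC_s^+)$, not a quotient of $H_\m^d(_0\ZC_{s-1}^+)$. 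Via Lemma~\ref{Lemma4.3}(iv) and left-exactness of Matlis duality, this kernel is identified with $H_0(_{s-1}^{\quad\omega}\ZC_\bullet^+)^\vee$, and it is precisely by dualizing the \emph{injection} $\omega_{R/K}^\vee\hookrightarrow H_0(_{s-1}^{\quad\omega}\ZC_\bullet^+)^\vee$ that one obtains the epimorphism $\phi$. If one instead follows your ``cokernel'' picture and dualizes, one gets an injection $\omega_{R/K}\hookrightarrow(\cdots)$ (wrong direction), and the dual of the cokernel at position $s-1$ would identify with cycles at spot $1$ of $_{s-1}^{\quad\omega}\ZC_\bullet^+$, not with $H_0$. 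Relatedly, the obstruction you describe as an ``incoming differential from position $s-2$'' is actually the \emph{outgoing} $d_2$ from $(-s,-d)$ whose target is $H_\m^{d-1}(_0\ZC_{s-2}^+)$; under $\SD_2$ this target vanishes and ${}^\infty={}^2$, giving the isomorphism. So the mechanism you invoke is essentially correct, but the direction of the key inclusion and of the Matlis-dualization must be reversed for the proof to go through.
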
 

\begin{proof}  Since $I$ satisfies $\SD_1, \ _0\ZC_\bullet^+$ is acyclic and $R/K$ is Cohen-Macaulay of dimension $d-s$ by Theorem~\ref{Theorem3.9}.  By local duality
\begin{equation}\label{Equation4}
H_\m^{d-s}(R/K)\simeq \omega_{R/K}^\vee.
\end{equation}
	
\smallskip
Now the double complex $C_\m^\bullet (_0\ZC_\bullet^+)$ gives rise to two spectral sequences. The second terms of the horizonal spectral are
\begin{align*}
^2\textbf{E}_{\hor}^{-i,-j}=
\begin{cases}
H_\m^{d-s}(R/K)&\text{if} \quad j=d-s\; \text{and}\  i=0\\0 & \text{otherwise}
\end{cases}
\end{align*}
and the first terms of the vertical spectral are
\begin{small}
\begin{align*}
\xymatrix@C=8pt{ 0&0&0&\cdots &0 &0&0\ar@{--}[dddddllllll]\\
0&0&0&\cdots &0 &0&0\\
0& 0 &0&\cdots& 0 & H_\m^{d-s+2}(_0\ZC_1^+)  &0\\
\cdots & \cdots& \cdots& \cdots   & \cdots& \cdots&\cdots \\ 	
0 & 0&H_\m^{d-1}(_0\ZC_{s-2}^+)\ar[r]&\cdots\ar[r] & H_\m^{d-1}(_0\ZC_2^+)\ar[r]& H_\m^{d-1}(_0\ZC_1^+)&0\\
H_\m^d(_0\ZC_s^+)\ar[r]& H_\m^d(_0\ZC_{s-1}^+)\ar[r]&H_\m^d(_0\ZC_{s-2}^+)\ar[r]&\cdots \ar[r] &  H_\m^d(_0\ZC_2^+)\ar[r]& H_\m^d(_0\ZC_1^+)\ar[r]&H_\m^d(R)}
\end{align*}
\end{small}
since $\depth( _0\ZC_i^+)\geq d-s+i+1,$ for all $1\leq i\leq s-1,$ by Lemma~\ref{Lemma4.3}(iii).

\smallskip
By the convergence of the spectral sequences, we obtain
\begin{equation} \label{Equation5}
H_\m^{d-s}(R/K)\simeq \, ^\infty\textbf{E}_{\ver}^{-s,-d}\subset \;  ^2\textbf{E}_{\ver}^{-s,-d}.
\end{equation}

\smallskip
By  Lemma~\ref{Lemma4.3}(iv),  we have the following commutative diagram
\begin{displaymath}
\xymatrix{  H_\m^d(_0\ZC_s^+) \ar[r]\ar[d]_{\simeq} & H_\m^d(_0\ZC_{s-1}^+) \ar[d]^{\simeq } \\ 
	(_{s-1}^{\quad \omega}\ZC_0^+)^\vee\ar[r] &  (_{s-1}^{\quad \omega}\ZC_1^+)^\vee.}
\end{displaymath}
Therefore
\begin{equation} \label{Equation6}
^2\textbf{E}_{\ver}^{-s,-d}\simeq H_0( _{s-1}^{\quad \omega}\ZC_\bullet^+)^\vee.
\end{equation}
By (\ref{Equation4}), (\ref{Equation5}) and  (\ref{Equation6}), we can define a monomorphism of $R$-modules by the compositions
\begin{align*}
\xymatrix{\omega_{R/K}^\vee \ar[r]^{\simeq\qquad} & H_\m^{d-s}(R/K )\ar[r]^{\;\simeq }&  ^\infty\textbf{E}_{\ver}^{-s,-d}\ar@{^(->}[r] &  ^2\textbf{E}_{\ver}^{-s,-d} \ar[r]^{\simeq\quad  } &H_0( _{s-1}^{\quad \omega}\ZC_\bullet^+)^\vee}
\end{align*}
which provides an epimorphism
\begin{align*}
\phi: H_0( _{s-1}^{\quad \omega}\ZC_\bullet^+) \longrightarrow \omega_{R/K}.
\end{align*}

\smallskip
If $I$ satisfies $\SD_2,$ then $\depth( _0\ZC_i^+)\geq \min\{d,d-s+i+2\},$ for all $1\leq i\leq s-1,$ by Lemma~\ref{Lemma4.3}(iii). It follows that 
\begin{equation*} 
H_\m^{d-s}(R/K)\simeq \, ^\infty\textbf{E}_{\ver}^{-s,-d}= \, ^2\textbf{E}_{\ver}^{-s,-d}
\end{equation*}
and thus $\phi$ is an isomorphism.
\end{proof}

Now we state our main result that answers the question of Huneke and Ulrich in \cite[Question~5.7]{HU88} and also answers the conjecture of Hassanzadeh and the second named author in \cite[Conjecture~5.9]{HN16}.  

\begin{theo} \label{Theorem4.5}
Let $(R,\m)$ be a Cohen-Macaulay local ring of dimension $d,$ with canonical module $\omega,$ and $\ia\subset I$ be two ideals of $R,$ with $\hei(I)=g\leq s.$ Suppose that $I$ satisfies $\SD_1$ and $J=(\ia\colon_R I)$ is an $s$-residual intersection of $I.$ Then $R/J$ is Cohen-Macaulay of dimension $d-s.$
\end{theo}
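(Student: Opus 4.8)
The strategy is to reduce the general statement to the height-two case already analyzed in Proposition~\ref{Proposition4.4}, since the disguised residual $K$ coincides with $J$ once we know $R/K$ is Cohen-Macaulay of the expected dimension and has a well-behaved canonical module. First I would use Remark~\ref{Remark3.8} to assume $g=\hei(I)$ is as large as needed; in particular, one may assume $g\geq 2$. Indeed, adjoining indeterminates $x_1,\ldots,x_t$ and replacing $I$ by $I+(x_1,\ldots,x_t)$, $\ia$ by $\ia+(x_1,\ldots,x_t)$, both the $\SD_1$ hypothesis (by Proposition~\ref{Proposition4.1}) and the $s$-residual intersection property are preserved, and Cohen-Macaulayness of $R/J$ descends from that of the modified ring's quotient. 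So the only genuinely new content beyond Proposition~\ref{Proposition4.4} is reducing from arbitrary $g\geq 2$ to $g=2$, and separately handling the boundary case(s) $g\leq 1$, which by Remark~\ref{Remark3.8} I would sidestep by first inflating the height.

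The reduction from $g\geq 2$ to $g=2$ I expect to go via a generic link / regular sequence argument in the spirit of \cite{HVV85}: since $g\geq 2$ we can pick $x=x_1,\ldots,x_{g-2}$ a regular sequence in $\ia$ (or at least in $I$ after adjusting generators) and pass to $R'=R/(x)$. By Proposition~\ref{Proposition4.1}, $I'$ still satisfies $\SD_1$ in $R'$; one checks $J'=(\ia'\colon_{R'}I')$ is an $(s-g+2)$-residual intersection of $I'$ of height two and that $R'/J'\simeq R/(J+(x))$ with $(x)$ a regular sequence on $R/J$ (this last point requires that $x$ stays a regular sequence modulo $J$, which follows because $\hei J\geq s>g-2$ and $x$ is part of a system of parameters avoiding the associated primes of $J$ — this is the kind of elementary depth-chasing I would not spell out in full). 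Then $R/J$ is Cohen-Macaulay of dimension $d-s$ iff $R'/J'$ is Cohen-Macaulay of dimension $(d-(g-2))-(s-(g-2))=d-s$, reducing to the height-two case.

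For the height-two case itself, the plan is: by Theorem~\ref{Theorem3.9}, $\SD_1$ (which implies $\SD$) gives that $_0\ZC_\bullet^+$ is acyclic and $R/K$ is Cohen-Macaulay of codimension $s$, where $K$ is the disguised residual. By Proposition~\ref{Proposition4.4}, under $\SD_1$ there is an epimorphism $H_0(_{s-1}^{\ \omega}\ZC_\bullet^+)\twoheadrightarrow\omega_{R/K}$. The crucial remaining step is to upgrade the inclusion $K\subseteq J$ (from \cite[Theorem~2.11]{Ha12}, together with $\sqrt K=\sqrt J$) to an equality $K=J$. I would argue this by localizing: since $R/K$ is unmixed of dimension $d-s$ (being Cohen-Macaulay) and $\sqrt K=\sqrt J$, and $J/K$ is a module over $R/K$, it suffices to show $(J/K)_\point=0$ for every associated prime $\point$ of $R/K$, i.e. every minimal prime of $J$. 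At such a $\point$ we have $\hei(\point)=s$ and $I_\point=R_\point$ is forced... no — more carefully, at a minimal prime $\point$ of $J$, $J_\point$ is $\point R_\point$-primary, and one uses that $\ia_\point\colon I_\point=J_\point$ with the residual being (generically) arithmetic there, or invokes \cite[Theorem~2.11]{Ha12} which already gives $K=J$ whenever the residual is arithmetic and one knows $\depth$ conditions hold after localization; combining $K\subseteq J$, $\sqrt K=\sqrt J$, and $R/K$ Cohen-Macaulay (hence $J/K$ has no embedded support and vanishes at all minimal primes of $K=\sqrt{J}$-primes) forces $J/K=0$. Once $K=J$, we conclude $R/J=R/K$ is Cohen-Macaulay of dimension $d-s$, completing the height-two case and hence the theorem.

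\textbf{Main obstacle.} I expect the genuinely delicate point to be the identity $K=J$ — i.e. showing the disguised residual intersection is the actual residual intersection under merely $\SD_1$ without any $G_s$ or arithmetic hypothesis. Everything else (the height reductions, invoking acyclicity of $_0\ZC_\bullet^+$, the epimorphism onto $\omega_{R/K}$) is bookkeeping built on the already-quoted results, but extracting $K=J$ requires genuinely using the structure of $H_0(_{s-1}^{\ \omega}\ZC_\bullet^+)$ as a faithful-enough module over $R/K$, likely via the surjection onto $\omega_{R/K}$ and the fact that $J$ annihilates the higher $H_0(_k^\omega\ZC_\bullet^+)$ (Proposition~\ref{Proposition3.5}) together with the unmixedness of $R/K$.
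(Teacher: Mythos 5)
Your overall plan — reduce to $g\geq 2$ by adjoining indeterminates (Remark~\ref{Remark3.8}), then to $g=2$ by factoring out a regular sequence $\aa\subset\ia$ of length $g-2$ which is part of a minimal generating set of $\ia$, then analyze the height-two case via Proposition~\ref{Proposition4.4} — is exactly the paper's. (One small slip: you describe $\aa$ as "a regular sequence on $R/J$", but $\aa\subset\ia\subset\ia\colon I=J$, so $\aa$ is zero in $R/J$; the reduction is simply $R/J\simeq(R/\aa)/(J/\aa)$ with $J/\aa$ an $(s-g+2)$-residual intersection of the height-two ideal $I/\aa$.)

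The argument you actually write for $K=J$ in the height-two case, however, has a genuine gap. You propose to localize at the minimal primes $\point$ of $J$ (equivalently of $K$, since $\sqrt K=\sqrt J$) and invoke the known equality $K=J$ for \emph{arithmetic} residuals. But nothing in the hypotheses forces the localized residual $\ia_\point\colon I_\point$ to be arithmetic: absent $G_s$ or a geometric hypothesis, $\point$ may well contain $I$, and then $\mu_{R_\point}((I/\ia)_\point)$ can be arbitrarily large. Your closing clause "combining $K\subseteq J$, $\sqrt K=\sqrt J$, and $R/K$ Cohen-Macaulay \ldots forces $J/K=0$" is a non-sequitur: unmixedness of $K$ tells you nothing about why $J_\point/K_\point$ should vanish at those minimal primes — that vanishing is precisely the content of the statement, not a consequence of support considerations.

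The correct mechanism is the one you hypothesize in your "Main obstacle" paragraph but never carry out, and it is both simpler and strictly more direct than localizing. Proposition~\ref{Proposition3.5} already gives $J\subset\ann_R\bigl(H_0(_{s-1}^{\quad\omega}\ZC_\bullet^+)\bigr)$. The surjection $\phi:H_0(_{s-1}^{\quad\omega}\ZC_\bullet^+)\twoheadrightarrow\omega_{R/K}$ of Proposition~\ref{Proposition4.4} (which only needs $\SD_1$) gives $\ann_R\bigl(H_0(_{s-1}^{\quad\omega}\ZC_\bullet^+)\bigr)\subset\ann_R(\omega_{R/K})$. And because $R/K$ is Cohen-Macaulay with canonical module $\omega_{R/K}$, that module is faithful over $R/K$, so $\ann_R(\omega_{R/K})=K$. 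Chaining these three inclusions yields $J\subset K$, and with $K\subset J$ from \cite[Theorem~2.11]{Ha12}, $K=J$. No localization or arithmetic hypothesis is needed; the fact that this works under bare $\SD_1$ is exactly the new content of the theorem.
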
 
\begin{proof} 
Let $K$ be the disguised $s$-residual intersection of $I$ w.r.t. $\ia.$ Since $I$ satisfies $\SD_1,$ hence $\SD, \ R/K$ is Cohen-Macaulay of dimension $d-s$ by Theorem~\ref{Theorem3.9} and $K\subset J$ by \cite[Theorem~2.11]{Ha12}. The proof will be completed by showing that $J\subset K.$ 
	
\smallskip
We first consider the case where $g=2.$ By Proposition~\ref{Proposition4.4}, there is the epimorphism 
\begin{displaymath}
\phi: H_0( _{s-1}^{\quad \omega}\ZC_\bullet^+) \longrightarrow \omega_{R/K}.
\end{displaymath}
As $R/K$ is Cohen-Macaulay, $\ann_R(\omega_{R/K})=\ann_R(R/K)=K.$ The epimorphism $\phi$ implies that
\begin{displaymath}
\ann_R(H_0( _{s-1}^{\quad \omega}\ZC_\bullet^+) ) \subset \ann_R(\omega_{R/K}) =K.
\end{displaymath}
By Proposition~\ref{Proposition3.5}, $J\subset \ann_R(H_0( _{s-1}^{\quad \omega}\ZC_\bullet^+) ) \subset K.$

\smallskip
We may always reduce to the case $g\geq 2$ by Remark~\ref{Remark3.8}. If $g>2,$ then we can choose a regular sequence $\aa$ of length $g-2$ inside $\ia$ which is a part of a minimal generating set of $\ia.$ Since $R$ is Cohen-Macaulay, by \cite[Theorem~2.1.3]{BH98}, $R/\aa$ is a Cohen-Macaulay local ring of dimension $d-g+2.$ Moreover,  $J/\aa=\ia/\aa \colon I/\aa$ and $\mu(\ia/\aa)=\mu(\ia)-g+2,$  therefore $J/\aa$ is an $(s-g+2)$-residual intersection of $I/\aa$ which is of height 2. Furthermore, $I/\aa$ satisfies $\SD_1$ by Proposition~\ref{Proposition4.1}.  Hence, it follows from the height two case that  $R/J\simeq (R/\aa)/(J/\aa)$ is Cohen-Macaulay of dimension $d-s.$ 
\end{proof}

It follows from the proof of Proposition~\ref{Proposition3.5} that $J \subset \ann_R(\Sym_R^k(I/\ia)),$ for all $k\geq 1.$  Then a natural question is: under what conditions one has 
$$\ann_R(\Sym_R^k(I/\ia))=J?$$
It is known that $\ann_R(\Sym_R^k(I/\ia))=J,$ for all $k\geq ,1$ whenever $J$ is arithmetic in \cite[Corollary~2.8(iv)]{HN16}. The next result answers this question.
\begin{cor} \label{Corollary4.6}
Let $(R,\m)$ be a Cohen-Macaulay local ring of dimension $d,$ with canonical module $\omega,$ and let $\ia\subset I$ be two ideals of $R,$ with $\hei(I)=g.$ Suppose that $J$ is an $s$-residual intersection of $I$ and let $1\leq k\leq s-g+1.$ 
\begin{enumerate}
\item [\em (i)] If $I$ satisfies $\SD_1,$ then $\Sym_R^k(I/\ia)$ is a faithful $R/J$-module.
\item [\em (ii)] If $I$ satisfies strongly Cohen-Macaulay, then $\Sym_R^k(I/\ia)$ is a maximal Cohen-Macaulay faithful $R/J$-module. 
\end{enumerate}
\end{cor}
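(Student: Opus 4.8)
The plan is to deduce both assertions from the machinery already assembled: Proposition~\ref{Proposition3.5}, Theorem~\ref{Theorem4.5}, and the acyclicity statements of Theorem~\ref{Theorem3.9}. First I would record the general inclusion $J\subset\ann_R(\Sym_R^k(I/\ia))$ for $k\geq 1$, which was established inside the proof of Proposition~\ref{Proposition3.5} via Lemma~\ref{Lemma3.4} (the chain $\ann_R(I/\ia)\subset\ann_R(\Sym_R^k(I/\ia))$). Thus $\Sym_R^k(I/\ia)$ is in any case an $R/J$-module, and the content of (i) is the reverse inclusion $\ann_R(\Sym_R^k(I/\ia))\subset J$.

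For (i), the key point is that under $\SD_1$ one has $K=J$ by Theorem~\ref{Theorem4.5}, where $K$ is the disguised residual intersection, and that $R/K=H_0(_0\ZC_\bullet^+)$ is Cohen-Macaulay of codimension $s$ with $_0\ZC_\bullet^+$ acyclic (Theorem~\ref{Theorem3.9}, since $\SD_1$ implies $\SD$). I would argue as follows: by Proposition~\ref{Proposition3.3} there is a natural epimorphism $\Sym_R^k(I/\ia)\twoheadrightarrow H_0(_k^R\ZC_\bullet^+)=H_0(_k\ZC_\bullet^+)$, and by Theorem~\ref{Theorem3.9} (for $1\leq k\leq s-g+1\leq s-g+2$) the module $\Sym_R^k(I/\ia)$ is itself Cohen-Macaulay of codimension $s$; in particular it is a maximal Cohen-Macaulay $R/\ann_R(\Sym_R^k(I/\ia))$-module supported on a set of dimension $d-s$. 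Since $J\subset\ann_R(\Sym_R^k(I/\ia))$ and $R/J$ is Cohen-Macaulay of dimension $d-s$ by Theorem~\ref{Theorem4.5}, the inclusion $R/J\twoheadrightarrow R/\ann_R(\Sym_R^k(I/\ia))$ is a surjection of Cohen-Macaulay rings of the same dimension $d-s$; to upgrade this to an equality I would show $\dim R/\ann_R(\Sym_R^k(I/\ia))=\dim R/J$ forces $\ann_R(\Sym_R^k(I/\ia))$ and $J$ to have the same minimal primes, and then use that $J$ is unmixed. Unmixedness of $J$ under $\SD_1$ is exactly what follows from $R/J$ being Cohen-Macaulay (Theorem~\ref{Theorem4.5}); hence $J=\bigcap_{\point\in\mathrm{Min}(R/J)}\point^{\,(R_\point)}$-style unmixedness gives $J\supseteq\ann_R(\Sym_R^k(I/\ia))$ once the minimal primes agree. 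The agreement of minimal primes I would get by localizing: at a minimal prime $\point$ of $J$ with $\hei\point=s$, one has $\ia_\point=I_\point$ is impossible (else $J_\point=R_\point$), so $(I/\ia)_\point\neq 0$; but also $(R/J)_\point$ is Artinian, and the acyclic complex $(_k\ZC_\bullet^+)_\point$ together with $\Sym^k_{R_\point}((I/\ia)_\point)\twoheadrightarrow H_0$ shows $\Sym^k_{R_\point}((I/\ia)_\point)\neq 0$, so $\point\in\Supp\Sym_R^k(I/\ia)$. Conversely any associated prime of $\Sym_R^k(I/\ia)$ contains $J$ and, since this module is Cohen-Macaulay of codimension $s$, has height exactly $s$, hence is a minimal prime of $J$. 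Therefore $\mathrm{Min}(\ann_R(\Sym_R^k(I/\ia)))=\mathrm{Min}(J)$ and, $\ann_R(\Sym_R^k(I/\ia))$ being unmixed of codimension $s$ as well (Cohen-Macaulay support), we conclude $\ann_R(\Sym_R^k(I/\ia))=J$.

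For (ii), strong Cohen-Macaulayness implies $\SD_1$, so (i) applies and $\Sym_R^k(I/\ia)$ is faithful over $R/J$; moreover condition (iii) of Theorem~\ref{Theorem3.9} holds, giving directly that $\Sym_R^k(I/\ia)$ is Cohen-Macaulay of codimension $s$, i.e. of dimension $d-s=\dim R/J$, hence a maximal Cohen-Macaulay $R/J$-module. I expect the main obstacle to be the bookkeeping in (i) that promotes "same dimension and $J\subset\ann$" to "equality", i.e. correctly invoking unmixedness of both $J$ and $\ann_R(\Sym_R^k(I/\ia))$; once one knows both ideals are unmixed of the same codimension $s$ with the same minimal primes, equality is forced, but one must be careful to extract unmixedness of $J$ from Theorem~\ref{Theorem4.5} and unmixedness of $\ann_R(\Sym_R^k(I/\ia))$ from the Cohen-Macaulayness of $\Sym_R^k(I/\ia)$ furnished by Theorem~\ref{Theorem3.9}.
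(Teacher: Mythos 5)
There is a genuine gap in your argument for part (i). You invoke Theorem~\ref{Theorem3.9} to claim that $\Sym_R^k(I/\ia)$ is Cohen-Macaulay of codimension $s$ for $1\leq k\leq s-g+1$ under $\SD_1$ alone, but that theorem requires (when $r+k\geq s+1$) the extra hypothesis $\depth(Z_i)\geq d-s+k$ for $0\leq i\leq k$. The condition $\SD_1$ only yields $\SDC_2$, i.e. $\depth(Z_i)\geq\min\{d-r+i+2,\,d-g+2,\,d\}$, which falls short of $d-s+k$ as soon as $r$ exceeds $s-k+2$; in particular for $k=s-g+1$ this would force $r\leq g+1$. So under $\SD_1$ alone you simply do not know that $\Sym_R^k(I/\ia)$ is Cohen-Macaulay, and your entire unmixedness argument collapses. (Indeed, the paper only asserts Cohen-Macaulayness of these symmetric powers in part (ii), under the strongly Cohen-Macaulay hypothesis, precisely because Theorem~\ref{Theorem3.9}(iii) applies there.) There is also a secondary weakness: even granting Cohen-Macaulayness, ``$\ann_R(\Sym_R^k(I/\ia))$ and $J$ have the same minimal primes and both are unmixed of height $s$'' does not by itself give equality of ideals; you would still need $(\ann_R(\Sym_R^k(I/\ia)))_\point=J_\point$ at each minimal prime $\point$, which is essentially the local version of the statement you are trying to prove.

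The paper takes a different and shorter route. It first observes, via Lemma~\ref{Lemma3.4}, that the annihilators $\ann_R(\Sym_R^k(I/\ia))$ form a nondecreasing chain in $k$, so it suffices to show $\ann_R(\Sym_R^{s-g+1}(I/\ia))\subset J$. After reducing to $g=2$ by the regular-sequence trick of Theorem~\ref{Theorem4.5}, it chains together: $\ann_R(\Sym_R^{s-1}(I/\ia))\subset\ann_R(\Sym_R^{s-1}(I/\ia)\otimes_R\omega)\subset\ann_R\bigl(H_0(_{s-1}^{\;\;\omega}\ZC_\bullet^+)\bigr)$ (from the epimorphism of Proposition~\ref{Proposition3.3} as in the proof of Proposition~\ref{Proposition3.5}), then $\ann_R\bigl(H_0(_{s-1}^{\;\;\omega}\ZC_\bullet^+)\bigr)\subset\ann_R(\omega_{R/K})=K$ via the epimorphism $\phi$ of Proposition~\ref{Proposition4.4} and Cohen-Macaulayness of $R/K$, and finally $K=J$ from the proof of Theorem~\ref{Theorem4.5}. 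This only uses $\SD_1$, through Proposition~\ref{Proposition4.4} and the acyclicity of $_0\ZC_\bullet^+$, and never needs Cohen-Macaulayness of the higher symmetric powers. Your proof of (ii) is fine once (i) is in place, and indeed matches the paper's one-line deduction.
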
 
\begin{proof}
(i) The proof will be completed by showing that $\ann_R(\Sym_R^{s-g+1}(I/\ia))\subset J.$  As in the proof of Theorem~\ref{Theorem4.5}, it suffices to prove that $\ann_R(\Sym_R^{s-g+1}(I/\ia))\subset J$  in the case $g=2.$ 
The inclusions $\ann_R(\Sym_R^{s-1}(I/\ia))\subset \ann_R(H_0( _{s-1}^{\quad \omega}\ZC_\bullet^+) )\subset K=~J$ are demonstrated in the proofs of Proposition~\ref{Proposition3.5} and of Theorem~\ref{Theorem4.5}.

\smallskip
(ii) follows immediately from Theorem~\ref{Theorem3.9}, Theorem~\ref{Theorem4.5} and the first item.
\end{proof}

The following example shows that the above corollary does not hold for the $(s-g+2)$-th symmetric power of $I/\ia.$
\begin{exem} \cite[Example~2.10]{HN16}
Let $R=\Q[x,y], I=(x,y)$ and $\ia=(x^2,y^2).$ We set $J=\ia\colon_R I.$ Using  {\tt Macaulay2} \cite{Macaulay2}, we see that $J=(x^2, xy, y^2)$ is a 2-residual intersection (a link in this case) of $I$ and 
\begin{align*}
\Sym_R(I/\ia)\simeq R[T_1,T_2]/\big(xT_1,yT_2,-yT_1+xT_2\big).
\end{align*}
Thus a free resolution of $\Sym_R^2(I/\ia)$ is
\begin{align*}
\xymatrix{ 0 \ar[r] & R^3 \ar[r]^N   & R^6 \ar[r]^M & R^3\ar[r] & \Sym_R^2(I/\ia) \ar[r] &0, }
\end{align*}
where 
\begin{align*}
M=\begin{pmatrix} x & 0 &0 & y &0 &0 \\ 0 & x & 0 & 0 & y &0 \\ 0 & 0 & x & 0 &0 & y
		\end{pmatrix} \quad \text{and} \quad  
N=\begin{pmatrix} -y & 0 &0 \\ 0 & -y &0 \\ 0 &0 & -y\\ x &  0 &0\\ 0 & x & 0 \\ 0 & 0 &  x
\end{pmatrix}.
\end{align*}
It follows that
\begin{align*}
\ann_R(\Sym_R^2(I/\ia))=(x,y)\varsupsetneq J.
\end{align*}
\end{exem}

We now give a description the canonical module of residual intersections. 
\begin{theo} \label{Theorem4.8}
Let $(R,\m)$ be a Cohen-Macaulay local ring of dimension $d,$ with canonical module $\omega,$ and let $\ia\subset I$ be two ideals of $R,$ with $\hei(I)=g.$ Suppose that $I$ satisfies $\SD_2, \ \Tor_1^R(R/I,\omega)=0$ and $J=(\ia\colon_R I)$ is an $s$-residual intersection of $I.$ Then the canonical module of $R/J$ is $\Sym_R^{s-g+1}(I/\ia)\otimes_R \omega.$
\end{theo}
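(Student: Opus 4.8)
The plan is to reduce to the case $\hei(I)=2$, exactly as in the proof of Theorem~\ref{Theorem4.5}, and in that case to read off the canonical module by chaining together Proposition~\ref{Proposition4.4}, Theorem~\ref{Theorem4.5} and Proposition~\ref{Proposition3.3}.

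First I would treat the case $g=2$. Since $\SD_2$ implies $\SD_1$, Theorem~\ref{Theorem4.5} shows that $R/J$ is Cohen-Macaulay of dimension $d-s$ and that the disguised $s$-residual intersection $K$ of $I$ with respect to $\ia$ equals $J$. Running Proposition~\ref{Proposition4.4} under the stronger hypothesis $\SD_2$, the epimorphism $\phi$ produced there is an isomorphism, so $\omega_{R/J}=\omega_{R/K}\simeq H_0({}_{s-1}^{\quad\omega}\ZC_\bullet^+)$. On the other hand, Proposition~\ref{Proposition3.3} applied with $M=\omega$ and $k=s-1$ (which is $\geq 1$ because $s\geq g=2$) provides a natural epimorphism $\Sym_R^{s-1}(I/\ia)\otimes_R\omega\twoheadrightarrow H_0({}_{s-1}^{\quad\omega}\ZC_\bullet^+)$, and this is an isomorphism precisely because we assume $\Tor_1^R(R/I,\omega)=0$. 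Since $s-1=s-g+1$ when $g=2$, composing the two isomorphisms gives the asserted description $\omega_{R/J}\simeq\Sym_R^{s-g+1}(I/\ia)\otimes_R\omega$.

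Next I would handle arbitrary $g$ by descent. Using Remark~\ref{Remark3.8} one may first assume $g\geq 2$: adjoining an indeterminate $x$ to $R,\ I$ and $\ia$ replaces $(g,s)$ by $(g+1,s+1)$, so $s-g+1$ is unchanged, $\SD_2$ is preserved, and $\Tor_1^R(R/I,\omega)=0$ is preserved by flat base change along $R\hookrightarrow R[x]_{\m+(x)}$, while both sides of the asserted isomorphism are compatible with this base change. If $g>2$, I would pick, exactly as in the proof of Theorem~\ref{Theorem4.5}, a regular sequence $\aa$ of length $g-2$ contained in $\ia$ and forming part of a minimal generating set of $\ia$, and pass to $R^\prime=R/(\aa)$. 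Then $R^\prime$ is Cohen-Macaulay with canonical module $\omega^\prime=\omega/\aa\omega$ (as $\omega$ is a maximal Cohen-Macaulay $R$-module, $\aa$ is $\omega$-regular and $\omega/\aa\omega$ is the canonical module of $R/(\aa)$), $I^\prime=IR^\prime$ has height $2$, $J^\prime=JR^\prime=(\ia R^\prime\colon_{R^\prime}I^\prime)$ is an $(s-g+2)$-residual intersection of $I^\prime$, and $I^\prime$ satisfies $\SD_2$ by Proposition~\ref{Proposition4.1}. Since $\aa\subset\ia$ annihilates $\Sym_R^{s-g+1}(I/\ia)$, one has $\Sym_{R^\prime}^{s-g+1}(I^\prime/\ia R^\prime)=\Sym_R^{s-g+1}(I/\ia)$ and $\Sym_R^{s-g+1}(I/\ia)\otimes_R\omega=\Sym_R^{s-g+1}(I/\ia)\otimes_{R^\prime}\omega^\prime$, while $\omega_{R^\prime/J^\prime}=\omega_{R/J}$; so the case $g=2$ applied over $R^\prime$ (with $s$ replaced by $s-g+2$) yields exactly the statement over $R$.

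I expect the one genuinely delicate point to be the descent of the hypothesis $\Tor_1^R(R/I,\omega)=0$ to $R^\prime$, which is where that hypothesis is really used in the reduction. For a single $x\in\aa$: since $x\in I$ the module $R/I$ is an $R/(x)$-module, and since $x$ is both $R$-regular and $\omega$-regular the change-of-rings spectral sequence $\Tor_p^{R/(x)}\!\big(\Tor_q^R(R/(x),\omega),\,R/I\big)\Rightarrow\Tor_{p+q}^R(\omega,R/I)$ degenerates (only the row $q=0$ survives), giving $\Tor_1^{R/(x)}(\omega/x\omega,\,R/I)\simeq\Tor_1^R(\omega,R/I)=0$. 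Iterating over the $g-2$ members of $\aa$ — at each step working over a Cohen-Macaulay quotient whose canonical module is the corresponding quotient of $\omega$ — gives $\Tor_1^{R^\prime}(\omega^\prime,R^\prime/I^\prime)=0$, which is exactly what is needed to run Proposition~\ref{Proposition3.3} over $R^\prime$. Everything else in the reduction is bookkeeping, so this descent is the main obstacle; the conceptual heart of the theorem is already packaged in Proposition~\ref{Proposition4.4} and Theorem~\ref{Theorem4.5}.
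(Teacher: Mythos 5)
Your proof is correct and follows essentially the same route as the paper: treat the height-two case by chaining Proposition~\ref{Proposition4.4} (now an isomorphism under $\SD_2$), Theorem~\ref{Theorem4.5} (giving $K=J$), and Proposition~\ref{Proposition3.3} (using $\Tor_1^R(R/I,\omega)=0$), then reduce the general case to height two by adjoining a variable if needed and killing a regular sequence $\aa$ of length $g-2$ inside $\ia$. The only difference from the paper is that you spell out the change-of-rings descent of $\Tor_1^R(R/I,\omega)=0$ and the bookkeeping of symmetric powers and tensor products modulo $\aa$ in more detail than the paper does, which is a welcome elaboration rather than a deviation.
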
 

\begin{proof}
We first consider the case where $g=2.$ By Proposition~\ref{Proposition4.4} and Theorem~\ref{Theorem4.5},
\begin{displaymath}
\omega_{R/J}\simeq H_0( _{s-1}^{\quad \omega}\ZC_\bullet^+)\simeq \Sym_R^{s-1}(I/\ia)\otimes_R \omega.
\end{displaymath}
The last isomorphism by Proposition~\ref{Proposition3.3}.

\smallskip 
We may always reduce to the case $g\geq 2$ by Remark~\ref{Remark3.8}. If $g>2,$ then we can choose a regular sequence $\aa$ of length $g-2$ inside $\ia$ which is a part of a minimal generating set of $\ia$ as in the proof of Theorem~\ref{Theorem4.5}. As $\aa\subset I$ is regular on $\omega,$ 
$$\Tor_1^R(R/I,\omega)\simeq \Tor_1^{R/\aa}(R/I, \omega/\aa\omega)=0.$$

\smallskip
Furthermore, observing that the canonical module of $R/\aa$ is $ \omega/\aa\omega,$ it follows from the height two case that 
\begin{align*}
\omega_{R/J}\simeq \Sym_{R/\aa}^{(s-g+2)-1}\Big(\dfrac{I/\aa}{\ia/\aa}\Big)\otimes_{R/\aa} \omega_{R/\aa}\simeq\Sym_R^{s-g+1}(I/\ia)\otimes_R\omega.
\end{align*}
\end{proof}

Notice that the hypothesis $\Tor_1^R(R/I,\omega)=0$ is always satisfied for ideals of finite projective dimension. In particular,  if $R$ is Gorenstein, then $\omega\simeq R,$ hence  $\Tor_1^R(R/I,\omega)\simeq \Tor_1^R(R/I,R)=0,$ therefore  the canonical module of $R/J$ is $(s-g+1)$-th symmetric power of  $I/\ia.$ As a consequence,  the second conjecture in the introduction is proved under the $\SD_2$ condition.
\begin{rem} \label{Remark4.9}
\item [(i)] Under assumptions of Theorem~\ref{Theorem4.8}, but $I$ only satisfies $\SD_1$  instead of $\SD_2.$ Then there exists an epimorphism of $R$-modules
$$\xymatrix{\Sym_R^{s-g+1}(I/\ia)\otimes_R \omega\ar@{->>}[r]& \omega_{R/J}}.$$
\item[(ii)] In the height two  case, by using Propositon~\ref{Proposition4.4}, we could omit the assumption $\Tor_1^R(R/I,\omega)=0$ in Theorem~\ref{Theorem4.8}. In this case, the canonical module of $R/J$ is the $(s-1)$-th graded component of  
$$\omega[T_1,\ldots,T_r]/ (\mathfrak{L}\omega+\mathfrak{L'}+ \gamma\omega)$$
by Proposition~\ref{Proposition3.1} and Theorem~\ref{Theorem4.5}.
\end{rem}

The following example shows that Theorem~\ref{Theorem4.8} does not hold if $I$ only satisfies $\SD$ condition. 
\begin{exem} \cite[Example~2.9]{EU16}
Let $R=k[[x_1,\ldots,x_5]]$ and let $I$ be the ideal of $2\times 2$ minors of the matrix
\begin{align*}
\begin{pmatrix}x_1 &x_2 &x_3 &x_4 \\ x_2 & x_3 &x_4 & x_5
\end{pmatrix}.
\end{align*}
Then $I$ is of height 3. If we take $\ia$ to be the ideal generated by 4 sufficiently general cubic forms in $I,$ then $J=\ia\colon_R I$ is a 4-residual intersection. Using  {\tt Macaulay2} \cite{Macaulay2}, it is easy to see that $I$ satisfies $\SD.$ Moreover, we see that $I^2/\ia I$ requires  20 generators, whereas $\omega_{R/J}$ requires only 16. Thus there is no surjection $\xymatrix{\omega_{R/J}\ar@{->>}[r]& I^2/\ia I,}$ therefore $\omega_{R/J}$ is not isomorphic to $\Sym_R^2(I/\ia).$

\smallskip
Computation of the initial degree  of $\Sym_R^2(I/\ia)$ and $\omega_{R/J}$ shows that there can be no surjection $\xymatrix{\Sym_R^2(I/\ia) \ar@{->>}[r]&\omega_{R/J}}.$ This shows that  $\SD_1$ condition in Remark~\ref{Remark4.9}(i) is necessary.
\end{exem}

Recall that in a Noetherian local ring $(R,\m),$ the \textit{type} of a finitely generated $R$-module $M$ is the dimension of the $R/\m$-vector space $\Ext_R^{\depth(M)}(R/\m,M)$ and it is denoted by $r_R(M)$ or just $r(M).$ The minimal number of generators of the $R$-module $M$ is the dimension of the $R/\m$-vector space $R/\m\otimes_R M$ and it is denoted by $\mu(M).$ Notice that if $M,N$ are two finitely generated $R$-modules, then 
\begin{align*}
\mu(M\otimes_R N)&=\dim_{R/\m}(M\otimes_R N\otimes_R R/\m)\\
& =\dim_{R/\m}(M\otimes_R R/\m \otimes_{R/\m} N\otimes_R R/\m)\\
&=\mu(M)\mu(N).
\end{align*}

\begin{cor} \label{Corollary4.11}
Under the assumptions of Theorem~\ref{Theorem4.8},
\begin{displaymath}
r(R/J)=\binom{\mu(I/\ia)+s-g}{\mu(I/\ia)-1}r(R).
\end{displaymath}
Thus $R/J$ is Gorenstein if and only if $R$ is Goenstein and $\mu (I/\ia )=1$.
\end{cor}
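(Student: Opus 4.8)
The statement is about computing the type of $R/J$ from Theorem~\ref{Theorem4.8}. The plan is to use the description $\omega_{R/J}\simeq \Sym_R^{s-g+1}(I/\ia)\otimes_R\omega$ together with the fact that for a Cohen-Macaulay local ring $R$ with canonical module $\omega$ and a maximal Cohen-Macaulay module $N$, the type of $N$ equals the minimal number of generators of $\omega_N=\Hom_R(N,\omega)$. First I would recall the general principle: for a Cohen-Macaulay $R/J$-module $N$ that is maximal Cohen-Macaulay over $R/J$, one has $r_{R/J}(N)=\mu_{R/J}(\omega_N)$, and in particular $r(R/J)=\mu_{R/J}(\omega_{R/J})$ since $\omega_{\omega_{R/J}}\simeq R/J$. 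Applying this with $\omega_{R/J}\simeq\Sym_R^{s-g+1}(I/\ia)\otimes_R\omega$ gives
\begin{displaymath}
r(R/J)=\mu(\omega_{R/J})=\mu\big(\Sym_R^{s-g+1}(I/\ia)\otimes_R\omega\big)=\mu\big(\Sym_R^{s-g+1}(I/\ia)\big)\cdot\mu(\omega),
\end{displaymath}
using the multiplicativity of $\mu$ under tensor products recalled just before the corollary, and noting $\mu(\omega)=r(R)$ since $R$ is Cohen-Macaulay with canonical module $\omega$.

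\textbf{Key steps.} It remains to compute $\mu\big(\Sym_R^{s-g+1}(I/\ia)\big)$. Here I would argue that $I/\ia$ is, after tensoring with the residue field $k=R/\m$, a $k$-vector space of dimension $\mu(I/\ia)$, and that $\Sym$ commutes with base change, so that
\begin{displaymath}
\Sym_R^{s-g+1}(I/\ia)\otimes_R k\simeq \Sym_k^{s-g+1}\big((I/\ia)\otimes_R k\big)\simeq \Sym_k^{s-g+1}\big(k^{\mu(I/\ia)}\big),
\end{displaymath}
whose dimension over $k$ is $\binom{\mu(I/\ia)+(s-g+1)-1}{(s-g+1)-1}=\binom{\mu(I/\ia)+s-g}{\mu(I/\ia)-1}$, the standard count of degree-$(s-g+1)$ monomials in $\mu(I/\ia)$ variables. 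Combining, $r(R/J)=\binom{\mu(I/\ia)+s-g}{\mu(I/\ia)-1}\,r(R)$. For the Gorenstein characterization, recall that $R/J$ is Gorenstein if and only if it is Cohen-Macaulay (which holds by Theorem~\ref{Theorem4.5}) and $r(R/J)=1$. The product $\binom{\mu(I/\ia)+s-g}{\mu(I/\ia)-1}\,r(R)$ equals $1$ exactly when both factors are $1$; $r(R)=1$ means $R$ is Gorenstein, and the binomial coefficient equals $1$ precisely when $\mu(I/\ia)-1=0$ or $\mu(I/\ia)=\mu(I/\ia)+s-g$, i.e.\ when $\mu(I/\ia)=1$ (recall $\mu(I/\ia)\geq 1$ since $\hei(I+J)$ considerations force $I\neq\ia$, or more simply since $g\leq s$ forces a residual that is not a direct equality in the relevant cases — in any event $\mu(I/\ia)=0$ would make $J=R$, contradicting properness) or when $s=g$, but even when $s=g$ the coefficient is $\binom{\mu(I/\ia)}{\mu(I/\ia)-1}=\mu(I/\ia)$, which is $1$ iff $\mu(I/\ia)=1$. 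So in all cases the condition reduces to $R$ Gorenstein and $\mu(I/\ia)=1$.

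\textbf{Main obstacle.} I expect the only delicate point is justifying $r_{R/J}(\omega_{R/J})=\mu_{R/J}(\omega_{R/J})$ together with the identification of $\omega_{R/J}$ with $\Hom_R(R/J,\omega)$ after the reduction steps — but these are standard facts from \cite[Chapter~3]{BH98} about canonical modules over Cohen-Macaulay rings, valid because $R/J$ is Cohen-Macaulay by Theorem~\ref{Theorem4.5}. One must also make sure that the base-change isomorphism $\Sym_R^k(M)\otimes_R k\simeq\Sym_k^k(M\otimes_R k)$ is applied correctly; this is a completely general property of symmetric powers and right-exactness of $\otimes$, so no subtlety arises. Everything else is the bookkeeping with binomial coefficients already indicated.
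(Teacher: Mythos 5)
Your proposal follows essentially the same path as the paper's proof: identify $r(R/J)=\mu(\omega_{R/J})$ (Proposition~3.3.11 of \cite{BH98}), substitute $\omega_{R/J}\simeq \Sym_R^{s-g+1}(I/\ia)\otimes_R\omega$ from Theorem~\ref{Theorem4.8}, use the multiplicativity of $\mu$ under tensor products and the base-change isomorphism for symmetric powers to reduce to a monomial count, yielding the binomial coefficient. The Gorenstein characterization you spell out is correct and is what the paper leaves implicit.
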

\begin{proof}
Since the canonical module of $R/J$ is $\Sym_R^{s-g+1}(I/\ia)\otimes_R \omega$ by Theorem~\ref{Theorem4.8}, it follows from \cite[Proposition~3.3.11]{BH98} that
\begin{align*}
r(R/J)=\mu(\omega_{R/J})&=\mu(\Sym_R^{s-g+1}(I/\ia)\otimes_R \omega)\\
&=\mu(\Sym_R^{s-g+1}(I/\ia))\mu(\omega)\\
&=\dim_{R/\m}(\Sym_R^{s-g+1}(I/\ia)\otimes_R R/\m)r(R)\\
&=\dim_{R/\m}(\Sym_{R/\m}^{s-g+1}(I/\ia\otimes_R R/\m))r(R).
\end{align*}
Since $I/\ia\otimes_R R/\m$ is a $R/\m$-vector space of dimension $\mu(I/\ia),$
\begin{align*}
\Sym_{R/\m}(I/\ia\otimes_R R/\m)\simeq (R/\m)[Y_1,\ldots,Y_{\mu(I/\ia)}].
\end{align*}
It follows that 
$$r(R/J)=\binom{\mu(I/a)+s-g}{\mu(I/a)-1}r(R).$$
\end{proof}

%##################################Invariants of residual intersection#########

\section{Stability of Hilbert functions and Castelnuovo-Mumford regularity of residual intersections} \label{Section5}
One is based on the resolution of residual intersections $_0\ZC_\bullet^+,$ from which we could  provide many informations concerning $R/J,$ like the stability of Hilbert functions and the Castelnuovo-Mumford regularity of residual intersections.

\smallskip
First we study the stability of Hilbert functions of residual intersections. We recall the definitions of  the Hilbert function, Hilbert polynomial and Hilbert series, the reader can consult for instance \cite[Chapter~4]{BH98}. Let $M$ be a graded $R$-module whose graded components $M_n$ have finite length, for all $n.$ The numerical function $H(M,-): \Z\longrightarrow \Z$ with $H(M,n)=\length(M_n),$ for all $n\in \Z,$ is the Hilbert function, and $H_M(t):=\sum_{n\in \Z}H(M,n)t^n$ is the Hilbert series of $M.$\\

If $R$ is assumed to be generated over $R_0$ by elements of degree one, that is, $R=R_0[R_1]$ and $M$ is a finitely generated graded $R$-module of dimension $m\geq 1,$  then there exists a polynomial $P_M(X)\in \Q[X]$ of degree $m-1$ such that $H(M,n)=P_M(n)$ for all $n\gg 0.$ This polynomial is called the Hilbert polynomial of $M.$  We can write
\begin{displaymath}
P_M(X)=\sum_{i=0}^{m-1}(-1)^{m-1-i}e_{m-1-i}\binom{X+i}{i}.
\end{displaymath}
Then the \textit{multiplicity} of $M$ is defined to be
\begin{displaymath}
e(M)=\begin{cases} e_0 & \text{if}\quad m>0\\ \length(M) & \text{if}\quad m=0.
\end{cases}
\end{displaymath}

\smallskip
In  \cite{CEU01}, Eisenbud, Ulrich and the first named author restated an old question of Stanley in \cite{St80} asking for which open sets of ideals $\ia$ the Hilbert function of $R/\ia$ depends only on the degrees of the generators $\ia.$ More precisely, they  consider the following two conditions.
\begin{enumerate}
\item [(A1)] Is the Hilbert function of $R/\ia$ is constant on the open set of ideals $\ia$ generated by $s$ forms of the given degrees such that $\hei(\ia\colon_R I)\geq s;$
\item [(A2)] Is the Hilbert function of $R/(\ia\colon_R I)$ is constant on this set.
\end{enumerate}

\smallskip
It is shown in  \cite[Theorem~2.1]{CEU01} that ideals with some sliding depth conditions  in conjunction with $G_{s-1}$ or $G_s$ satisfy these two conditions. In \cite[Proposition~3.1]{HN16}, Hassanzadeh and the second named author proved that if $(R,\m)$ is a Cohen-Macaulay graded local ring of dimension $d$ over an Artinian local ring $R_0$ and if  $\ia \subset I$ are two homogeneous ideals, $I$ satisfies $\SD,$ and $\depth(R/I)\geq d-s,$ then the above condition (A1) is satisfied for any $s$-residual intersection $J=(\ia\colon_R I).$ It follows directly from \cite[Theorem~2.11]{Ha12} and \cite[Proposition~3.1]{HN16} that if $I$ satisfies $\SD,$ then, for any arithmetic $s$-residual intersection $J=(\ia\colon_R I),$ the above condition (A2) is satisfied. 

\smallskip
The next proposition, we will show that the above condition (A2) is satisfied for any residual intersection under $\SD_1$ condition.
\begin{pro} \label{Proposition5.1}
Let $(R,\m)$ be a graded Cohen-Macaulay local ring over an Artinian local ring $R_0$ and $\ia\subset I$ be two homogeneous ideals, with $\hei(I)=g.$ Suppose that $I$ satisfies $\SD_1$ and $J=(\ia\colon_R I)$ is an $s$-residual intersection of $I.$ Then the Hilbert function of $R/J$ satisfies the above condition (A2).
\end{pro}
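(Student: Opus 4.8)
The plan is to reduce the statement about the Hilbert function of $R/J$ to the already established Cohen-Macaulayness of the disguised residual intersection $R/K$ together with the fact that $K = J$ under $\SD_1$. The key observation is that, by Theorem~\ref{Theorem4.5} (and since $\SD_1$ implies $\SD$), we have $K = J$, so it suffices to show that the Hilbert function of $R/K$ depends only on the degrees of the generators of $\ia$. For this I would invoke the acyclicity of the complex $_0\ZC_\bullet^+$, which holds under $\SD_1$ by Theorem~\ref{Theorem3.9}: this gives a finite free-ish resolution of $H_0(_0\ZC_\bullet^+) = R/K$ whose terms are direct sums of shifted copies of Koszul cycles $Z_j(\ff;R)$, with multiplicities and shifts governed purely by $s$ and the degrees of the $\gamma_i$'s, hence by the degrees of the generators of $\ia$.

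First I would record that, as a complex of graded $R$-modules, $_0\ZC_i^+$ is built from $\D_i = \Tot(\ZC_\bullet(\ff;R)\otimes_S K_\bullet(\gamma;S))$ and its local cohomology $H_\ig^r(\D_\bullet)$ in the appropriate graded strand; the constituents are the Koszul cycles $Z_j = Z_j(\ff;R)$, each appearing with a binomial multiplicity $\binom{s}{i-j}$ and a degree shift determined by the $\deg\gamma_i = \deg a_i$. Since the ring $R$ (hence the modules $Z_j$, which only depend on $I$ and the chosen generators $f_i$, not on $\ia$) is fixed, the graded modules occurring in $_0\ZC_\bullet^+$ are the same for every $\ia$ with the prescribed generator degrees; only the differentials vary. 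Because $_0\ZC_\bullet^+$ is acyclic and its $0$-th homology is $R/K = R/J$, taking Euler characteristics of Hilbert series yields
\begin{displaymath}
H_{R/J}(t) = \sum_{i=0}^{s} (-1)^i\, H_{\,_0\ZC_i^+}(t),
\end{displaymath}
and the right-hand side is expressible entirely in terms of the Hilbert series of the $Z_j(\ff;R)$ and the degrees of the generators of $\ia$.

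The main obstacle is the bookkeeping: one must make precise exactly which shifted cycles $Z_j(\ff;R)(-\text{shift})^{\binom{s}{i-j}}$ occur in each $_0\ZC_i^+$, and check that every degree shift that enters is a $\Z$-linear combination of the $\deg a_i$ (coming from the Koszul complex $K_\bullet(\gamma;S)$) and nothing else — in particular that no data depending on the specific choice of the $a_i$ beyond their degrees leaks in. This is essentially the graded analogue of \cite[Lemma~2.1]{Ha12} and the strand computation already carried out in Section~\ref{Section3}; I would cite those and then simply assemble the alternating sum. The last step is to invoke $K = J$ from Theorem~\ref{Theorem4.5} to conclude that the formula computes $H_{R/J}(t)$, which by construction depends only on $I$ (through the $Z_j(\ff;R)$) and the degrees of the generators of $\ia$, i.e. condition (A2) holds.
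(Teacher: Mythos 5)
Your proposal follows essentially the same route as the paper: both invoke Theorem~\ref{Theorem3.9} for acyclicity of $_0\ZC_\bullet^+$ and Theorem~\ref{Theorem4.5} to identify $K=J$, then observe that the graded components of $_0\ZC_\bullet^+$ are direct sums of Koszul cycles $Z_j(\ff;R)$ twisted only by shifts coming from $K_\bullet(\gamma;S)$ (hence by the degrees of the generators of $\ia$), and conclude by taking the alternating sum of Hilbert series. Your writeup is slightly more explicit about the Euler-characteristic step and about why $R/K=R/J$ under $\SD_1$, but the substance matches the paper's argument.
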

\begin{proof} 
By Theorem~\ref{Theorem3.9} and Theorem~\ref{Theorem4.5}, the complex $_0\ZC_\bullet^+$ is a resolution of $R/J.$ Hence, the Hilbert function of $R/J$ can be written in terms of the Hilbert functions of the components of the complex $_0\ZC_\bullet^+$ which, according to the definition of $_0\ZC_\bullet^+,$  are just some direct sums of Koszul cycles of $I$ shifted by the twists appearing in the Koszul complex $K_\bullet(\gamma; S).$ Since the Hilbert functions of Koszul cycles are inductively calculated in terms of those of the Koszul homology modules, the Hilbert function of $R/J$ only depends on the Koszul homology modules of $I$ and on the degrees of the generators of $\ia.$
\end{proof}

Next, the important numerical invariant associated an algebraic or geometric object is the Castelnuovo-Mumford regularity. Assume that $R=\bigoplus_{n\geq 0}R_n$ is a positively graded Noetherian $^\ast$local ring of dimension $d$ over a Noetherian local ring $(R_0,\m_0).$ Set $\m=\m_0+R_+.$ Suppose that $I$ and $\ia$ are two homogeneous ideals of $R$ generated by homogeneous elements $f_1,\ldots,f_r$ and $a_1,\ldots,a_s,$ respectively. For a homogeneous ideal $\mathfrak{b},$ the sum of the degrees of a minimal generating set of $\mathfrak{b}$ is denoted by $\sigma(\mathfrak{b}).$ For a finitely generated graded $R$-module $M,$ the Castelnuovo-Mumford regularity of $M$ is defined as $\reg(M):=\max\{\ennd(H_{R_+}^i(M)) +i\}.$ In \cite{Ha12}, Hassanzadeh defined the regularity with respect to the maximal ideal $\m$ as $\reg_\m(M):=\max\{\ennd(H_\m^i(M))+i\}.$ He proved that
\begin{displaymath}
\reg(M)\leq \reg_\m(M)\leq \reg(M)+\dim(R_0),
\end{displaymath}
for any a finitely generated graded $R$-module $M,$ whenever $R$ is  a Cohen-Macaulay $^\ast\!$local ring, see \cite[Proposition~3.4]{Ha12}.

\smallskip
The next proposition improves \cite[Theorem~3.6]{Ha12} by removing
the arithmetic hypothesis of residual intersections.

\begin{pro} \label{Proposition5.2} 
Let $(R,\m)$ be a positively graded Cohen-Macaulay $^\ast\!$local ring over a Noetherian local ring $(R_0,\m_0)$ and $\ia\subset I$ be two homogeneous ideals, with $\hei(I)=g.$ Suppose that $I$ satisfies $\SD_1.$ Then, for any $s$-residual intersection $J=(\ia\colon_R I),$
\begin{align*}
\reg(R/J)\leq \reg(R)+\dim(R_0)+\sigma(\ia)-(s-g+1)\indeg(I/\ia) -s.
\end{align*}
\end{pro}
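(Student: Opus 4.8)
The plan is to use the acyclic complex $_0\ZC_\bullet^+$ as a graded free-type resolution of $R/J$ and to track the shifts carefully. By Theorem~\ref{Theorem3.9} and Theorem~\ref{Theorem4.5}, the $\SD_1$ hypothesis guarantees that $_0\ZC_\bullet^+$ is acyclic with $H_0(_0\ZC_\bullet^+)=R/J$, so it is a resolution of $R/J$ of length $s$ whose terms are, by the construction in Section~\ref{Section3}, direct sums of Koszul cycles $Z_j(\ff;R)$ twisted by the degrees coming from $K_\bullet(\gamma;S)$. The standard spectral-sequence (or iterated mapping-cone) estimate for the regularity of a module resolved by a complex gives
\begin{displaymath}
\reg_\m(R/J)\leq \max_i\{\reg_\m(_0\ZC_i^+)-i\},
\end{displaymath}
so the first step is to bound $\reg_\m(_0\ZC_i^+)$ for each $i$ in terms of $\reg_\m(R)$, the degrees of the $a_\ell$'s, and $\indeg(I/\ia)$.

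Next I would unwind the internal degrees. For $i\leq \min\{0,s\}$ the term is $(\D_0^R)_{[0]}$-type and contributes essentially $\reg_\m(R)$; for $i>0$ the term $_0\ZC_i^+=H_\ig^r(\D_{r-1+i})_{[0]}$ is, as recalled after Lemma~\ref{Lemma4.3}, a direct sum of copies of $Z_j(\ff;R)$ for $j$ running up to $r-1$, placed in the internal degree $0$ strand of $H_\ig^r(S)$; the shift contributed by the $\binom{s}{\,\cdot\,}$-fold copies of $K_\bullet(\gamma;S)$ accounts for sums of $i$ of the degrees $\deg(\gamma_\ell)=\deg(a_\ell)$ (since $\gamma_\ell=\sum c_{j\ell}T_j$ has the degree of $a_\ell$ once we remember $a_\ell=\sum c_{j\ell}f_j$ and the $f_j$'s contribute through the symmetric-algebra grading). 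Because the internal-degree-$0$ strand is selected, the generators of $Z_j$ appearing must sit in degrees at least some multiple of $\indeg(I)$, and more precisely the relevant bound involves $(s-g+1)\indeg(I/\ia)$: this is where the term $-(s-g+1)\indeg(I/\ia)$ in the statement comes from, via the bottom strand of the $\mathcal{Z}$-complex being the one that detects $\Sym^{s-g+1}(I/\ia)$ (cf. the canonical-module description in Section~\ref{Section4}). Summing the worst-case contribution $\sigma(\ia)=\sum_\ell\deg(a_\ell)$ of all $s$ Koszul twists, subtracting $i$ and maximizing, and then subtracting off the $(s-g+1)\indeg(I/\ia)$ saving from the degrees of the cycle generators, yields
\begin{displaymath}
\reg_\m(R/J)\leq \reg_\m(R)+\sigma(\ia)-(s-g+1)\indeg(I/\ia)-s.
\end{displaymath}

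Finally I would convert from $\reg_\m$ back to $\reg$ using Hassanzadeh's comparison $\reg(R/J)\leq\reg_\m(R/J)$ together with $\reg_\m(R)\leq\reg(R)+\dim(R_0)$ from \cite[Proposition~3.4]{Ha12}, which introduces the $+\dim(R_0)$ term and gives exactly the claimed inequality. The main obstacle I anticipate is the bookkeeping in the middle step: one must be careful that the internal-grading convention on $S=R[T_1,\dots,T_r]$ (where $T_i$ has degree $1$ as a variable but corresponds to $f_i$ of possibly larger degree in $R$) is handled consistently, so that the $\indeg(I/\ia)$ contribution is extracted correctly and not double-counted against $\sigma(\ia)$; getting the precise constant $-(s-g+1)\indeg(I/\ia)-s$ rather than a weaker bound requires using that the longest strand of $_0\ZC_\bullet^+$ is governed by $\Sym^{s-g+1}(I/\ia)$, whose initial degree is at least $(s-g+1)\indeg(I/\ia)$.
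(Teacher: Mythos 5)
Your proposal follows exactly the route the paper takes: identify that under $\SD_1$, Theorem~\ref{Theorem4.5} gives $K=J$, so $_0\ZC_\bullet^+$ is a resolution of $R/J$ of length $s$, and then run the regularity estimate of \cite[Theorem~3.6]{Ha12} on that resolution. The paper's own proof is precisely this one-line observation plus a citation, so you have the right strategy and have correctly located the new input (the Cohen-Macaulayness of $R/J$ under $\SD_1$, which replaces the ``arithmetic'' hypothesis used in \cite{Ha12}).

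That said, your sketch of the degree bookkeeping in the middle step has a misattribution worth flagging. You locate the $-(s-g+1)\indeg(I/\ia)$ term as coming from ``the bottom strand of the $\ZC$-complex being the one that detects $\Sym^{s-g+1}(I/\ia)$'' and cite the canonical-module description; but that identification is only available under the stronger hypotheses of Corollary~\ref{Corollary5.4} ($\SD_2$ and $\Tor_1^R(R/I,\omega)=0$), whereas Proposition~\ref{Proposition5.2} only assumes $\SD_1$. In the actual argument of \cite[Theorem~3.6]{Ha12} this term comes from directly tracking the internal gradings of the components $_0\ZC_i^+$, which are twisted direct sums of Koszul cycles $Z_j$, and using the bound on the initial degree of $I/\ia$ to control the shifts appearing in $K_\bullet(\gamma;S)$; no appeal to the structure of $\omega_{R/J}$ or to $\Sym^{s-g+1}(I/\ia)$ is made (nor can it be, under these hypotheses). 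So to make your middle step rigorous you should reproduce Hassanzadeh's degree chase rather than invoke the canonical-module picture. The framing step $\reg_\m(R/J)\leq\max_i\{\reg_\m(_0\ZC_i^+)-i\}$ and the final conversion via $\reg\leq\reg_\m\leq\reg+\dim(R_0)$ are correct and match \cite[Proposition~3.4]{Ha12}.
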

\begin{proof} 
The proof of this result goes along the same lines as in \cite[Theorem~3.6]{Ha12}. Indeed, Theorem~\ref{Theorem4.5} implies that $R/J$ is Cohen-Macaulay and is resolved by $_0\ZC_\bullet^+.$
\end{proof}

The next proposition improves the result of Hassanzadeh and the second named author in \cite[Proposition~3.3]{HN16}.

\begin{pro} \label{Proposition5.3} 
Let $(R,\m)$ be a positively graded Cohen-Macaulay $^\ast\!$local ring over a Noetherian local ring $(R_0,\m_0),$ with canonical module $\omega.$ Let $\ia\subset I$ be two homogeneous ideals, with $\hei(I)=g,$ and let $J=(\ia\colon_R I)$ be an $s$-residual intersection of $I.$  Suppose that $I$ satisfies $\SD_2$ and $\Tor_1^R(R/I,\omega)=0.$ Then
$$\omega_{R/J}=\Sym_R^{s-g+1}(I/\ia) \otimes_R \omega (\sigma(\ia)).$$
\end{pro}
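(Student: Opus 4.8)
The plan is to upgrade the non-graded Theorem~\ref{Theorem4.8} to a \emph{graded} statement by tracking degrees through the whole argument. In Theorem~\ref{Theorem4.8} we have the $R$-module isomorphism $\omega_{R/J}\simeq\Sym_R^{s-g+1}(I/\ia)\otimes_R\omega$; here everything is naturally graded (the $\ZC$-complexes are built out of graded pieces of $\D_\bullet^M$, the symmetric powers are graded quotients of polynomial rings, and $\omega$ is the graded canonical module of $R$), so the only real content is to compute the internal degree shift. First I would record that, exactly as in the proof of Theorem~\ref{Theorem4.8}, it suffices to treat $g=2$: after adding indeterminates (Remark~\ref{Remark3.8}) and factoring out a homogeneous regular sequence $\aa$ of length $g-2$ contained in a minimal generating set of $\ia$, the canonical module of $R/\aa$ is $\omega/\aa\omega$ with the appropriate shift, and quotienting by a homogeneous regular element of degree $a$ shifts $\omega$ by $(a)$; summing these shifts over the generators of $\aa$ contributes $\sigma(\aa)$, and $\sigma(\ia)=\sigma(\aa)+\sigma(\ia/\aa)$, so the bookkeeping reduces the general shift to the $g=2$ shift plus $\sigma(\aa)$.

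For $g=2$, the heart of the matter is the graded refinement of Proposition~\ref{Proposition4.4}: the chain of isomorphisms $\omega_{R/K}^\vee\simeq H_\m^{d-s}(R/K)\simeq{}^{2}\textbf{E}_{\ver}^{-s,-d}\simeq H_0({}_{s-1}^{\ \ \omega}\ZC_\bullet^+)^\vee$ must be taken in the category of graded modules, so I would go through Lemma~\ref{Lemma4.3} once more keeping the internal grading. The key computation is the graded local duality isomorphism $H_\ig^r(S)_\mu\simeq\Hom_R(S_{-\mu-r},R)$ from Lemma~\ref{Lemma4.3}(i), together with $\ennd(H_\ig^r(S))=-r$; combined with the degree shifts $(-i)$ appearing in $\D_i^M$ and the standard fact that $\omega_{R}$ is the correctly shifted graded canonical module, this produces a global internal shift by $\sigma(\ia)$ once one sums the degrees of the linear forms $\gamma_1,\dots,\gamma_s$ (each $\gamma_i$ carrying the degree of $a_i$, and the Koszul complex $K_\bullet(\gamma;S)$ contributing the twist $\bigoplus(-i)$-type terms whose total degree is $\sigma(\ia)$). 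So after passing to the graded version of Proposition~\ref{Proposition3.3}, which gives $H_0({}_{s-1}^{\ \ \omega}\ZC_\bullet^+)\simeq\Sym_R^{s-1}(I/\ia)\otimes_R\omega$ with a degree shift read off from the graded structure of $\D_\bullet^\omega$, one obtains $\omega_{R/J}\simeq\Sym_R^{s-1}(I/\ia)\otimes_R\omega(\sigma(\ia))$ in the $g=2$ case.

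Concretely I would organize the proof as: (1) cite Theorem~\ref{Theorem4.8} for the underlying module isomorphism and note all maps there are homogeneous; (2) in the $g=2$ case, re-examine the spectral sequence argument of Proposition~\ref{Proposition4.4} with internal degrees, using the graded local duality of Lemma~\ref{Lemma4.3}(i) and the commutative diagram of Lemma~\ref{Lemma4.3}(iv) — all of whose vertical isomorphisms are graded — to identify ${}^{2}\textbf{E}_{\ver}^{-s,-d}$ with the Matlis dual of $H_0({}_{s-1}^{\ \ \omega}\ZC_\bullet^+)$ \emph{as graded modules}; (3) compute the twist: the terms of $_0\ZC_\bullet^+$ and of ${}_{s-1}^{\ \ \omega}\ZC_\bullet^+$ differ by the shift $\sigma(\ia)$ because the $\gamma_i$ have degrees $\deg a_i$ and $\sigma(\ia)=\sum_i\deg a_i$, and this shift survives into $\omega_{R/J}$; (4) reduce the general $g$ to $g=2$ by the regular-sequence argument, checking that the canonical module of $R/\aa$ is $\omega/\aa\omega$ with shift $\sigma(\aa)$, and $\sigma(\ia)=\sigma(\aa)+\sigma(\ia/\aa)$ closes the induction.

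The main obstacle I anticipate is purely the degree bookkeeping in step (3): making sure the twist on $H_\ig^r(S)$ (which is $-r$ at the top, with the shifts $(-i)$ in $\D_i^\omega$ interacting with the Koszul twists coming from $\gamma$), the twist $(-r)$ inherited from $\Hom_R(S(-r),R)$ in graded local duality, and the intrinsic shift of $\omega_R$ all combine to give exactly $\sigma(\ia)$ and no spurious $(-r)$ or $(-s)$ term. One sanity check I would use is to specialize to the link case $g=s$ with $\ia$ a regular sequence of forms of degrees $d_1,\dots,d_g$: then $R/J$ should have canonical module $\omega_R(\,\sum d_i - ?\,)$ matching the classical linkage formula $\omega_{R/J}\simeq\Hom(R/J,\omega_{R/\ia})$, which pins down the shift and confirms that $\Sym_R^{1}(I/\ia)\otimes\omega(\sigma(\ia))$ is the correct normalization. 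Beyond this the argument is a mechanical graded refinement of the proofs already given for Theorem~\ref{Theorem4.5} and Theorem~\ref{Theorem4.8}.
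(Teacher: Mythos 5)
Your proposal is correct and matches the paper's (very terse) proof, which simply says the argument "proceeds along the same lines as in the local case," i.e.\ Theorem~\ref{Theorem4.8}; you spell out exactly the degree bookkeeping that this entails, tracking the $\sigma(\ia)$ shift through the graded local duality of Lemma~\ref{Lemma4.3}(i), the spectral sequence of Proposition~\ref{Proposition4.4}, and the reduction modulo the homogeneous regular sequence $\aa$. Your link-case sanity check ($s=g$, recovering $\omega_{R/J}\simeq I/\ia\otimes_R\omega(\sigma(\ia))$ from classical liaison) correctly pins down the normalization.
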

\begin{proof}
The proof proceeds along the same lines as in the local case.  
\end{proof}
The following result is already an improvement of \cite[Proposition~3.15]{Ha12} and also of \cite[Proposition~3.3]{HN16}. We show the equality of the proposed upper bound for Castelnuovo-Mumford regularity of residual intersections in  Proposition~\ref{Proposition5.2}. This equality is showed  by Hassanzadeh  for perfect ideals of height 2 \cite[Theorem~3.16(iii)]{Ha12}.

\begin{cor}\label{Corollary5.4}
Under the assumptions of Proposition~\ref{Proposition5.3}, 
$$\reg_\m(R/J)=\reg_\m(R)+\sigma(\ia)-(s-g+1)\indeg(I/\ia)-s.$$
In particular, if $\dim(R_0)=0$ then 
$$\reg(R/J)=\reg(R)+\sigma(\ia)-(s-g+1)\indeg(I/\ia)-s.$$
\end{cor}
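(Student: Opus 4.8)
The plan is to derive the regularity formula directly from the structural description of the canonical module obtained in Proposition~\ref{Proposition5.3}, via the standard identity relating Castelnuovo-Mumford regularity to the top local cohomology of a Cohen-Macaulay module. Since $I$ satisfies $\SD_2$ it in particular satisfies $\SD_1$, so by Theorem~\ref{Theorem4.5} the quotient $R/J$ is Cohen-Macaulay of dimension $d-s$; hence its only nonvanishing local cohomology with respect to $\m$ is $H_\m^{d-s}(R/J)$, and by definition $\reg_\m(R/J)=\ennd(H_\m^{d-s}(R/J))+(d-s)$.

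First I would use graded local duality: $H_\m^{d-s}(R/J)^\vee \simeq \omega_{R/J}$, and Matlis duality reverses grading, so $\ennd(H_\m^{d-s}(R/J)) = -\indeg(\omega_{R/J})$. Therefore $\reg_\m(R/J) = (d-s) - \indeg(\omega_{R/J})$. Next, Proposition~\ref{Proposition5.3} gives $\omega_{R/J} = \Sym_R^{s-g+1}(I/\ia)\otimes_R \omega(\sigma(\ia))$, so I need to compute the initial degree of the right-hand side. Because $I/\ia$ is generated by the images of $f_1,\dots,f_r$, the module $\Sym_R^{s-g+1}(I/\ia)$ is generated in degrees that are sums of $s-g+1$ of the $\deg f_i$, the smallest such being $(s-g+1)\indeg(I/\ia)$; tensoring with $\omega$ adds $\indeg(\omega)$ and the shift by $\sigma(\ia)$ subtracts $\sigma(\ia)$. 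One has to be a little careful that the initial degree of a tensor product $N\otimes_R \omega$ equals $\indeg(N)+\indeg(\omega)$ — this holds here because, after reducing mod $\m_0$, the relevant graded pieces survive (the top-degree-zero piece of the generators is a nonzero vector space, and Nakayama keeps it nonzero after tensoring), so $\indeg(\omega_{R/J}) = (s-g+1)\indeg(I/\ia) + \indeg(\omega) - \sigma(\ia)$. Finally, applying the same local duality identity to $R$ itself gives $\reg_\m(R) = d - \indeg(\omega)$, i.e. $\indeg(\omega) = d - \reg_\m(R)$. Substituting,
\begin{align*}
\reg_\m(R/J) &= (d-s) - \big[(s-g+1)\indeg(I/\ia) + (d-\reg_\m(R)) - \sigma(\ia)\big]\\
&= \reg_\m(R) + \sigma(\ia) - (s-g+1)\indeg(I/\ia) - s,
\end{align*}
which is the claimed equality. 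The ``in particular'' clause then follows at once from the inequalities $\reg(M)\leq\reg_\m(M)\leq\reg(M)+\dim(R_0)$ recalled before Proposition~\ref{Proposition5.2}: when $\dim(R_0)=0$ these force $\reg_\m(M)=\reg(M)$ for every finitely generated graded $M$, in particular for $R$ and for $R/J$.

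The main obstacle I anticipate is the bookkeeping of initial degrees under the tensor product and the shift — concretely, justifying $\indeg\!\big(\Sym_R^{s-g+1}(I/\ia)\otimes_R\omega\big) = (s-g+1)\indeg(I/\ia)+\indeg(\omega)$ rather than merely $\geq$. This requires knowing that the degree-$(s-g+1)\indeg(I/\ia)$ piece of $\Sym_R^{s-g+1}(I/\ia)$ contains a minimal generator, which in turn relies on $\Sym_R^{s-g+1}(I/\ia)$ being nonzero there (guaranteed since it is a faithful $R/J$-module by Corollary~\ref{Corollary4.6}, being $\SD_2$ stronger than $\SD_1$) and on a Nakayama-type argument over the $^\ast$local ring to see the generator persists after $\otimes_R\omega$. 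Everything else — the two applications of graded local duality and the translation between $\reg_\m$ and $\reg$ — is routine given the results already established.
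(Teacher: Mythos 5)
Your proposal follows exactly the same route as the paper: Cohen--Macaulayness via Theorem~\ref{Theorem4.5}, the identity $\reg_\m(R/J)=(d-s)-\indeg(\omega_{R/J})$ from graded local duality, substitution of Proposition~\ref{Proposition5.3}, the identity $\reg_\m(R)=d-\indeg(\omega)$, and the regularity comparison for the final clause. The one place where your justification is not quite right is the key equality $\indeg(\Sym_R^{s-g+1}(I/\ia))=(s-g+1)\indeg(I/\ia)$. You attribute the nonvanishing of the degree-$(s-g+1)\indeg(I/\ia)$ piece to the faithfulness of $\Sym_R^{s-g+1}(I/\ia)$ as an $R/J$-module (Corollary~\ref{Corollary4.6}); but faithfulness only says the module is nonzero, not that it is nonzero in that particular degree. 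A priori, in a graded algebra the lowest-degree products of generators could all vanish. The paper's actual argument is the structural fact that $\Sym_R(I/\ia)\otimes_R R/\m\simeq(R/\m)[Y_1,\ldots,Y_\ell]$ is a polynomial ring: hence if $\deg Y_1=\indeg(I/\ia)$ then $Y_1^{s-g+1}\neq 0$, so the corresponding generator survives mod $\m$, and Nakayama gives the required equality (the inequality $\geq$ being trivial). Apart from this, everything in your write-up, including the splitting $\indeg(N\otimes_R\omega)=\indeg(N)+\indeg(\omega)$ via Nakayama and the reduction of $\reg$ to $\reg_\m$ when $\dim R_0=0$, matches the paper.
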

\begin{proof}
By Theorem~\ref{Theorem4.5}, $R/J$ is Cohen-Macaulay of dimension $d-s.$ By using the local duality theorem and Proposition~\ref{Proposition5.3}, 
\begin{align*}
\reg_\m(R/J)&=\ennd(H_\m^{d-s}(R/J))+d-s=-\indeg(\omega_{R/J})+d-s\\
&=\sigma(\ia)-\indeg(\Sym_R^{s-g+1}(I/\ia)\otimes_R \omega)+d-s\\
&=\sigma(\ia)-\indeg(\Sym_R^{s-g+1}(I/\ia))-\indeg( \omega)+d-s\\
&=\reg_\m(R)+\sigma(\ia)-\indeg(\Sym_R^{s-g+1}(I/\ia))-s,
\end{align*}
since  $\reg_\m(R)=\ennd(H_\m^d(R))+d=-\indeg(\omega)+d.$

\smallskip
It remains to prove that $\indeg(\Sym_R^{s-g+1}(I/\ia))= (s-g+1)\indeg(I/\ia).$ 

\smallskip
Let $g_1,\ldots, g_\ell$ be a minimal set of generators of $I/\ia.$ We have $$\Sym_R(I/\ia)\otimes_R R/\m \simeq  (R/\m) [Y_1,\ldots, Y_\ell],$$
where $Y_i$ is the class of $g_i$ in $\Sym_R(I/\ia) \otimes_R R/\m.$

\smallskip
Suppose that $\deg(g_1)=\indeg(I/\ia).$ Since 
$$\Sym_R(I/\ia) \otimes_R R/\m \simeq (R/\m)[Y_1,\ldots, Y_\ell]$$
is a polynomial ring, we see that $Y_1^{ s-g+1}\neq  0,$ hence $g_1^{s-g+1}\neq 0$ (this product in $\Sym_R(I/\ia))$ and $g_1^{s-g+1}\in  \Sym_R^{s-g+1}(I/\ia).$ Thus 
\begin{align*}
\indeg(\Sym_R^{s-g+1}(I/\ia))\leq \deg(g_1^{s-g+1})=(s-g+1)\deg(g_1)=(s-g+1)\indeg(I/\ia).
\end{align*}

\smallskip
On the other hand, $\indeg(\Sym_R^{s-g+1}(I/\ia))\geq (s-g+1)\indeg(I/\ia).$  Thus
\begin{align*}
\indeg(\Sym_R^{s-g+1}(I/\ia))= (s-g+1)\indeg(I/\ia).
\end{align*}

\smallskip
The remaining part follows from  $\reg(M)\leq \reg_m(M)\leq \reg(M)+\dim(R_0)$ for any finitely generated graded $R$-module $M.$
\end{proof}

Finally, we close this section by giving some tight relations between the Hilbert series of a residual intersection and the $(s-g+1)$-th symmetric power of $I/a.$ 
\begin{cor} \label{Corollary5.5}
Let $(R,\m)$ be a positively graded Cohen-Macaulay $^\ast$local algebra of dimension $d$ over an Artinian local ring $R_0,$ with canonical module $\omega.$ Suppose that $\ia \subset I$ are two homogeneous ideals of $R,$ with $\hei(I)=g,$ and  $J=(\ia\colon_R I)$ is an $s$-residual intersection of $I.$ Write 
\begin{align*}
H_{R/J}(t)=\frac{P(t)}{ (1-t^{a})^{d-s}},\quad H_{\Sym_R^{s-g+1}(I/\ia)\otimes_R \omega}(t)=\frac{Q(t)}{ (1-t^{a})^{d-s}},
\end{align*}
with $a$ the least common multiple of the degrees of the generators of the algebra $R$ over $R_0$ and $P(t),Q(t)\in \Z[t,t^{-1}],$ with $P(1),Q(1)>0.$ 
If $I$ satisfies $\SD_2$ and $\Tor_1^R(R/I,\omega)=0,$ then 
$$
P(t)=t^{\sigma(\ia)+a(d-s)} Q(t^{-1}).
$$
In particular, if $R$ is generated over $R_0$ by elements of degree one, that is, $R=R_0[R_1],$ then 
$$
e(R/J)=e(\Sym_R^{s-g+1}(I/\ia)\otimes_R \omega).
$$
\end{cor}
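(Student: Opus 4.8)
The plan is to deduce Corollary~\ref{Corollary5.5} formally from the graded computation of $\omega_{R/J}$ in Proposition~\ref{Proposition5.3}, together with the classical functional equation relating the Hilbert series of a Cohen--Macaulay graded module and of its canonical module; once the canonical module is identified, everything reduces to manipulating rational functions. First I would gather the inputs. Since $\SD_2$ implies $\SD_1$, the graded form of Theorem~\ref{Theorem4.5} --- already used in the proof of Proposition~\ref{Proposition5.2} --- gives that $R/J$ is Cohen--Macaulay of dimension $d-s$, which is why $H_{R/J}(t)$ can be written as stated with $P(1)>0$. Put $N:=\Sym_R^{s-g+1}(I/\ia)\otimes_R\omega$. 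Proposition~\ref{Proposition5.3}, whose hypotheses are precisely $\SD_2$ and $\Tor_1^R(R/I,\omega)=0$, provides a graded isomorphism $\omega_{R/J}\simeq N(\sigma(\ia))$; being the canonical module of a Cohen--Macaulay graded ring, $\omega_{R/J}$ is maximal Cohen--Macaulay of dimension $d-s$, hence so is $N$, so $H_N(t)$ has the asserted shape with $Q(1)>0$.

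Next I would invoke the functional equation: for a finitely generated Cohen--Macaulay graded module $M$ of dimension $n$ over a positively graded Noetherian $^\ast$local ring with Artinian base, graded local duality ($H_\m^n(M)^\vee\simeq\omega_M$, together with the vanishing of the lower local cohomology) gives
\[
H_{\omega_M}(t)=(-1)^{n}H_M(t^{-1})
\]
as an identity of rational functions; see~\cite[Chapter~4]{BH98}. Applied to $M=R/J$, using $H_{\omega_{R/J}}(t)=H_{N(\sigma(\ia))}(t)=t^{-\sigma(\ia)}H_N(t)$ on the left and $(1-t^{-a})^{d-s}=(-1)^{d-s}t^{-a(d-s)}(1-t^a)^{d-s}$ on the right, this becomes
\[
\frac{t^{-\sigma(\ia)}Q(t)}{(1-t^a)^{d-s}}=(-1)^{d-s}H_{R/J}(t^{-1})=\frac{t^{a(d-s)}P(t^{-1})}{(1-t^a)^{d-s}}.
\]
Clearing the common nonzero denominator yields $t^{-\sigma(\ia)}Q(t)=t^{a(d-s)}P(t^{-1})$ in $\Z[t,t^{-1}]$, and the substitution $t\mapsto t^{-1}$ turns this into $P(t)=t^{\sigma(\ia)+a(d-s)}Q(t^{-1})$, as claimed.

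For the last assertion, assume $R=R_0[R_1]$, so that $a=1$ and $\dim(R/J)=\dim(N)=d-s$. Then, for a Cohen--Macaulay graded module of dimension $d-s$ over a standard graded ring with Artinian base, the multiplicity equals the value at $t=1$ of the numerator of the Hilbert series written over $(1-t)^{d-s}$ (working with lengths over $R_0$ in place of dimensions over a field), so $e(R/J)=P(1)$ and $e(N)=Q(1)$; the hypotheses $P(1),Q(1)>0$ are exactly what guarantees that these numerators genuinely compute the multiplicities. Setting $t=1$ in $P(t)=t^{\sigma(\ia)+(d-s)}Q(t^{-1})$ gives $P(1)=Q(1)$, whence $e(R/J)=e(N)$.

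The only point that requires genuine care is the availability of the functional equation in this $^\ast$local, possibly non-standardly graded setting; one secures it by applying it to the Cohen--Macaulay graded ring $R/J$ itself, where it is simply the symmetry between $H_{R/J}$ and $H_{\omega_{R/J}}$ coming from graded local duality. Everything else is bookkeeping with the degree shift $\sigma(\ia)$ and the substitution $t\mapsto t^{-1}$.
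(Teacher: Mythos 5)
Your proof is correct and follows essentially the same route as the paper: both reduce the claim to the graded isomorphism $\omega_{R/J}\simeq \Sym_R^{s-g+1}(I/\ia)\otimes_R\omega(\sigma(\ia))$ from Proposition~\ref{Proposition5.3} and then apply the functional equation for Hilbert series of Cohen--Macaulay graded modules (which the paper cites as \cite[Corollary~4.4.6]{BH98}, i.e.\ exactly the graded local duality symmetry you invoke), followed by the same rational-function bookkeeping and the evaluation at $t=1$ for the multiplicity.
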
  
\begin{proof}
By Proposition~\ref{Proposition5.3},
$$\omega_{R/J}\simeq \Sym_R^{s-g+1}(I/\ia)\otimes_R \omega (\sigma(\ia)).$$ 
It follows from \cite[Corollary~4.4.6]{BH98} that 
\begin{displaymath}
H_{\Sym_R^{s-g+1}(I/\ia)\otimes_R \omega(\sigma(\ia))}(t) =(-1)^{d-s}H_{R/J}(t^{-1})
\end{displaymath}
is equivalent to
\begin{equation*}
H_{\Sym_R^{s-g+1}(I/\ia)\otimes_R \omega}(t)=(-1)^{d-s}t^{\sigma(\ia)}H_{R/J}(t^{-1}).
\end{equation*}
Thus
\begin{displaymath}
Q(t)=t^{\sigma(\ia)+a(d-s)}P(t^{-1}) 
\end{displaymath}
gives 
$$P(t)=t^{\sigma(\ia)+a(d-s)}Q(t^{-1}).$$
In particular, 
$$
e(R/J)=P(1)=Q(1)=e(\Sym_R^{s-g+1}(I/\ia)\otimes_R \omega),
$$
by \cite[Proposition~4.1.9]{BH98}.
\end{proof}

%######################Duality od residual intersection ################

\section{Duality for residual intersections of strongly Cohen-Macaulay ideals} \label{Section6}
The duality for residual intersetcions is a center of interest in during the development of the theory of residual. The first results of duality were proven by Peskine and Szpiro for the  theory of liaison in \cite{PS74}. Afterwards, around the works of Huneke and Ulrich in \cite{HU88}, generalizing the corresponding statement in the theory of linkage of Peskine and Szpiro. In particular, the recent works of Eisenbud and Ulrich in \cite{EU16} give some results on the duality for residual intersections. 

\smallskip
In this section, we provide the duality for residual intersections in the case  where $I$ is a strongly Cohen-Macaulay ideal. In this case, the structure of the canonical module of some symmetric powers of $I/\ia$ is given.  Therefore, we may establish some tight relations between the Hilbert series of the symmetric powers of $I/\ia$ and we give the closed formulas for the type and for the Bass number of  $\Sym_R^k(I/\ia).$

\smallskip
First we prove on the duality of residual approximation complexes in the height two case.
\begin{pro} \label{Proposition6.1}
Let $(R,\m)$ be a Cohen-Macaulay  local ring of dimension $d,$ with canonical module $\omega,$ and let $\ia\subset I$ be two ideals of $R.$ Suppose that $I$ is a  strongly Cohen-Macaulay ideal of height 2 and $J=(\ia\colon_R I)$ is an $s$-residual intersection of $I.$  Then, for all $0\leq k\leq s-2,$ 
\begin{align*}
\omega_{H_0(_k\ZC_\bullet^+)}\simeq H_0(_{s-k-1}^{\qquad\omega}\!\ZC_\bullet^+).
\end{align*}
\end{pro}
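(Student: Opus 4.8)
The plan is to exploit the acyclicity of the residual approximation complexes guaranteed by Theorem~\ref{Theorem3.9}, together with the duality for Koszul cycles of Proposition~\ref{Proposition2.4}, to produce a self-dual (up to twisting the coefficient module) spectral sequence argument. Since $I$ is strongly Cohen-Macaulay of height $2$, both $_k\ZC_\bullet^+$ and $_{s-k-1}^{\qquad\omega}\!\ZC_\bullet^+$ are acyclic for $0\leq k\leq s-2$ (the first by hypothesis~(iii) of Theorem~\ref{Theorem3.9}, the second because the components of $_\ell^{\omega}\!\ZC_\bullet^+$ are direct sums of the Koszul cycles $Z_j^\omega$, whose depths are controlled once $I$ is strongly Cohen-Macaulay and $\Tor_1^R(R/I,\omega)$ vanishes — note that over a Cohen-Macaulay ring $\Tor_1^R(R/I,\omega)=0$ automatically because $\omega$ is a maximal Cohen-Macaulay module of finite injective dimension, or one invokes the same $\SD$-type estimates). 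So $H_0(_k\ZC_\bullet^+)=R/K$ is Cohen-Macaulay of codimension $s$, and likewise $H_0(_{s-k-1}^{\qquad\omega}\!\ZC_\bullet^+)$ is a maximal Cohen-Macaulay $R/K$-module.

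First I would run the double complex $C_\m^\bullet(_k\ZC_\bullet^+)$ exactly as in the proof of Proposition~\ref{Proposition4.4}: the horizontal spectral sequence degenerates to $H_\m^{d-s}(R/K)\simeq\omega_{R/K}^\vee$ in the corner, and the vertical spectral sequence collapses — thanks to the depth estimates on the $_k\ZC_i^+$ coming from $I$ being strongly Cohen-Macaulay (which gives $\SD_\ell$ for all $\ell$, hence the stronger of the two inequalities in Lemma~\ref{Lemma4.3}(iii)) — onto the single row $H_\m^d(_k\ZC_\bullet^+)$. This identifies $H_\m^{d-s}(R/K)$ with $H_0$ of the complex $\bigl(H_\m^d(_k\ZC_{s-\bullet}^+)\bigr)$. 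Then Lemma~\ref{Lemma4.3}(iv) — more precisely the commutative-diagram identification $H_\m^d(_k\ZC_i^+)\simeq(_{s-k-1}^{\qquad\omega}\!\ZC_i^+)^\vee$ compatibly with differentials, obtained from Proposition~\ref{Proposition2.4} and graded local duality — transforms this row into the Matlis dual of the complex $_{s-k-1}^{\qquad\omega}\!\ZC_\bullet^+$. Taking homology and using exactness of Matlis duality yields
\begin{displaymath}
\omega_{R/K}^\vee\simeq H_\m^{d-s}(R/K)\simeq H_0\bigl(_{s-k-1}^{\qquad\omega}\!\ZC_\bullet^+\bigr)^\vee,
\end{displaymath}
and since both modules are finitely generated and $R$ is complete-friendly (apply Lemma~\ref{Lemma2.3} after completing, or note everything is already over the appropriate local ring), double Matlis duality gives $\omega_{H_0(_k\ZC_\bullet^+)}\simeq H_0(_{s-k-1}^{\qquad\omega}\!\ZC_\bullet^+)$, using that $H_0(_k\ZC_\bullet^+)=R/K$ and $\omega_{R/K}$ is its canonical module.

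The main obstacle will be establishing that Lemma~\ref{Lemma4.3}(iv) extends from the single pair of terms at the end of the complex to a full morphism (indeed isomorphism) of complexes $H_\m^d(_k\ZC_\bullet^+)\simeq\bigl(_{s-k-1}^{\qquad\omega}\!\ZC_{\bullet}^+\bigr)^\vee$ that is compatible with all the differentials, not just the last one. The termwise identification is not hard — each $_k\ZC_i^+$ is a direct sum of twisted Koszul cycles $Z_j$, so $H_\m^d$ of it is, by Proposition~\ref{Proposition2.4}, the Matlis dual of the corresponding sum of cycles $Z_{r-1-j}^\omega$, which is precisely a component of $_{s-k-1}^{\qquad\omega}\!\ZC_\bullet^+$; the index bookkeeping mirrors the $j\mapsto r-1-j$ and $i\mapsto s-i$ symmetry already visible in Lemma~\ref{Lemma4.3}(iv). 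What needs care is checking that the transgression maps $\tau_k$ defining the differentials of $_k\ZC_\bullet^+$ go over, under these identifications, to the (duals of the) differentials of $_{s-k-1}^{\qquad\omega}\!\ZC_\bullet^+$; this is a diagram-chase through the construction of the residual approximation complexes out of $\Tot(\ZC_\bullet(\ff;-)\otimes_S K_\bullet(\gamma;S))$, using that the Koszul self-duality $K_\bullet(\gamma;S)\simeq\Hom(K_{\bullet}(\gamma;S),S)[s]$ and the cycle duality of Proposition~\ref{Proposition2.4} are both compatible with the relevant multiplication maps. Once the isomorphism of complexes is in place, the homological conclusion is immediate, so I expect the bulk of the work — and the only genuinely delicate point — to be this compatibility verification.
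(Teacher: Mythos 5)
Your overall strategy is the same as the paper's: run the double complex $C_\m^\bullet({}_k\ZC_\bullet^+)$, degenerate the horizontal spectral sequence to $H_\m^{d-s}(H_0({}_k\ZC_\bullet^+))$, collapse the vertical one onto the row $H_\m^d({}_k\ZC_\bullet^+)$ using the depth $d$ of all components (strong Cohen-Macaulayness gives $\depth Z_i = d$ for $i \leq r-1$), and then convert via Lemma~\ref{Lemma4.3}(iv) and Matlis duality. That is exactly what the paper does. However, you have misidentified the crux, and a couple of side remarks are wrong.

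The ``genuinely delicate point'' you flag --- extending Lemma~\ref{Lemma4.3}(iv) to a full isomorphism of complexes compatible with all the differentials --- is not needed. The horizontal spectral sequence degenerates so that $H_\m^{d-s}(H_0({}_k\ZC_\bullet^+))$ is identified with the $E_2$-term at the single position $(-s,-d)$, and at that position ${}^2\textbf{E}_{\ver}^{-s,-d}$ is simply $\ker\bigl(H_\m^d({}_k\ZC_s^+)\to H_\m^d({}_k\ZC_{s-1}^+)\bigr)$. On the other side, exactness of Matlis duality gives $H_0\bigl({}_{s-k-1}^{\qquad\omega}\!\ZC_\bullet^+\bigr)^\vee \simeq \ker\bigl(({}_{s-k-1}^{\qquad\omega}\!\ZC_0^+)^\vee\to({}_{s-k-1}^{\qquad\omega}\!\ZC_1^+)^\vee\bigr)$. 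So identifying these two kernels requires exactly one commutative square --- the one Lemma~\ref{Lemma4.3}(iv) actually proves --- and nothing more. No chase through the transgressions, no full duality of complexes. Once you see this, the argument is essentially formal: local duality, collapse of the two spectral sequences, the single square, and Lemma~\ref{Lemma2.3}.

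Two smaller points worth correcting. First, $H_0({}_k\ZC_\bullet^+)$ is $R/K$ only for $k=0$; for $1\leq k\leq s-2$ Theorem~\ref{Theorem3.9} tells you it is $\Sym_R^k(I/\ia)$, but all you need here is its Cohen-Macaulayness, which that theorem also gives. Second, your claim that $\Tor_1^R(R/I,\omega)=0$ is automatic over a Cohen-Macaulay ring because $\omega$ is maximal Cohen-Macaulay of finite injective dimension is false --- maximal Cohen-Macaulay does not imply flat, and the paper is careful to keep $\Tor_1^R(R/I,\omega)=0$ as an explicit hypothesis elsewhere (Theorems~\ref{Theorem4.8} and~\ref{Theorem6.2}). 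In fact Proposition~\ref{Proposition6.1} deliberately avoids this hypothesis and says nothing about acyclicity of ${}_{s-k-1}^{\qquad\omega}\!\ZC_\bullet^+$: the identification of $H_0$ as a cokernel (hence its Matlis dual as a kernel) holds for any complex, and that is all the proof requires.
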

\begin{proof} 
By Theorem~\ref{Theorem3.9},  the complex $_k\ZC_\bullet^+$ is acyclic and $H_0(_k\ZC_\bullet^+)$ is Cohen-Macaulay of dimension $d-s.$ Therefore, by local duality,
\begin{equation} \label{Equation6.1}
\omega_{H_0(_k\ZC_\bullet^+)}^\vee\simeq H_\m^{d-s}(H_0(_k\ZC_\bullet^+)).
\end{equation}
	
\smallskip
 As $I$ is strongly Cohen-Macaulay of height 2, we have that $\depth(Z_i)=d,$ for all $0\leq i\leq r-1.$ By the definition of $_k\ZC_\bullet^+,$ for all $0\leq i\leq s,\ _k\ZC_i^+$ is a direct sum of copies of modules $Z_0,Z_1,\ldots,Z_{r-1},$ therefore $\depth(_k\ZC_i^+)=d.$ We now consider the double complex $C_\m^\bullet (_k\ZC_\bullet^+)$ that gives rise to two sequences. The second terms of the horizonal spectral are
	\begin{align*}
	^2\textbf{E}_{\hor}^{-i,-j}=
	\begin{cases}
	H_\m^{d-s}(H_0(_k\ZC_\bullet^+))&\text{if} \quad j=d-s\; \text{and}\  i=0\\0 & \text{otherwise}
	\end{cases}
	\end{align*}
and the first terms of the vertical spectral are
\begin{footnotesize}
\begin{align*}
^1\textbf{E}_{\ver}^{-i,-j}=
\begin{cases}
\xymatrix{0\ar[r]& H_\m^d(_k\ZC_s^+)\ar[r]&\cdots \ar[r] & H_\m^d(_k\ZC_1^+)\ar[r]&H_\m^d(_k\ZC_0^+)\ar[r]& 0}\; \text{if} \; j=d\\
0 \qquad\quad \text{otherwise}.
\end{cases}
\end{align*}
\end{footnotesize}

\smallskip
By the convergence of the spectral sequences, we obtain
\begin{equation} \label{Equation6.2}
H_\m^{d-s}(H_0(_k\ZC_\bullet^+))\simeq \, ^\infty\textbf{E}_{\ver}^{-s,-d}= \, ^2\textbf{E}_{\ver}^{-s,-d}.
\end{equation}
By  Lemma~\ref{Lemma4.3}(iv),  we have a commutative diagram, for all $0\leq k\leq s-2$
\begin{displaymath}
\xymatrix{  H_\m^d(_k\ZC_s^+) \ar[r]\ar[d]_{\simeq} & H_\m^d(_k\ZC_{s-1}^+) \ar[d]^{\simeq } \\ 
(_{s-k-1}^{\qquad \omega}\ZC_0^+)^\vee\ar[r] &  (_{s-k-1}^{\qquad \omega}\ZC_1^+)^\vee.}
	\end{displaymath} 
	Therefore
	\begin{equation} \label{Equation6.3}
	^2\textbf{E}_{\ver}^{-s,-d} \simeq H_0( _{s-k-1}^{\qquad\omega}\!\ZC_\bullet^+)^\vee.
	\end{equation}
	
	\smallskip
	By (\ref{Equation6.1}), (\ref{Equation6.2}), (\ref{Equation6.3}) and Lemma~\ref{Lemma2.3}, 
	$$\omega_{H_0(_k\ZC_\bullet^+)}\simeq H_0( _{s-k-1}^{\qquad\omega}\!\ZC_\bullet^+).$$
\end{proof}
%#########################################################

We now state the main result of this section. Let recall us that if $M,N,L$ are three $R$-modules, then  a morphism $\varphi:~M \otimes_R N \longrightarrow L$ is a perfect pairing if $\psi_1: M \longrightarrow \Hom_R(N,L),$ sending $m$ to $\psi_1(m): n\mapsto \varphi(m\otimes n)$ and $\psi_2: N \longrightarrow \Hom_R(M,L),$ sending $n$ to $\psi_2(n): m\mapsto \varphi(m\otimes n)$ are two isomorphisms. 
\begin{theo} \label{Theorem6.2}
Let $(R,\m)$ be a Cohen-Macaulay  local ring of dimension $d,$ with canonical module $\omega,$ and let  $\ia\subset I$ be two ideals of $R,$ with $\hei(I)=g.$ Suppose that $J=(\ia\colon_R I)$ is an $s$-residual intersection of $I.$ If $I$ is strongly Cohen-Macaulay and  $\Tor_1^R(R/I,\omega)=0,$ then, for all $1\leq k\leq s-g,$
\begin{enumerate}
\item[\em (i)]  the canonical module of $\Sym_R^k(I/\ia)$ is 
$\Sym_R^{s-g+1-k}(I/\ia)\otimes_R \omega;$
\item[\em (ii)] there is a perfect pairing
\begin{displaymath}
(\Sym_R^k(I/\ia)\otimes_R \omega)\otimes_R \Sym_R^{s-g+1-k}(I/\ia)\longrightarrow \Sym_R^{s-g+1}(I/\ia)\otimes_R \omega.
\end{displaymath}
\end{enumerate}
\end{theo}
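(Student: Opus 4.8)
The plan is to deduce part~(i) from the height two duality of Proposition~\ref{Proposition6.1}, and then to obtain the perfect pairing of part~(ii) from~(i) together with standard duality for maximal Cohen--Macaulay modules over $R/J$. For part~(i) I would first treat $g=2$: since $\Tor_1^R(R/I,R)=0$ always and $\Tor_1^R(R/I,\omega)=0$ by hypothesis, Proposition~\ref{Proposition3.3} gives $H_0({}_k\ZC_\bullet^+)\simeq\Sym_R^k(I/\ia)$ and $H_0({}_{s-1-k}^{\,\omega}\ZC_\bullet^+)\simeq\Sym_R^{s-1-k}(I/\ia)\otimes_R\omega$, and as $s-1-k=s-g+1-k$ when $g=2$, Proposition~\ref{Proposition6.1} yields $\omega_{\Sym_R^k(I/\ia)}\simeq\Sym_R^{s-g+1-k}(I/\ia)\otimes_R\omega$ for $1\le k\le s-2$.

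For arbitrary $g\ge 2$ I would cut by a regular sequence $\aa\subset\ia$ of length $g-2$ belonging to a minimal generating set of $\ia$, as in the proof of Theorem~\ref{Theorem4.8}: $R/\aa$ is Cohen--Macaulay with canonical module $\omega/\aa\omega$, $I/\aa$ is strongly Cohen--Macaulay of height two (Proposition~\ref{Proposition4.1}), $J/\aa$ is an $(s-g+2)$-residual intersection, $\Tor_1^{R/\aa}(R/I,\omega/\aa\omega)=\Tor_1^R(R/I,\omega)=0$ since $\aa$ is regular on $\omega$, and --- because $\aa$ annihilates $I/\ia$ --- forming symmetric powers, tensoring with $\omega$, and taking canonical modules all commute with passage to $R/\aa$; the height two case over $R/\aa$ (with $s$ replaced by $s-g+2$) then gives~(i) in general, and $g\le 1$ reduces to $g\ge 2$ by Remark~\ref{Remark3.8}.

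For part~(ii), Theorem~\ref{Theorem4.8} identifies the target $\Sym_R^{s-g+1}(I/\ia)\otimes_R\omega$ with the canonical module $\omega_{R/J}$ of the Cohen--Macaulay ring $R/J$ (Theorem~\ref{Theorem4.5}), and the map in the statement is obtained from the surjective multiplication $\Sym_R^k(I/\ia)\otimes_{R/J}\Sym_R^{s-g+1-k}(I/\ia)\to\Sym_R^{s-g+1}(I/\ia)$ of the symmetric algebra by tensoring over $R$ with $\omega$ and regrouping (licit because $J$ kills all these symmetric powers). For $1\le k\le s-g$, both $k$ and $s-g+1-k$ lie in $[1,s-g]$, so by Corollary~\ref{Corollary4.6}(ii) the modules $\Sym_R^k(I/\ia)$ and $\Sym_R^{s-g+1-k}(I/\ia)$ are maximal Cohen--Macaulay over $R/J$, and part~(i), used with $k$ replaced by $s-g+1-k$, identifies $\Sym_R^k(I/\ia)\otimes_R\omega\simeq\omega_{\Sym_R^{s-g+1-k}(I/\ia)}=\Hom_{R/J}(\Sym_R^{s-g+1-k}(I/\ia),\omega_{R/J})$. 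Since a maximal Cohen--Macaulay $R/J$-module is reflexive with respect to $\omega_{R/J}$, it then suffices to show that the first adjoint
\[
\psi\colon\Sym_R^k(I/\ia)\otimes_R\omega\longrightarrow\Hom_{R/J}\big(\Sym_R^{s-g+1-k}(I/\ia),\omega_{R/J}\big)
\]
is an isomorphism, for the second adjoint then factors through the biduality isomorphism of $\Sym_R^{s-g+1-k}(I/\ia)$ and is automatically bijective; and since by~(i) the source and target of $\psi$ are abstractly isomorphic finitely generated modules, $\psi$ is an isomorphism once it is surjective (compose with an abstract isomorphism of the target onto the source to get a surjective, hence bijective, endomorphism of a Noetherian module).

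The main obstacle is the surjectivity of $\psi$ --- that the explicit multiplication pairing, rather than merely some abstract isomorphism, realizes the duality. I would establish this first in height two by upgrading the proof of Proposition~\ref{Proposition6.1}: the differential graded algebra structure of $K_\bullet(\ff;R)$ and the induced module structure on $K_\bullet(\ff;\omega)$ give a pairing of residual approximation complexes ${}_k\ZC_\bullet^+\otimes_{R/J}{}_{s-1-k}^{\,\omega}\ZC_\bullet^+\to{}_{s-1}^{\,\omega}\ZC_\bullet^+$ lifting, on $H_0$, the multiplication tensored with $\omega$; in top homological degree this pairing is, under the identifications $H_\m^d(Z_p)\simeq (Z^{\omega}_{r-1-p})^{\vee}$ of Proposition~\ref{Proposition2.4} and the graded local duality of Lemma~\ref{Lemma4.3}(i), exactly the Koszul self-duality pairing of Proposition~\ref{Proposition2.4}, which is perfect. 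Feeding this compatibility into the two spectral sequences of $C_\m^\bullet({}_k\ZC_\bullet^+)$ as in the proof of Proposition~\ref{Proposition6.1} should show that the pairing induced on $H_0$ is the evaluation pairing $\Hom_{R/J}(N,\omega_{R/J})\otimes_{R/J}N\to\omega_{R/J}$ for $N=\Sym_R^{s-1-k}(I/\ia)$, which is perfect for $N$ maximal Cohen--Macaulay; the general-height case then follows by the reduction modulo $\aa$ used for~(i). The delicate point, and what I expect to take real work, is checking that the chain-level pairing is compatible with all the local-duality identifications of Lemma~\ref{Lemma4.3} and Proposition~\ref{Proposition2.4}, so that the spectral-sequence argument genuinely returns the evaluation pairing.
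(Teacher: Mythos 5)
Your proof of part~(i) is correct and is essentially the paper's argument: height two is handled by Proposition~\ref{Proposition6.1} together with Proposition~\ref{Proposition3.3} (using $\Tor_1^R(R/I,R)=0$ on one side and the hypothesis $\Tor_1^R(R/I,\omega)=0$ on the other), and the general case follows by cutting with a regular sequence $\aa\subset\ia$ of length $g-2$ inside a minimal generating set and invoking Proposition~\ref{Proposition4.1}, with Remark~\ref{Remark3.8} handling $g\le 1$.

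For part~(ii) you have read more into the statement than it asserts, and the extra content is precisely where your write-up stalls. The theorem only claims \emph{existence} of a perfect pairing with the displayed source and target; it does not claim that the pairing is induced by multiplication in the symmetric algebra. Given that, the paper's proof is short: by Corollary~\ref{Corollary4.6}(ii) the module $N:=\Sym_R^{s-g+1-k}(I/\ia)$ is maximal Cohen--Macaulay over $R/J$, by Theorem~\ref{Theorem4.8} the target $\Sym_R^{s-g+1}(I/\ia)\otimes_R\omega$ is $\omega_{R/J}$, and part~(i) gives the abstract identification $\Sym_R^k(I/\ia)\otimes_R\omega\simeq\omega_N=\Hom_{R/J}(N,\omega_{R/J})$. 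The evaluation pairing $\Hom_{R/J}(N,\omega_{R/J})\otimes_{R/J} N\to\omega_{R/J}$ is then perfect because maximal Cohen--Macaulay modules over a Cohen--Macaulay local ring are reflexive with respect to the canonical module. That is all that is needed.

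The stronger assertion you are chasing --- that the \emph{multiplication} pairing is perfect --- is the subject of Theorem~\ref{Theorem6.7}, where $R$ is assumed Gorenstein, and its proof does not go through any compatibility of chain-level pairings with the local-duality identifications. It goes through Lemmas~\ref{Lemma6.5} and~\ref{Lemma6.6}: given the abstract isomorphism from Theorem~\ref{Theorem6.2}, one only needs to check that the natural multiplication map becomes injective after $\otimes_R k$, and this follows from the fact that $\Sym_R(I/\ia)\otimes_R k$ is a polynomial ring in which $H^0_{S_+}$ of the relevant graded pieces vanishes. That Nakayama-style argument is vastly simpler than the DG-pairing of residual approximation complexes $_k\ZC_\bullet^+\otimes{}_{s-1-k}^{\,\omega}\ZC_\bullet^+\to{}_{s-1}^{\,\omega}\ZC_\bullet^+$ you envision, and it sidesteps the surjectivity of the adjoint $\psi$ that your proposal leaves open (and that you yourself flag as the unresolved delicate point). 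Note also that Lemma~\ref{Lemma6.6} as stated compares $\Hom_R(\Sym^{s-t}_R(M),\Sym^s_R(M))$ with $\Sym^t_R(M)$ without an $\omega$-twist, which is why the paper formulates the multiplication result only over a Gorenstein base; attempting it over a general Cohen--Macaulay $R$ as your spectral-sequence route does is not something the paper undertakes, and you should not silently assume it is available.
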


\begin{proof} 
(i) First we treat the case $g = 2$. By Proposition~\ref{Proposition6.1}, for all $1\leq k\leq s-2,$ 
\begin{align*} 
\omega_{\Sym_R^k(I/\ia)}
&\simeq  H_0(_{s-k-1}^{\qquad\omega}\!\ZC_\bullet^+)\\
&\simeq \Sym_R^{s-k-1}(I/\ia)\otimes_R \omega.
\end{align*}
The last isomorphism follows from Proposition~\ref{Proposition3.3}.

\smallskip
Now, we may suppose that $g\geq 2$ by Remark~\ref{Remark3.8}. If $g>2,$ then we choose a regular sequence $\aa$ of length $g-2$ inside $\ia$ which is a part of a minimal generating set of $\ia$ as in the proof of Theorem~\ref{Theorem4.8}. As $I/\aa$ is strongly Cohen-Macaulay by Proposition~\ref{Proposition4.1}, it follows from the height two case that
\begin{align*}
\omega_{\Sym_R^k(I/\ia)}\simeq \omega_{\Sym_{R/\aa}^k(\frac{I/\aa}{\ia/\aa})}&\simeq \Sym_{R/\aa}^{(s-g+2)-k-1}\Big(\frac{I/\aa}{\ia/\aa}\Big)\otimes_{R/\aa} (\omega/\aa\omega)\\
&\simeq \Sym_{R}^{s-g+1-k}(I/\ia)\otimes_R\omega.
\end{align*}

(ii) It suffices to prove  that, for all $1\leq k\leq s-g,$
\begin{align*}
\Sym_R^k(I/\ia)\otimes_R \omega\simeq  \Hom_R\big(\Sym_R^{s-g+1-k}(I/\ia), \Sym_R^{s-g+1}(I/\ia)\otimes_R \omega\big).
\end{align*}

As  $\Sym_R^{s-g+1-k}(I/\ia)$ is a maximal Cohen-Macaulay $R/J$-module by Corollary~\ref{Corollary4.6}(ii) and $\Sym_R^{s-g+1}(I/\ia)\otimes_R \omega$ is the canonical module of $R/J$ by Theorem~\ref{Theorem4.8},
$$\omega_{\Sym_R^{s-g+1-k}(I/\ia)}\simeq \Hom_R\big(\Sym_R^{s-g+1-k}(I/\ia), \Sym_R^{s-g+1}(I/\ia)\otimes_R \omega\big).$$

The conclusion follows from (i).
\end{proof}
In particular, if the residual intersections are geometric, we obtain the following results that could be compared to one of \cite[Theorem~2.2]{EU16}.
\begin{cor} \label{Corollary6.3}
Let $(R,\m)$ be a Gorenstein  local ring of dimension $d$ and let $\ia\subset I$ be two ideals of $R.$  Assume that $I$ is a  strongly Cohen-Macaulay ideal of height $g$ and $ J=(\ia\colon_R I)$ is a geometric  $s$-residual intersection of $I.$ Then, for all $1\leq k\leq s-g,$
\begin{enumerate}
\item[\em (i)]  the canonical module of $I^k/\ia I^{k-1}$ is 
$I^{s-g+1-k}/\ia I^{s-g-k};$
\item[\em (ii)] there is a perfect pairing
\begin{displaymath}
I^k/\ia I^{k-1}\otimes_R I^{s-g+1-k}/\ia I^{s-g-k}\longrightarrow I^{s-g+1}/\ia I^{s-g}.
\end{displaymath}
\end{enumerate}
\end{cor}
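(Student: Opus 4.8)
The plan is to deduce this from Theorem~\ref{Theorem6.2} by checking that, for geometric residual intersections of strongly Cohen-Macaulay ideals, the natural maps $\Sym_R^k(I/\ia)\to I^k/\ia I^{k-1}$ are isomorphisms in the relevant range $0\le k\le s-g+1$, and that $\omega\simeq R$ automatically gives $\Tor_1^R(R/I,\omega)=0$. The first step is therefore to recall why these symmetric-power-to-power maps are isomorphisms. Since $J=(\ia\colon_R I)$ is a geometric $s$-residual intersection, we have $\hei(I+J)\ge s+1$; localizing at any prime $\point$ not containing $I+J$ either kills $I/\ia$ or forces $\point\supseteq J$ with $\hei(\point)\le s$, and in the latter case the arithmetic condition (which geometric implies) gives $\mu_{R_\point}((I/\ia)_\point)\le 1$, so $(I/\ia)_\point$ is cyclic and the natural surjection $\Sym_R^k(I/\ia)_\point\twoheadrightarrow (I^k/\ia I^{k-1})_\point$ is an isomorphism. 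The point where $I\subset\point$ and $J\subset\point$ has height $\ge s+1$. Hence the kernel of $\Sym_R^k(I/\ia)\to I^k/\ia I^{k-1}$ is supported only in codimension $\ge s+1$; but both modules are (maximal) Cohen-Macaulay $R/J$-modules of dimension $d-s$ by Corollary~\ref{Corollary4.6}(ii) and Theorem~\ref{Theorem3.9} (the target being so because it equals $\Sym_R^k$ away from a set of codimension $\ge 1$ in $R/J$ and is torsion-free, or by a direct depth argument), so a submodule supported in codimension $\ge 1$ in $R/J$ must vanish. This gives $\Sym_R^k(I/\ia)\simeq I^k/\ia I^{k-1}$ for $0\le k\le s-g+1$.

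With these identifications in hand, the corollary is a transcription of Theorem~\ref{Theorem6.2}: since $R$ is Gorenstein, $\omega\simeq R$, so $\Tor_1^R(R/I,\omega)\simeq\Tor_1^R(R/I,R)=0$ and the hypotheses of Theorem~\ref{Theorem6.2} hold. Part (i) of that theorem gives $\omega_{\Sym_R^k(I/\ia)}\simeq\Sym_R^{s-g+1-k}(I/\ia)\otimes_R\omega\simeq\Sym_R^{s-g+1-k}(I/\ia)$, which under the isomorphisms above reads $\omega_{I^k/\ia I^{k-1}}\simeq I^{s-g+1-k}/\ia I^{s-g-k}$, proving (i). Similarly, part (ii) of Theorem~\ref{Theorem6.2} provides a perfect pairing $\Sym_R^k(I/\ia)\otimes_R\Sym_R^{s-g+1-k}(I/\ia)\to\Sym_R^{s-g+1}(I/\ia)$ (absorbing the $\otimes\omega$'s which are trivial here), and one must check that under the identifications this is exactly the multiplication map $I^k/\ia I^{k-1}\otimes_R I^{s-g+1-k}/\ia I^{s-g-k}\to I^{s-g+1}/\ia I^{s-g}$; this is immediate from the definition of the symmetric algebra map $\Sym_R(I/\ia)\to\bigoplus_k I^k/\ia I^{k-1}$ being a ring homomorphism, so it sends the multiplication pairing to the multiplication pairing.

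The main obstacle I anticipate is the verification that the symmetric powers $\Sym_R^k(I/\ia)$ are torsion-free over $R/J$ (equivalently, that they have no associated prime of codimension $\ge 1$ in $R/J$), which is what is needed to conclude that a submodule supported in codimension $\ge s+1$ vanishes; this follows from their being maximal Cohen-Macaulay over the unmixed ring $R/J$ — indeed $R/J$ is Cohen-Macaulay by Theorem~\ref{Theorem4.5}, hence unmixed, and a maximal Cohen-Macaulay module over an unmixed local ring is torsion-free in the appropriate sense. One should also be slightly careful that $I^k/\ia I^{k-1}$ genuinely is maximal Cohen-Macaulay of dimension $d-s$; the cleanest route is to observe that it agrees with the Cohen-Macaulay module $\Sym_R^k(I/\ia)$ on the punctured spectrum of $R/J$ (by the geometric/arithmetic hypothesis) and is a quotient of it, so the surjection has a kernel of finite length over $R/J$, which then must be zero since $\Sym_R^k(I/\ia)$ has positive depth over $R/J$ (as $s-g+1-k\ge 1$ ensures $d-s\ge 1$ after the usual reduction adding indeterminates, cf.\ Remark~\ref{Remark3.8}). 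Everything else is bookkeeping with the already-established Theorem~\ref{Theorem6.2}.
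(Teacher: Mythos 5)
Your main line of argument is correct and in fact fills in a citation the paper leaves to the reader: the paper simply quotes \cite[Corollary~2.11]{HN16} for the identification $\Sym_R^k(I/\ia)\simeq I^k/\ia I^{k-1}$ and then translates Theorem~\ref{Theorem6.2}, whereas you rederive this identification. Your first argument for it is sound and can be stated more economically: the kernel $N$ of the natural surjection $\Sym_R^k(I/\ia)\twoheadrightarrow I^k/\ia I^{k-1}$ vanishes at every prime of height $\le s$ (arithmetic hypothesis) and at every prime not containing $I$ (where $I/\ia$ is cyclic), so $\Supp(N)\subseteq V(I+J)$, a proper closed subset of $\Spec(R/J)$; since $\Sym_R^k(I/\ia)$ is maximal Cohen--Macaulay over the Cohen--Macaulay ring $R/J$ by Corollary~\ref{Corollary4.6}(ii), its associated primes are minimal primes of $R/J$, all of height $s$, and a submodule $N$ with $N_\point=0$ at those primes has no associated prime and is zero. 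You do not need to establish Cohen--Macaulayness of $I^k/\ia I^{k-1}$ first; it follows once the isomorphism is proved.

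Two points should be corrected. First, your alternative ``cleanest route'' is not correct as stated: the kernel is supported on $V(I+J)$, which has codimension $\ge 1$ in $\Spec(R/J)$ but is in general strictly larger than the closed point, so $N$ need not have finite length; the associated-prime argument you give first is the one that actually works. Second, your assertion that the perfect pairing from Theorem~\ref{Theorem6.2}(ii) ``is exactly the multiplication map'' is unjustified: that pairing is manufactured from abstract canonical-module isomorphisms $\omega_{\Sym^{s-g+1-k}}\simeq\Hom(\Sym^{s-g+1-k},\omega_{R/J})$ together with Theorem~\ref{Theorem6.2}(i) and Theorem~\ref{Theorem4.8}, and none of these identifications is multiplicative a priori. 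The paper explicitly remarks immediately after Corollary~\ref{Corollary6.3} that ``the pairing in this Corollary \dots\ need not be given by multiplication''; upgrading to the multiplication pairing is exactly the content of Lemmas~\ref{Lemma6.5}--\ref{Lemma6.6} and Theorem~\ref{Theorem6.7}, and is not a formal consequence of the Rees-algebra map being a ring homomorphism. Fortunately Corollary~\ref{Corollary6.3}(ii) only asserts the existence of a perfect pairing, so this overclaim does not undermine your proof of the stated result.
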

\begin{proof}
It is an immediate translation from Theorem~\ref{Theorem6.2}, in view of the facts that $\Sym_R^k(I/\ia)\simeq I^k/\ia I^{k-1}$ by \cite[Corollary~2.11]{HN16} and $\omega_R\simeq R.$
\end{proof} 

Notice that the pairing in this Corollary, and in the main Theorem above need not be given by multiplication. However, Eisenbud and Ulrich proved that, in many situations where our results apply, the multiplication indeed produces a perfect pairing. In this regards, an example they provide is interesting.

\begin{exem}\cite[Example~2.8]{EU16}
Let $R=k[[x,y,z]],$ where $k$ is an infinite field and $I=(x,y)^2.$ If $\ia$ is generated by 3 sufficiently general elements of degree 3 in $I,$ then $J=\ia\colon_R I$ is a 3-residual intersection. Using  {\tt Macaulay2} \cite{Macaulay2}, they verified that $I$ is strongly Cohen-Macaulay, hence $\omega_{R/J}\simeq \Sym_R^2(I/\ia).$ Moreover  $\omega_{I/\ia}\simeq I/\ia.$ 

\smallskip
Computation shows that there is a unique (up to scalars) nonzero homogeneous map $I/\ia\otimes_R I/\ia \longrightarrow \omega_{R/J}$ of lowest degree, and this is a perfect pairing. But they notice that there can be no perfect pairing $I/\ia\otimes_R I/\ia \longrightarrow I^2/\ia I$ because the target is annihilated by $(x,y,z)^2$ while $I/\ia$ is not. This implies that $\omega_{R/J}\neq I^2/\ia I$ and $J$ is not geometric.

However, the multiplication with value in the symmetric square $I/\ia\otimes_R I/\ia \longrightarrow \Sym_R^2(I/\ia)$ is a perfect pairing.
\end{exem}

Next, we will show that the perfect paring in Theorem~\ref{Theorem6.2} and also in Corollary~\ref{Corollary6.3} could be chosen by multiplication. Fisrt, we need the following lemmas. 

\begin{lem} \label{Lemma6.5}
Let $(R,\m ,k)$ be a local Noetherian ring and $S$ be a Noetherian standard graded $R$-algebra.  For any $s\geq t,$ we consider 
$$
\psi : S_t \longrightarrow \Hom_R(S_{s-t},S_s)
$$ 
the natural map given by the algebra structure of $S.$ If $H_{S_{+}}^0(S\otimes_R k)_t=0,$ then  $\psi \otimes k$ is into.
\end{lem}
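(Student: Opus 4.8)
The statement asserts that the natural multiplication map $\psi\colon S_t\to \Hom_R(S_{s-t},S_s)$ becomes injective after tensoring with the residue field $k$, under the hypothesis $H^0_{S_+}(S\otimes_R k)_t=0$. The first reduction I would make is to observe that tensoring $\psi$ with $k$ is harmless on the source, in the sense that $S_t\otimes_R k=(S\otimes_R k)_t=\overline{S}_t$, where $\overline{S}:=S\otimes_R k$ is a standard graded $k$-algebra; and that the composite $S_t\otimes_R k\to \Hom_R(S_{s-t},S_s)\otimes_R k\to \Hom_k(S_{s-t}\otimes_R k,S_s\otimes_R k)=\Hom_k(\overline{S}_{s-t},\overline{S}_s)$ is exactly the multiplication map $\overline{\psi}\colon \overline{S}_t\to\Hom_k(\overline{S}_{s-t},\overline{S}_s)$ of the $k$-algebra $\overline{S}$. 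Since a composite $\psi\otimes k$ followed by the further map to $\Hom_k(\ldots)$ is injective iff... well, one only needs $\overline{\psi}$ injective to conclude $\psi\otimes k$ is injective. So the whole problem reduces to: \emph{for a standard graded algebra $\overline{S}$ over a field $k$, if $H^0_{\overline{S}_+}(\overline{S})_t=0$ then multiplication $\overline{S}_t\to\Hom_k(\overline{S}_{s-t},\overline{S}_s)$ is injective.}

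\textbf{The core argument.} Now I work entirely over the field $k$, writing $A=\overline{S}$, $\mathfrak{n}=A_+$. Let $f\in A_t$ be a homogeneous element in the kernel of $\overline{\psi}$, i.e. $f\cdot A_{s-t}=0$ in $A_s$. I claim $f\in H^0_{\mathfrak{n}}(A)_t$, which by hypothesis is $0$. Indeed, since $A$ is standard graded, $A_{s-t}=A_1^{\,s-t}$ is spanned by products of $s-t$ linear forms, and $A_1$ generates $\mathfrak{n}$; thus $f\cdot A_{s-t}=0$ says precisely that $\mathfrak{n}^{\,s-t}f=0$ in the graded piece of degree $s$. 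But $f$ is homogeneous of degree $t$, so $\mathfrak{n}^{\,s-t}f$ is concentrated in degree $s$; hence $\mathfrak{n}^{\,s-t}f=0$ as an element of $A$, which means $f\in (0:_A\mathfrak{n}^{\,s-t})\subseteq H^0_{\mathfrak{n}}(A)$. Therefore $f\in H^0_{\mathfrak{n}}(A)_t=0$. This proves $\overline{\psi}$ is injective on homogeneous elements of degree $t$, hence injective, and the reduction in the first paragraph finishes the argument.

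\textbf{Where care is needed.} The one spot to be careful is the passage from $S$ to $\overline{S}$: one must check that $S_t\otimes_R k\to \Hom_k(\overline{S}_{s-t},\overline{S}_s)$ is genuinely the multiplication of $\overline{S}$, which is a straightforward but slightly fiddly naturality check, and one must check that $S$ standard graded over $R$ forces $\overline{S}$ standard graded over $k$ with $\overline{S}_{s-t}=\overline{S}_1^{\,s-t}$ — again immediate since generation in degree $1$ is preserved under base change $-\otimes_R k$. The hypothesis $s\geq t$ is used only to ensure $s-t\geq 0$ so that $\mathfrak{n}^{\,s-t}$ makes sense (with $\mathfrak{n}^0=A$, in which case the statement is the trivial one that $A_t\to \Hom_k(A_t\otimes\text{stuff})$... actually for $s=t$ the map is $A_t\to \Hom_k(A_0,A_t)=A_t$, the identity, trivially injective). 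I do not expect any real obstacle here; the identity $H^0_{\mathfrak{n}}(A)=\bigcup_n (0:_A\mathfrak{n}^n)$ together with homogeneity is the only ingredient, and it is standard.
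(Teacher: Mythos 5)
Your argument is essentially the same as the paper's: you reduce to the fiber $\overline{S}=S\otimes_R k$ and use $H^0_{\overline{S}_+}(\overline{S})_t=0$ to show the multiplication map is injective there; the paper phrases the reduction via the inclusion $\m\Hom_R(S_{s-t},S_s)\subseteq\Hom_R(S_{s-t},\m S_s)$ rather than via the base-change map $\Hom_R(S_{s-t},S_s)\otimes_R k\to\Hom_k(\overline{S}_{s-t},\overline{S}_s)$, but these are the same observation. One small slip to flag: you write that ``$\mathfrak{n}^{\,s-t}f$ is concentrated in degree $s$,'' which is false — $\mathfrak{n}^{s-t}=\bigoplus_{n\geq s-t}A_n$ so $\mathfrak{n}^{s-t}f$ lives in degrees $\geq s$. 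The conclusion $\mathfrak{n}^{s-t}f=0$ nonetheless holds, because $A$ standard graded gives $A_n=A_1^{n-(s-t)}A_{s-t}$ for $n\geq s-t$, whence $fA_n=A_1^{n-(s-t)}(fA_{s-t})=0$ for all such $n$; equivalently, $\mathfrak{n}^{s-t}$ is the ideal generated by $A_{s-t}$ and $f$ annihilates the generators. With that fix, the proof is correct.
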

\begin{proof}
Let $L\in S_t$ be such that $0\neq \overline{L}\in S_t\otimes_R k=(S\otimes_R k)_t$. The element $L$ is sent to the class of the  homomorphism $\times L$. We have to prove that this class is not zero. 
As
$$
\m \Hom_R(S_{s-t},S_s)\subseteq \Hom_R(S_{s-t},\m S_s),
$$
it suffices to show that the image of $\times L$ is not contained in $\m S_s .$ The assertion is obvious if $s=t.$ If $s>t$, as $\overline{L}\notin H_{S_{+}}^0(S\otimes_R k)_t$ and $S_{s-t}=(S_{+})^{s-t}$, there exist $u\in S_{s-t}$ such that $\overline{L}.\overline{u}\not= 0$. Hence the image of $\times L$ contains $L.u\not\in \m S_s.$
\end{proof}

\begin{lem} \label{Lemma6.6}
Let $(R,\m ,k)$ be a local Noetherian ring and $M$ be a finitely generated $R$-module. For any $s\geq t,$ if there exists a $R$-module isomorphism 
$$\varphi : \Hom_R(\Sym^{s -t}_R(M),\Sym^{s}_R(M))\longrightarrow \Sym^{t}_R(M),$$ 
then the natural map given by the algebra structure of $\Sym_R(M)$
$$
\psi : \Sym^{t}_R(M) \longrightarrow \Hom_R(\Sym^{s -t}_R(M),\Sym^{s}_R(M))
$$ 
 is an isomorphism.
\end{lem}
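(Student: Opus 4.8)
### Proof proposal

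The plan is to exploit the hypothesis that $\varphi$ is an isomorphism together with Lemma~\ref{Lemma6.5} to show that $\psi$, which is already known to be injective at the level of $\otimes k$, must in fact be an isomorphism. First I would set $A:=\Sym_R(M)$ and observe that $A$ is a Noetherian standard graded $R$-algebra, so Lemma~\ref{Lemma6.5} applies once we check the vanishing hypothesis $H^0_{A_+}(A\otimes_R k)_t=0$. But $A\otimes_R k=\Sym_k(M\otimes_R k)$ is a polynomial ring over a field, hence a domain with no nonzero elements killed by a power of the irrelevant ideal in positive degree; so $H^0_{A_+}(A\otimes_R k)_t=0$ for every $t\geq 1$ (and trivially for $t=0$ since $\psi$ is then the identity). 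Therefore $\psi\otimes k$ is injective.

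The key numerical input is that $\varphi$ identifies the two modules $P:=\Sym^t_R(M)$ and $Q:=\Hom_R(\Sym^{s-t}_R(M),\Sym^s_R(M))$, so in particular $\mu_R(P)=\mu_R(Q)$, i.e. $\dim_k(P\otimes_R k)=\dim_k(Q\otimes_R k)$. Now $\psi\colon P\to Q$ is $R$-linear; tensoring with $k$ gives an injective $k$-linear map $\psi\otimes k\colon P\otimes_R k\hookrightarrow Q\otimes_R k$ between finite-dimensional $k$-vector spaces of the same dimension, hence it is an isomorphism. By Nakayama's lemma (both $P$ and $Q$ are finitely generated $R$-modules — $Q$ because $\Sym^s_R(M)$ is finitely generated and $\Sym^{s-t}_R(M)$ is finitely presented over the Noetherian ring $R$), surjectivity of $\psi\otimes k$ forces $\psi$ itself to be surjective.

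It then remains to see that $\psi$ is injective. Here I would use that $\psi$ is a surjection $P\twoheadrightarrow Q$ together with the existence of the isomorphism $\varphi\colon Q\xrightarrow{\ \sim\ }P$; composing, $\varphi\circ\psi\colon P\twoheadrightarrow P$ is a surjective endomorphism of a finitely generated module over the Noetherian (hence in particular commutative) ring $R$, and such an endomorphism is automatically injective — this is the standard fact that a surjective endomorphism of a Noetherian module is an isomorphism (apply the determinant trick / Cayley–Hamilton, or note the descending chain of kernels of iterates stabilizes). Consequently $\varphi\circ\psi$ is an isomorphism, so $\psi$ is injective, and combined with the surjectivity established above, $\psi$ is an isomorphism.

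The main obstacle I anticipate is making sure the finiteness and standard-graded hypotheses of Lemma~\ref{Lemma6.5} are genuinely met — in particular that $\Hom_R(\Sym^{s-t}_R(M),\Sym^s_R(M))$ is finitely generated so that Nakayama applies — and correctly checking that $H^0_{A_+}(A\otimes_R k)_t$ vanishes in the relevant degrees; everything else is a clean combination of Nakayama and the ``surjective endomorphism of a Noetherian module is injective'' principle. One should also note that the case $t=0$ (or $t=s$) is degenerate and handled directly, since then $\psi$ is an identity map or the canonical identification $S_s\cong\Hom_R(R,S_s)$.
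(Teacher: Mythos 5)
Your proof is correct and follows essentially the same route as the paper's: invoke Lemma~\ref{Lemma6.5} (after checking the vanishing hypothesis via the identification $\Sym_R(M)\otimes_R k\simeq\Sym_k(M\otimes_R k)$, a polynomial ring) to get $\psi\otimes k$ injective, then combine this with Nakayama and the surjective-endomorphism principle using the given isomorphism $\varphi$. You merely spell out the equivalences that the paper compresses into the single sentence ``the assertion of the lemma is equivalent to show that $\varphi\circ\psi$ is onto, which in turn is equivalent to $\psi\otimes_R k$ being into (or equivalently onto).''
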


\begin{proof}
The assertion of the lemma is  equivalent to show that $\varphi \circ \psi$ is onto, which in turn is equivalent to $\psi \otimes_R k$ being into (or equivalently onto).\\

Choose $\tau_1 : R^n \longrightarrow M$ onto with $n$ minimal (equivalently such that $R^n\otimes_R k\simeq M\otimes_R k$ via $\tau_1$). 
Then $\tau :=\Sym_R(\tau_1 ):\Sym_R(R^n)\longrightarrow \Sym_R(M)$ is onto and  $\tau\otimes_R k$ is an isomorphism identifying $\Sym_R(M)\otimes_R k$ with a polynomial ring in $n$ variables. It follows that $S=\Sym_R(M)$ satisfies the condition of Lemma~\ref{Lemma6.5}, hence $\psi \otimes k$ is into.
\end{proof}
Note that  $\Sym_{R/J}^0(I/\ia)=R/J$ and $\Sym_{R/J}^k(I/\ia)=\Sym_{R}^k(I/\ia)$ for $k>0$. We have the following results.
\begin{theo}\label{Theorem6.7}
Let $(R,\m)$ be a Gorenstein local ring  and let  $\ia\subset I$ be two ideals of $R,$ with $\hei(I)=g.$ Suppose that $J=(\ia\colon_R I)$ is an $s$-residual intersection of $I.$ If $I$ is strongly Cohen-Macaulay, then $\omega_{R/J}\simeq \Sym_{R/J}^{s-g+1}(I/\ia)$ and for all $0\leq k\leq s-g+1$
\begin{enumerate}
\item[\em (i)] the $R/J$-module $\Sym_{R/J}^{k}(I/\ia)$ is faithful and Cohen-Macaulay,
\item[\em (ii)] the multiplication
\begin{displaymath}
\Sym_{R/J}^k(I/\ia)\otimes_{R/J} \Sym_{R/J}^{s-g+1-k}(I/\ia)\longrightarrow \Sym_{R/J}^{s-g+1}(I/\ia)
\end{displaymath}
is a perfect pairing, 
\item[\em (iii)] setting $A:=\Sym_{R/J}(I/\ia)$, the graded $R/J$-algebra
$$\overline{A}:=A/A_{>s-g+1}=\bigoplus_{i=0}^{s-g+1}\Sym_{R/J}^i(I/\ia) 
$$
is Gorenstein.
\end{enumerate}
\end{theo}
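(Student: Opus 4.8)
The strategy is to deduce everything from Theorems~\ref{Theorem4.5}, \ref{Theorem4.8} and~\ref{Theorem6.2}, Corollary~\ref{Corollary4.6}, and the two elementary Lemmas~\ref{Lemma6.5}--\ref{Lemma6.6}, the latter being exactly what is needed to upgrade the \emph{abstract} dualities already proved to dualities realized by the ring multiplication. Since $R$ is Gorenstein we have $\omega\simeq R$, hence $\Tor_1^R(R/I,\omega)\simeq\Tor_1^R(R/I,R)=0$, and strong Cohen-Macaulayness of $I$ implies $\SD_1$ and $\SD_2$. Thus Theorem~\ref{Theorem4.5} gives that $R/J$ is Cohen-Macaulay of dimension $d-s$, and Theorem~\ref{Theorem4.8} gives $\omega_{R/J}\simeq\Sym_R^{s-g+1}(I/\ia)=\Sym_{R/J}^{s-g+1}(I/\ia)$, the last equality because $I/\ia$ is an $R/J$-module and $s-g+1\geq1$. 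For $1\leq k\leq s-g+1$ item (i) is Corollary~\ref{Corollary4.6}(ii), while for $k=0$ the module $\Sym_{R/J}^0(I/\ia)=R/J$ is Cohen-Macaulay by Theorem~\ref{Theorem4.5} and faithful over itself.

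For (ii), set $C=R/J$, $M=I/\ia$, $A=\Sym_C(M)$ and $\ell=s-g+1$, so $A_k=\Sym_{R/J}^k(I/\ia)$ and $A_\ell\simeq\omega_C$. Fixing $0\leq k\leq\ell$, and noting that the second partial map of the pairing in degree $k$ is the first partial map in degree $\ell-k$, it suffices to show that the natural multiplication map $\psi\colon A_k\to\Hom_C(A_{\ell-k},A_\ell)$ is an isomorphism for every $k$. Since $A_{\ell-k}$ is a maximal Cohen-Macaulay $C$-module (by (i) if $\ell-k\geq1$, trivially if $\ell-k=0$) and $A_\ell\simeq\omega_C$, we have $\Hom_C(A_{\ell-k},A_\ell)\simeq\omega_{A_{\ell-k}}$. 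Now $\omega_{A_{\ell-k}}\simeq A_k$: for $1\leq\ell-k\leq s-g$ this is Theorem~\ref{Theorem6.2}(i) together with $\omega\simeq R$; for $\ell-k=0$ it reads $\Hom_C(C,\omega_C)\simeq\omega_C=A_\ell$; and for $\ell-k=\ell$ it reads $\Hom_C(\omega_C,\omega_C)\simeq C=A_0$, using that $C$ is Cohen-Macaulay. Hence there is a $C$-isomorphism $\Hom_C(\Sym_C^{\ell-k}(M),\Sym_C^\ell(M))\simeq\Sym_C^k(M)$, and Lemma~\ref{Lemma6.6} (applied over $C$ with $s=\ell$, $t=k$) forces the natural map $\psi$ to be an isomorphism, proving (ii).

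For (iii), by (i) the graded $C$-algebra $\overline{A}=\bigoplus_{i=0}^{\ell}A_i$ is module-finite and maximal Cohen-Macaulay over $C$; being faithful over $C$ it is a Cohen-Macaulay ring of dimension $d-s$ with $\omega_{\overline{A}}\simeq\Hom_C(\overline{A},\omega_C)=\bigoplus_{i=0}^{\ell}\Hom_C(A_i,A_\ell)$. Let $\pi\colon\overline{A}\to A_\ell\simeq\omega_C$ be the projection onto the top graded component and consider the $\overline{A}$-linear map $\Phi\colon\overline{A}\to\Hom_C(\overline{A},\omega_C)$ defined by $\Phi(a)=\bigl(a'\mapsto\pi(aa')\bigr)$. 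For $a\in A_k$ one has $\pi(aa')=0$ unless $a'\in A_{\ell-k}$ (for degree reasons, products landing in degree $>\ell$ vanishing in $\overline{A}$), so $\Phi$ restricts on $A_k$ to the multiplication map $A_k\to\Hom_C(A_{\ell-k},A_\ell)$, which is an isomorphism by (ii); hence $\Phi$ is an isomorphism. Therefore $\omega_{\overline{A}}$ is a free $\overline{A}$-module of rank one (up to a shift in grading), so $\overline{A}$ is Gorenstein. The displayed formula $\omega_{R/J}\simeq\Sym_{R/J}^{s-g+1}(I/\ia)$ was obtained in the first paragraph.

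The main obstacle is not Cohen-Macaulayness, which is inherited from Theorems~\ref{Theorem4.5} and~\ref{Theorem4.8} and Corollary~\ref{Corollary4.6}, but the passage in (ii) and (iii) from a merely abstract duality to one implemented by the ring multiplication; as the example following Corollary~\ref{Corollary6.3} shows, these genuinely differ for $I^t/\ia I^{t-1}$ in general. This is precisely the role of Lemmas~\ref{Lemma6.5}--\ref{Lemma6.6}: they use that $A=\Sym_C(M)$ is generated in degree one, so that $A\otimes_C k$ is a polynomial ring with no $A_+$-torsion in the relevant degrees, forcing the natural map $\psi$ to be injective and hence, once a source-to-target isomorphism is known to exist, bijective. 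One must also be careful in (iii) to verify that each graded strand of $\Phi$ is exactly the multiplication pairing of (ii), and not merely abstractly isomorphic to it, so that the conclusion about $\omega_{\overline{A}}$ is a statement about $\overline{A}$-modules and not just $C$-modules.
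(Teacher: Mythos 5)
Your proof is correct and follows the same route as the paper's own (very terse) proof, which just says that (i) is Corollary~\ref{Corollary4.6}(ii) and that (ii), (iii) follow from Lemma~\ref{Lemma6.6} together with Theorem~\ref{Theorem6.2}; you spell out the details carefully. The only mild divergence is in (ii): the paper cites Theorem~\ref{Theorem6.2}(ii), whereas you go through Theorem~\ref{Theorem6.2}(i) to get the abstract isomorphism $\Hom_C(A_{\ell-k},A_\ell)\simeq A_k$ and then feed it to Lemma~\ref{Lemma6.6} --- a slightly more direct path, since Lemma~\ref{Lemma6.6} only needs an abstract source-target isomorphism, and Theorem~\ref{Theorem6.2}(i) hands you exactly that (via $\Hom_C(A_{\ell-k},\omega_C)\simeq\omega_{A_{\ell-k}}$); your separate treatment of the boundary cases $\ell-k\in\{0,\ell\}$ and of $k=0$ in (i), which fall outside the range of Theorem~\ref{Theorem6.2}(i) and Corollary~\ref{Corollary4.6}(ii), fills small gaps the paper glosses over. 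For (iii) your explicit $\overline{A}$-linear map $\Phi$ whose graded strands are precisely the pairings from (ii) is the right way to make ``directly follows'' precise, and the point you flag --- that one must check each strand of $\Phi$ is the multiplication map of (ii) and not merely abstractly isomorphic to it --- is exactly the content that Lemma~\ref{Lemma6.6} was designed to supply.
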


\begin{proof}
The first item is Corollary \ref{Corollary4.6} (ii). The second and last items directly  follow from Lemma~\ref{Lemma6.6} together with Theorem~\ref{Theorem6.2} (ii) and (i), respectively. 
\end{proof}

%#########################################################

\begin{cor} \label{Corollary6.5}
Let $(R,\m)$ be a positively graded Cohen-Macaulay $^\ast$local algebra of dimension $d$ over an Artinian local ring $R_0,$ with canonical module $\omega.$ Suppose that $\ia \subset I$ are two homogeneous ideals of $R,$ with $\hei(I)=g,$ and  $J=(\ia\colon_R I)$ is an $s$-residual intersection of $I$. Write 
\begin{align*}
H_{\Sym_R^k(I/\ia)}(t)=\frac{P_k(t)}{ (1-t^{a})^{d-s}},\quad H_{\Sym_R^k(I/\ia)\otimes_R \omega}(t)=\frac{Q_k(t)}{ (1-t^{a})^{d-s}},
\end{align*}
with $a$ the least common multiple of the degrees of the generators of the algebra $R$ over $R_0$ and $P_k(t),Q_k(t)\in \Z[t,t^{-1}],$ with $P_k(1),Q_k(1)>0,$ for each $1\leq k\leq s-g$.
If $I$ is strongly Cohen-Macaulay and $\Tor_1^R(R/I,\omega)=0,$ then 
$$
P_k(t)=t^{\sigma(\ia) +a(d-s)}Q_{s-g+1-k}(t^{-1}).
$$
In particular, if $R$ is generated over $R_0$ by elements of degree one, that is, $R=R_0[R_1],$ then 
$$
e(\Sym_R^k(I/\ia))=e(\Sym_R^{s-g+1-k}(I/\ia)\otimes_R \omega).
$$
\end{cor}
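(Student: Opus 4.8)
The plan is to mimic exactly the argument used in Corollary~\ref{Corollary5.5}, replacing the single canonical-module identity $\omega_{R/J}\simeq\Sym_R^{s-g+1}(I/\ia)\otimes_R\omega(\sigma(\ia))$ by the family of identities coming from Theorem~\ref{Theorem6.2}(i). First I would record the graded version of Theorem~\ref{Theorem6.2}(i): exactly as Proposition~\ref{Proposition5.3} is the graded counterpart of Theorem~\ref{Theorem4.8}, one has, for each $1\leq k\leq s-g$,
\begin{displaymath}
\omega_{\Sym_R^k(I/\ia)}\simeq \Sym_R^{s-g+1-k}(I/\ia)\otimes_R\omega\,(\sigma(\ia)),
\end{displaymath}
the grading being forced by the fact that the generators of $\ia$ contribute a shift by $\sigma(\ia)$ through the twists of $K_\bullet(\gamma;S)$ (this is precisely the bookkeeping already carried out in Section~\ref{Section5}). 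Since $I$ strongly Cohen-Macaulay implies, via Theorem~\ref{Theorem3.9} and Theorem~\ref{Theorem4.5}, that $R/J$ is Cohen-Macaulay of dimension $d-s$ and that each $\Sym_R^k(I/\ia)$ is a maximal Cohen-Macaulay $R/J$-module (Corollary~\ref{Corollary4.6}(ii)), the module $\Sym_R^k(I/\ia)$ has dimension $d-s$ and its canonical module is well-behaved.

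Next I would apply \cite[Corollary~4.4.6]{BH98} to the maximal Cohen-Macaulay module $M=\Sym_R^k(I/\ia)$ over $R/J$: it gives
\begin{displaymath}
H_{\omega_{\Sym_R^k(I/\ia)}}(t)=(-1)^{d-s}H_{\Sym_R^k(I/\ia)}(t^{-1}).
\end{displaymath}
Substituting the graded identity above for the left-hand side and stripping off the twist $(\sigma(\ia))$ (which multiplies the Hilbert series by $t^{\sigma(\ia)}$) yields
\begin{displaymath}
H_{\Sym_R^{s-g+1-k}(I/\ia)\otimes_R\omega}(t)=(-1)^{d-s}\,t^{\sigma(\ia)}\,H_{\Sym_R^k(I/\ia)}(t^{-1}).
\end{displaymath}
Writing both sides in the normalized form with denominator $(1-t^a)^{d-s}$ and clearing denominators — using that $(1-t^{-a})^{d-s}=(-1)^{d-s}t^{-a(d-s)}(1-t^a)^{d-s}$ — gives $Q_{s-g+1-k}(t)=t^{\sigma(\ia)+a(d-s)}P_k(t^{-1})$, i.e. the asserted relation $P_k(t)=t^{\sigma(\ia)+a(d-s)}Q_{s-g+1-k}(t^{-1})$. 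The ``in particular'' statement then follows by setting $t=1$: when $R=R_0[R_1]$ one takes $a=1$, so $P_k(1)=Q_{s-g+1-k}(1)$, and by \cite[Proposition~4.1.9]{BH98} these evaluations are the multiplicities $e(\Sym_R^k(I/\ia))$ and $e(\Sym_R^{s-g+1-k}(I/\ia)\otimes_R\omega)$ respectively.

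The only real point requiring care — the ``main obstacle'', though it is more a matter of cross-referencing than of genuine difficulty — is justifying that the positivity hypotheses $P_k(1),Q_k(1)>0$ are legitimate and that \cite[Corollary~4.4.6]{BH98} applies, which both rest on $\Sym_R^k(I/\ia)$ being a maximal Cohen-Macaulay $R/J$-module of the correct dimension; this is guaranteed by Corollary~\ref{Corollary4.6}(ii) under the strong Cohen-Macaulayness hypothesis, so no extra work is needed. One should also note that the hypothesis $\Tor_1^R(R/I,\omega)=0$ enters exactly where it did in Theorem~\ref{Theorem6.2}, ensuring the symmetric-power description of the canonical module rather than merely a surjection; it plays no further role in the Hilbert-series manipulation. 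Everything else is the same formal computation as in the proof of Corollary~\ref{Corollary5.5}, run for each $k$ in the stated range.
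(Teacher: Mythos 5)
Your proposal is correct and follows essentially the same route as the paper: the paper's proof of Corollary~\ref{Corollary6.5} simply declares it analogous to Corollary~\ref{Corollary5.5} and records the identity $H_{\Sym_R^k(I/\ia)\otimes_R \omega}(t)=(-1)^{d-s}t^{\sigma(\ia)}H_{\Sym_R^{s-g+1-k}(I/\ia)}(t^{-1})$, which is exactly the relation you derive by combining the graded form of Theorem~\ref{Theorem6.2}(i) with \cite[Corollary~4.4.6]{BH98} and then clearing the denominators $(1-t^a)^{d-s}$. Your remarks about where the maximal Cohen-Macaulayness (Corollary~\ref{Corollary4.6}(ii)) and the hypothesis $\Tor_1^R(R/I,\omega)=0$ enter match the intended justifications.
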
  
\begin{proof}
The proof is analogous to one of Corollary~\ref{Corollary5.5}. It follows from the fact that   
\begin{equation*}
H_{\Sym_R^k(I/\ia)\otimes_R \omega}(t)=(-1)^{d-s}t^{\sigma(\ia)}H_{\Sym_R^{s-g+1-k}(I/\ia)}(t^{-1}).
\end{equation*}
\end{proof}
%########################################
The next corollary enables us to calculate the type of some symmetric powers of $I/\ia.$ This is comparable with the results of Hassanzadeh and the second named author in \cite[Theorem~2.12]{HN16}.
\begin{cor} \label{Corollary6.6}
Let $(R,\m)$ be a Cohen-Macaulay local ring of dimension $d,$ with canonical module $\omega,$ and let $\ia\subset I$ be two ideals of $R,$ with $\hei(I)=g.$ Suppose that $J=(\ia\colon_R I)$ is an $s$-residual intersection of $I.$ If  $I$ is strongly Cohen-Macaulay and $\Tor_1^R(R/I,\omega)=0,$ then, for each $1\leq k\leq s-g,$
\begin{align*}
r(\Sym_R^{k}(I/\ia))=\binom{\mu(I/\ia)+s-g-k}{\mu(I/\ia)-1}r(R).
\end{align*}
\end{cor}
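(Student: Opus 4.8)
The plan is to compute the type $r(\Sym_R^k(I/\ia))$ as the minimal number of generators of its canonical module, which has already been identified in Theorem~\ref{Theorem6.2}(i) under exactly these hypotheses. First I would recall that for a Cohen-Macaulay module $M$ over a Cohen-Macaulay local ring admitting a canonical module, one has $r(M)=\mu(\omega_M)$; this is \cite[Proposition~3.3.11]{BH98}, and it is applicable since $\Sym_R^k(I/\ia)$ is a maximal Cohen-Macaulay $R/J$-module by Corollary~\ref{Corollary4.6}(ii) (strong Cohen-Macaulayness implies $\SD_1$, so that corollary applies). By Theorem~\ref{Theorem6.2}(i), for $1\le k\le s-g$ the canonical module is
\begin{align*}
\omega_{\Sym_R^k(I/\ia)}\simeq \Sym_R^{s-g+1-k}(I/\ia)\otimes_R\omega,
\end{align*}
so $r(\Sym_R^k(I/\ia))=\mu\big(\Sym_R^{s-g+1-k}(I/\ia)\otimes_R\omega\big)$.

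Next I would use the multiplicativity of $\mu$ under tensor products recorded just before Corollary~\ref{Corollary4.11}: $\mu(M\otimes_R N)=\mu(M)\mu(N)$, which gives
\begin{align*}
r(\Sym_R^k(I/\ia))=\mu\big(\Sym_R^{s-g+1-k}(I/\ia)\big)\cdot\mu(\omega)=\mu\big(\Sym_R^{s-g+1-k}(I/\ia)\big)\cdot r(R).
\end{align*}
It remains to compute $\mu(\Sym_R^{s-g+1-k}(I/\ia))$, and this is done exactly as in the proof of Corollary~\ref{Corollary4.11}: since $I/\ia\otimes_R R/\m$ is an $R/\m$-vector space of dimension $\mu(I/\ia)$, the symmetric algebra $\Sym_{R/\m}(I/\ia\otimes_R R/\m)$ is a polynomial ring in $\mu(I/\ia)$ variables over $R/\m$, so its degree-$(s-g+1-k)$ component has dimension $\binom{\mu(I/\ia)+s-g-k}{\mu(I/\ia)-1}$. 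Hence
\begin{align*}
\mu\big(\Sym_R^{s-g+1-k}(I/\ia)\big)=\dim_{R/\m}\Sym_{R/\m}^{s-g+1-k}\big(I/\ia\otimes_R R/\m\big)=\binom{\mu(I/\ia)+s-g-k}{\mu(I/\ia)-1},
\end{align*}
and combining the displays yields the claimed formula.

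This argument is essentially a bookkeeping exercise stringing together results already established, so there is no serious obstacle; the one point requiring a little care is checking that the hypotheses of Theorem~\ref{Theorem6.2} and Corollary~\ref{Corollary4.6}(ii) are met — namely that strong Cohen-Macaulayness of $I$ implies $\SD_1$ (and indeed $\SD_2$), and that $\Tor_1^R(R/I,\omega)=0$ is assumed — so that both the identification of the canonical module and the maximal Cohen-Macaulayness of $\Sym_R^k(I/\ia)$ are available. One should also note the range $1\le k\le s-g$ is exactly what makes $s-g+1-k$ lie in $\{1,\dots,s-g\}$, so Theorem~\ref{Theorem6.2}(i) applies to $\Sym_R^{s-g+1-k}(I/\ia)$ as well.
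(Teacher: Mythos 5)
Your proposal is correct and follows essentially the same route as the paper: identify $\omega_{\Sym_R^k(I/\ia)}$ via Theorem~\ref{Theorem6.2}(i), invoke \cite[Proposition~3.3.11]{BH98} to convert the type into $\mu$ of the canonical module, and then compute $\mu(\Sym_R^{s-g+1-k}(I/\ia)\otimes_R\omega)$ by the multiplicativity of $\mu$ and the polynomial-ring count exactly as in Corollary~\ref{Corollary4.11}. The paper's proof is terser (it simply says "totally similar to Corollary~\ref{Corollary4.11}"), but the substance is the same; your added remarks on why Corollary~\ref{Corollary4.6}(ii) and Theorem~\ref{Theorem6.2} are applicable are accurate and helpful.
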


\begin{proof}
The proof is totally similar to one of Corollary~\ref{Corollary4.11}.
For all $1\leq k\leq s-g,$ 
\begin{align*}
r(\Sym_R^{k}(I/\ia))&=\mu(\Sym_R^{s-g+1-k}(I/\ia)\otimes_R \omega),
\end{align*}
by Theorem~\ref{Theorem6.2}(i) and \cite[Proposition~3.3.11]{BH98}.
\end{proof}

Let $R$ be a Noetherian ring, $M$ be a finitely generated $R$-module and $\point\in \Spec(R).$ The finite number  $$\mu_i(\point,M)=\dim_{k(\point)}(\Ext_{R_\point}^{i}(k(\point),M_\point))=\dim_{k(\point)}(\Ext_R^{i}(R/\point,M)_\point)$$
is called the \textit{$i$-th Bass number} of $M$ with respect to $\point,$ where $k(\point)=R_\point/\point R_\point.$ If $R$ is local, then $r(M)=\mu_{\depth(M)}(\m, M).$ These numbers have an interpretation in terms of the minimal injective resolution of $M,$ (see \cite[Proposition~3.2.9]{BH98}). The next corollary enables us to calculate the Bass numbers of some symmetric powers of $I/\ia.$

\begin{cor} \label{Corollary6.7}
Under the assumptions of Corollary~\ref{Corollary6.6}.  Let $\point$ be a prime ideal containing $J$ of $R,$ with  $\hei(\point)=i,$  then, for every $1\leq k\leq s-g,$
\begin{displaymath}
\mu_{i-s}(\point,\Sym_R^{k}(I/\ia)\otimes_R \omega)=\binom{\mu((I/\ia)_\point)+s-g-k}{\mu((I/\ia)_\point)-1}.
\end{displaymath}
 \end{cor}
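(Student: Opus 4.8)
The plan is to localize at $\point$ and identify the module in question with the canonical module of a maximal Cohen--Macaulay module, then read off its first Bass number. Write $A:=R_\point$, $\mathfrak n:=\point R_\point$, $\kappa:=A/\mathfrak n$, and $N:=\Sym_A^{s-g+1-k}\big((I/\ia)_\point\big)$. Since $1\le s-g+1-k\le s-g$, Theorem~\ref{Theorem6.2}(i) applied with the index $s-g+1-k$ in place of $k$ gives $\Sym_R^k(I/\ia)\otimes_R\omega\simeq\omega_{\Sym_R^{s-g+1-k}(I/\ia)}$; localizing at $\point$, and using the compatibility of the canonical module with localization over an epimorphic image of a Gorenstein ring, we obtain $\big(\Sym_R^k(I/\ia)\otimes_R\omega\big)_\point\simeq\omega_N$. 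By Theorem~\ref{Theorem4.5}, $R/J$ is Cohen--Macaulay of codimension $s$, hence $J$ is unmixed of height $s$; as $\point\supseteq J$ this forces $\hei(JA)=s$ and $\dim(R/J)_\point=i-s$, and by Corollary~\ref{Corollary4.6}(ii), localized, $N$ is a maximal Cohen--Macaulay $(R/J)_\point$-module, hence a Cohen--Macaulay $A$-module of codimension $s$.

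Next I would run the duality over $A$. As $N$ is Cohen--Macaulay of codimension $s$ over the Cohen--Macaulay local ring $A$ with canonical module $\omega_A$, one has $\Ext_A^j(N,\omega_A)=0$ for $j\ne s$ and $\Ext_A^s(N,\omega_A)=\omega_N$ \cite[Theorem~3.3.10]{BH98}, so that $\mathbf R\!\Hom_A(N,\omega_A)\simeq\omega_N[-s]$ (that is, $\omega_N$ placed in cohomological degree $s$). Derived Hom--tensor adjunction then gives
\[
\mathbf R\!\Hom_A(\kappa,\omega_N)[-s]\;\simeq\;\mathbf R\!\Hom_A\big(\kappa\otimes_A^{\mathbf L}N,\ \omega_A\big).
\]
Resolving $N$ by a minimal free complex, $\kappa\otimes_A^{\mathbf L}N$ is formal, quasi-isomorphic to its homology, a sum of copies of $\kappa$ in homological degree $q$ with multiplicity the Betti number $\beta_q(N)$, and $\beta_0(N)=\mu_A(N)$. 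Since the canonical module of the $i$-dimensional Cohen--Macaulay local ring $A$ satisfies $\Ext_A^p(\kappa,\omega_A)\simeq\delta_{p,i}\,\kappa$ \cite[Theorem~3.3.10]{BH98}, the right-hand side has $H^j\simeq\kappa^{\beta_{j-i}(N)}$, whence $\Ext_A^{j-s}(\kappa,\omega_N)\simeq\kappa^{\beta_{j-i}(N)}$; taking $j=i$ gives $\dim_\kappa\Ext_A^{i-s}(\kappa,\omega_N)=\beta_0(N)=\mu_A(N)$. Therefore
\[
\mu_{i-s}\big(\point,\Sym_R^k(I/\ia)\otimes_R\omega\big)=\dim_\kappa\Ext_A^{i-s}(\kappa,\omega_N)=\mu_A(N).
\]

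Finally I would compute $\mu_A(N)$. As symmetric powers commute with base change, $N\otimes_A\kappa\simeq\Sym_\kappa^{s-g+1-k}\big((I/\ia)_\point\otimes_A\kappa\big)$, whose dimension over $\kappa$ is the number of monomials of degree $s-g+1-k$ in $\mu((I/\ia)_\point)$ indeterminates, namely $\binom{\mu((I/\ia)_\point)+s-g-k}{\mu((I/\ia)_\point)-1}$. This is exactly the value claimed for $\mu_{i-s}(\point,\Sym_R^k(I/\ia)\otimes_R\omega)$.

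The heart of the matter is the second paragraph: the main obstacle is to push the Bass-number computation cleanly through the duality $\mathbf R\!\Hom_A(N,\omega_A)\simeq\omega_N[-s]$, the decisive input being the rigidity $\Ext_A^p(\kappa,\omega_A)\simeq\delta_{p,i}\,\kappa$ of the canonical module of a Cohen--Macaulay local ring, which collapses the relevant spectral sequence. The surrounding points --- unmixedness of $J$, compatibility of $\omega_{(-)}$ with localization, and the monomial count --- are routine.
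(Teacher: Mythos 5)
Your proof is correct. It reaches the same formula by a genuinely different route than the paper's. You package the duality once and for all as a derived isomorphism $\mathbf{R}\!\Hom_A(N,\omega_A)\simeq\omega_N[-s]$ over $A=R_\point$ and then use derived Hom--tensor adjunction together with the formality of $\kappa\otimes_A^{\mathbf L}N$ (minimality of the resolution) and the rigidity $\Ext_A^p(\kappa,\omega_A)\simeq\delta_{p,i}\,\kappa$ to collapse directly to $\mu_{i-s}(\point,\omega_N)=\beta_0(N)=\mu_A(N)$. The paper instead picks an explicit $R/J$-regular sequence $\ib=(b_{s+1},\ldots,b_i)$ in $\point$ supported on a chain of primes, passes through the identification $\Sym_R^{k}(I/\ia)\otimes_R\omega\simeq\Hom_R(\Sym_R^{s-g+1-k}(I/\ia),\omega_{R/J})$ from Theorems~\ref{Theorem4.8} and~\ref{Theorem6.2}(ii), reduces modulo $\ib$ via \cite[Lemma~1.2.4, Proposition~3.3.3]{BH98}, and finally invokes $\mu_{i-s}(\point,\omega_{R/J})=1$ by \cite[Theorem~3.3.10]{BH98}. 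Both arguments therefore rest on the same Bass-number rigidity of a canonical module and the same base-change computation for symmetric powers; the underlying inputs are essentially identical. What your approach buys is economy: it needs only Theorem~\ref{Theorem6.2}(i), bypasses the perfect-pairing statement~\ref{Theorem6.2}(ii) entirely, and avoids choosing an explicit regular sequence, at the cost of invoking derived-category language; the paper's version is more elementary and self-contained within the module-theoretic toolbox it has already set up. One small point worth making explicit in your write-up: the step $(\Sym_R^k(I/\ia)\otimes_R\omega)_\point\simeq\omega_N$ relies on the compatibility of $\omega_{(-)}$ with localization for Cohen--Macaulay modules over a Cohen--Macaulay local ring (so that the codimension in the ambient Gorenstein cover is preserved under localization); you flag this as routine, which it is, but it is the only place where something could go wrong if $R$ were not assumed Cohen--Macaulay.
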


\begin{proof}
By Theorem~\ref{Theorem4.5}, $R/J$ is Cohen-Macaulay of dimension $d-s$ and by Corollary~\ref{Corollary4.6}(ii),  $\Sym_R^{k}(I/\ia)$ is a maximal Cohen-Macaulay faithful $R/J$-module, for all $1\leq k\leq s-g+1.$ Furthermore, by Theorem~\ref{Theorem6.2}(i),  $\Sym_R^{k}(I/\ia)\otimes_R \omega$ is a  maximal Cohen-Macaulay faithful $R/J$-module, for all $1\leq k\leq s-g.$ 

\smallskip
Suppose that $J\subset\point_s\subsetneq \point_{s+1}\subsetneq\cdots \subsetneq \point_{i}=\point$ is a maximal chain of primes of $\Spec(R/J)$ contained in $\point.$ Let $b_j\in \point_j-\point_{j-1},$ for all $s+1\leq j\leq i.$ Then $\ib=(b_{s+1},\ldots,b_i)$  is a regular sequence over $R/J$ and therefore also over $\Sym_R^{k}(I/\ia)$ and $\Sym_R^{k}(I/\ia)\otimes_R \omega$, for all $1\leq k\leq s-g.$ 
%By \cite[Corollary~1.1.3]{BH98}, $(b_{s+1},\ldots,b_i)$ is a regular sequence over $(R/J)_\point,$ over $\Sym_R^{k}(I/\ia)_\point$ and also over $(\Sym_R^{k}(I/\ia)\otimes_R \omega)_\point,$ for all $1\leq k\leq s-g.$  

\medskip
For $1\leq k\leq s-g,$ $(b_{s+1},\ldots,b_i)$ is a regular sequence over $(\Sym_R^{k}(I/\ia)\otimes_R \omega)_\point$ and annihilates $k(\point)$,  hence \cite[Lemma~1.2.4]{BH98} gives
\begin{align*}
&\Ext_{R_\point}^{i-s}(k(\point), (\Sym_R^{k}(I/\ia)\otimes_R \omega)_\point)\simeq \Hom_{R_\point}\big(k(\point),(\Sym_R^{k}(I/\ia)\otimes_R \omega)_\point\otimes_{R_\point} R_\point/\ib R_\point\big)\\
%&\qquad \qquad \qquad \qquad \simeq \Hom_{R_\point}\big(k(\point),\Sym_R^{k}(I/\ia)\otimes_R \omega\otimes_R R_\point\otimes_{R_\point} R_\point/\ib R_\point\big)\\
&\qquad \qquad \qquad \qquad \simeq \Hom_{R_\point}\Big(k(\point),\Hom_R(\Sym_R^{s-g+1-k}(I/\ia),\omega_{R/J})\otimes_R R_\point/\ib R_\point\Big).
\end{align*}
The last isomorphism follows from Theorem~\ref{Theorem4.8} and Theorem~\ref{Theorem6.2}(ii). By \cite[Proposition~3.3.3]{BH98}
\begin{align*}
\Hom_R\big(\Sym_R^{s-g+1-k}&(I/\ia), \omega_{R/J}\big)\otimes_R R_\point/\ib R_\point\\
&\simeq \Hom_{R_\point}\Big(\Sym_R^{s-g+1-k}(I/\ia)\otimes_R R_\point/\ib R_\point,\omega_{R/J}\otimes_R R_\point/\ib R_\point\Big).
\end{align*}
Thus, we obtain
\begin{small}
\begin{align*}
\Ext_{R_\point}^{i-s}(k(\point),&(\Sym_R^{k}(I/\ia)\otimes_R \omega)_\point)\\
&\simeq \Hom_{R_\point}\Big(k(\point),\Hom_{R_\point}\big(\Sym_R^{s-g+1-k}(I/\ia)\otimes_R R_\point/\ib R_\point,\omega_{R/J}\otimes_R R_\point/\ib R_\point\big)\Big)\\
&\simeq \Hom_{R_\point}\Big(k(\point)\otimes_{R_\point} \Sym_R^{s-g+1-k}(I/\ia)\otimes_R R_\point/\ib R_\point,\omega_{R/J}\otimes_R R_\point/\ib R_\point\Big)\\
&\simeq \Hom_{R_\point}\Big( k(\point)\otimes_R\Sym_R^{s-g+1-k}(I/\ia),\omega_{R/J}\otimes_R R_\point/\ib  R_\point\Big)\\
&\simeq \Hom_{R_\point}\Big( \Sym_{k(\point)}^{s-g+1-k}(k(\point)\otimes_R I/\ia),\omega_{R/J}\otimes_R R_\point/\ib R_\point\Big).
\end{align*}
\end{small}
Since  $k(\point)\otimes_R I/\ia\simeq k(\point)\otimes_{R_\point} (I/\ia)_\point$ is a $k(\point)$-vector space of dimension $\mu_\point :=\mu((I/\ia)_\point),$ 
 \begin{align*}
 \Sym_{k(\point)}(k(\point)\otimes_R I/\ia)\simeq k(\point)[Y_1,\ldots,Y_{\mu_\point}].
 \end{align*}
 It follows that
 \begin{align*}
 \Ext_{R_\point}^{i-s}(k(\point),(\Sym_R^{k}(I/\ia)\otimes_R\omega)_\point)&\simeq \Hom_{R_\point}\Big( k(\point)^{\binom{\mu_\point +s-g-k}{\mu_\point -1}},\omega_{R/J}\otimes_R R_\point/\ib R_\point\Big)\\
 &\simeq \Hom_{R_\point}\big( k(\point),\omega_{R/J}\otimes_R R_\point/\ib R_\point\big)^{\binom{\mu_\point +s-g-k}{\mu_\point -1}}\\
 &\simeq \Ext_{R_\point}^{i-s}\big( k(\point),(\omega_{R/J})_\point\big)^{\binom{\mu_\point +s-g-k}{\mu_\point -1}}.
 \end{align*}
The last isomorphism follows from the fact that  $\ib R_\point$ is regular over $(\omega_{R/J})_\point$ and annihilates $k(\point).$ Therefore
\begin{align*}
 \mu_{i-s}(\point,\Sym_R^{k}(I/\ia)\otimes_R\omega)& =
\binom{\mu_\point +s-g-k}{\mu_\point -1}\mu_{i-s}(\point,\omega_{R/J})\\
&=
\binom{\mu_\point +s-g-k}{\mu_\point -1},
\end{align*}
 since $\hei(\point)=i-s$ in $R/J,\ \mu_{i-s}(\point,\omega_{R/J})=1,$ by \cite[Theorem~3.3.10]{BH98}.
\end{proof}

\section*{Acknowledgments}
The authors would like to thank Seyed Hamid Hassanzadeh for useful comments on a first version that leaded to important improvements. A part of this work was done while the second named author was visiting the Universit\'e Pierre et Marie Curie  and he expresses his gratitude for this hospitality. All authors are partially supported by the Math-AmSud program SYRAM that gave them the opportunity to work together on this question.

% ================================== BIBLIOGRAPHIE =================


\begin{thebibliography}{123}
\bibitem[1]{AN72}
M.~Artin and M.~Nagata.
\newblock  {\em Residual intersection in Cohen-Macaulay rings.}
\newblock  J. Math. Kyoto Univ., 12: 307--323, 1972. 
	
	
\bibitem[2]{BH98}
W.~Bruns and J.~Herzog.
\newblock {\em Cohen-Macaulay rings,}
\newblock volume 39 of Cambridge Studies in Advanced Mathematics.
\newblock Revised version, Cambridge University Press, Cambridge, 1998.
	
	
\bibitem[3]{Bou70}
N.~Bourbaki.
\newblock {\em \' El\'ements de Math\'ematique. Alg\`ebre. Chapitres 1 \`a 3 (French).}
\newblock  Hermann, Paris, 1970.
	
\bibitem[4]{CEU01}
M.~Chardin and D.~Eisenbud and B.~Ulrich.
\newblock {\em Hilbert functions, residual intersections, and residually $\Ser_2$ ideals}
\newblock Compositio Math., 125(2): 193–-219, 2001. 
	
\bibitem[5]{Chardin04}
M.~Chardin.
\newblock {\em Regularity of ideals and their powers.}
\newblock Pr{\'e}publication, Insitut de Math{\'e}matiques de Jussieu, 364:1--30,2004.
	
\bibitem[6]{EU16}
D.~Eisenbud and B.~Ulrich.
\newblock  {\em Duality and Socle Generators for Residual Intersections.}
\newblock  ArXiv e-prints, July 2016.
	
\bibitem[7]{Macaulay2}
D.~R. Grayson and M.~E.~Stillman.
\newblock Macaulay 2, a software system for research in algebraic geometry.
\newblock Available at http://www.math.uiuc.edu/Macaulay2/.
	
\bibitem[8]{Ha12}
S.~H.~Hassanzadeh.
\newblock  {\em Cohen-Macaulay residual intersections and their Castelnuovo-Mumford regularity.}
\newblock Trans. Amer. Math. Soc., 364(12): 6371--6394, 2012. 
	
\bibitem[9]{Herzog74}
J.~Herzog.
\newblock {\em Komplexe Aufl{\"o}sungen und Dualit{\"a}t in der lokalen Algebra.}
\newblock Universit{\"a}t Regensburg, Habilitationsschrift, 1974.
	
	
\bibitem[10]{HK71}
J.~Herzog and E.~Kunz.
\newblock {\em Der kanonische {M}odul eines {C}ohen-{M}acaulay-{R}ings.}
\newblock Lecture Notes in Mathematics, Vol. 238, Springer-Verlag, Berlin-New York,1971.
\newblock Seminar {\"u}ber die lokale Kohomologietheorie von Grothendieck, Universit{\"a}t Regensburg, Wintersemester 1970/1971.
	
\bibitem[11]{HN16}
S.~H.~Hassanzadeh and J.~Na\'eliton.
\newblock  {\em Residual intersections and the annihilator of Koszul homologies.}
\newblock  Algebra and Number Theory, 10(4): 737--770,2016.
	
	
	
\bibitem[12]{HSV82}
J.~Herzog and A.~Simis and W.~V.~Vasconcelos.
\newblock  {\em Approximation complexes of blowing-up rings.}
\newblock J. Algebra, 74(2):466--493, 1982.
	
\bibitem[13]{HSV1983}
J.~Herzog and A.~Simis and W.~V.~Vasconcelos.
\newblock  {\em Approximation complexes of blowing-up rings. II.}
\newblock J. Algebra, 82(1):53--83, 1983.
	
\bibitem[14]{HSV83}
J.~Herzog and A.~Simis and W.~V.~Vasconcelos.
\newblock  {\em Koszul homology and blowing-up rings.}
\newblock In Commutative algebra (Trento, 1981), volume 84 of Lecture Notes in Pure and Applied Mathematics, page 79--169. Dekker, New York, 1983. 
	
\bibitem[15]{HU88}
C.~Huneke and B.~Ulrich.
\newblock  {\em Residual intersection.}
\newblock  J. Reine Angew. Math., 390: 1--20, 1988. 
	
\bibitem[16]{Hu83}
C.~Huneke.
\newblock  {\em Strongly Cohen-Macaulay schemes and residual intersections.}
\newblock  Trans. Amer.  Math. Soc., 277(2): 739--763, 1983. 
	
\bibitem[17]{HVV85}
J.~Herzog and  W.~V.~Vasconcelos and R.~Villarreal.
\newblock  {\em Ideals with sliding depth.}
\newblock Nagoya Math.~J., 99:159–-172, 1985.  
	
\bibitem[18]{MRS14}
C.~Miller and H.~Rahmati and J.~Striuli.
\newblock {\em Duality for {K}oszul homology over {G}orenstein rings.}
\newblock Math. Z., 276(1-2):329--343, 2014.
	
\bibitem[19]{PS74}
C.~Peskine and L.~Szpiro.
\newblock 	{\em Liaison des vari\'et\'es alg\'ebriques.}
\newblock	Invent. Math., 26: 271–-302, 1974.
	
\bibitem[20]{St80}
R.~P.~Stanley.
\newblock 	{\em Weyl groups, the hard Lefschetz theorem and the Sperner property.}
\newblock	SIAM J. Algebraic Discrete Methods, 1(2): 168–-184, 1980. 
	
\bibitem[21]{SV81}
A.~Simis and W.~V.~Vasconcelos.
\newblock {\em The syzygies of the conormal module.}
\newblock Amer. J. Math., 103(2):203--224, 1981.
	
\bibitem[22]{Ul92}
B.~Ulrich.
\newblock{\em Remarks on residual intersections.}
\newblock In Free resolutions in commutative algebra and algebraic geometry (Sundance, UT, 1990),  volume 2 of  Res. Notes Math., page 133–-138.
\newblock Rones and Bartlett, Boston, MA, 1992.
	
\bibitem[23]{Ul94}
B.~Ulrich.
\newblock{\em Artin-Nagata properties and reductions of ideals.}
\newblock  In  Commutative algebra: syzygies, multiplicities, and birational algebra (South Hadley, MA, 1992), volume 159 of Contemp. Math., pages 373--400.
\newblock Amer. Math. Soc., Providence, RI, 1994.
	
\bibitem[24]{Va94}
W.~V.~Vasconcelos.
\newblock {\em Arithmetic of blowup algebras,}
\newblock volume 195 of Lon Mathematical Society Lecture Note Series.
\newblock Cambridge University Press, Cambridge, 1994.
	
\bibitem[25]{Wei94}
C.~A.~Weibel.
\newblock {\em An introduction to homological algebra,}
\newblock vomume 38 of Cambridge Studies in Advanced Mathematics.
\newblock Cambridge University Press, 1994.
\end{thebibliography}
\end{document}